\newcommand{\etal}{et.~al.\xspace}
\newcommand{\cR}{\calR} 
\begin{document}
\title{Ellipsoid Fitting Up to a Constant}

 \author{Jun-Ting Hsieh\thanks{Carnegie Mellon University.  \texttt{juntingh@cs.cmu.edu}.  Supported by NSF CAREER Award \#2047933.} \and
 Pravesh K. Kothari\thanks{Carnegie Mellon University, \texttt{praveshk@cs.cmu.edu}. Supported by  NSF CAREER Award \#2047933, Alfred P. Sloan Fellowship and a Google Research Scholar Award.}
 \and Aaron Potechin\thanks{The University of Chicago, \texttt{potechin@uchicago.edu}. Supported in part by NSF grant CCF-2008920. }
 \and
 Jeff Xu\thanks{Carnegie Mellon University, \texttt{jeffxusichao@cmu.edu}. Supported in part by NSF CAREER Award \#2047933, and a Cylab Presidential Fellowship.}}

\maketitle

\begin{abstract}
In \cite{Sau11,SPW13}, Saunderson, Parrilo and Willsky asked the following elegant geometric question: what is the largest $m= m(d)$ such that there is an ellipsoid in $\R^d$ that passes through $v_1, v_2, \ldots, v_m$ with high probability when the $v_i$s are chosen independently from the standard Gaussian distribution $N(0,I_{d})$. The existence of such an ellipsoid is equivalent to the existence of a positive semidefinite matrix $X$ such that $v_i^{\top}X v_i =1$ for every $1 \leq i \leq m$ --- a natural example of a \emph{random} semidefinite program. SPW conjectured that $m= (1-o(1)) d^2/4$ with high probability. Very recently, Potechin, Turner, Venkat and Wein~\cite{PTVW22} and Kane and Diakonikolas~\cite{KD22} proved that $m \geq d^2/\log^{O(1)}(d)$ via certain explicit constructions.

In this work, we give a substantially tighter analysis of their construction to prove that $m \geq d^2/C$ for an absolute constant $C>0$. This resolves one direction of the SPW conjecture up to a constant. Our analysis proceeds via the method of \emph{Graphical Matrix Decomposition} that has  recently been used to analyze correlated random matrices arising in various areas~\cite{BHK+19}. Our key new technical tool is a refined method to prove singular value upper bounds on certain correlated random matrices that are tight up to absolute dimension-independent constants. In contrast, all previous methods that analyze such matrices lose logarithmic factors in the dimension.

\end{abstract}

\thispagestyle{empty}
\setcounter{page}{0}
\thispagestyle{empty}
\newpage
\setcounter{page}{0}
\tableofcontents
\thispagestyle{empty}
\newpage

\section{Introduction}

What's the largest $m$ so that for $m$ points $v_1, v_2, \ldots, v_m \in \R^d$ sampled independently from the $d$-dimensional standard Gaussian distribution $\calN(0,I_d)$, there exists an ellipsoid that passes through each of the $v_i$s with high probability? This latter condition is equivalent to asking for a positive semidefinite matrix $\Lambda$ such that $v_i^{\top} \Lambda v_i = 1$ for $1 \leq i \leq m$ and thus, equivalently, the question asks for the largest $m$ such that the basic stochastic semidefinite program above remains feasible with high probability.  

It is not hard to prove that for any $m\leq d+1$, an ellipsoid as above exists with high probability over the $v_i$s~\cite{SCPW12}. On the other hand, since the dimension of the smallest linear subspace that contains the positive semidefinite cone of $d \times d$ matrices is ${{d+1} \choose 2}\sim d^2/2$, it is easy to prove that for $m \gg d^2/2$, there cannot be an ellipsoid passing through $v_i$s with high probability. 

In 2013, Saunderson, Parrilo and Willsky~\cite{SPW13} studied this basic geometric question and conjectured that there is a sharp phase transition for the problem (from feasibility/existence of an ellipsoid to non-existence of an ellipsoid) as $m$ crosses $d^2/4$. 
\begin{conjecture}[SCPW conjecture]
    Let $\eps>0$ be a constant and $v_1,\dots, v_m \sim \calN(0,I_d)$ be i.i.d.\ standard Gaussian vectors in $\R^d$. Then,
    \begin{enumerate}
        \item If $m \leq (1-\eps) \frac{d^2}{4}$, then $v_1,\dots, v_m$ have the ellipsoid fitting property with probability $1-o_d(1)$.
        \item If $m \geq (1+\eps) \frac{d^2}{4}$, then $v_1,\dots, v_m$ have the ellipsoid fitting property with probability $o_d(1)$.
    \end{enumerate}
\end{conjecture}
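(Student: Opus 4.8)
The plan is to treat the two parts as the two sides of a single random threshold and to reuse the paper's Graphical Matrix Decomposition toolkit for both, but pushed from ``tight up to a constant'' to ``tight up to the right constant.'' By the conic theorem of alternatives, the ellipsoid-fitting SDP is infeasible exactly when there is a dual certificate $y \in \R^m$ with $\sum_i y_i v_iv_i^\top \succeq 0$ and $\sum_i y_i < 0$; so there is really a single quantity $m^\star(d)$, the feasibility threshold, and the conjecture asserts $m^\star(d) = (1+o(1))\,d^2/4$. Part~1 amounts to the lower bound $m^\star \ge (1-\eps)d^2/4$ --- exhibit a PSD primal solution --- and Part~2 to the upper bound $m^\star \le (1+\eps)d^2/4$ --- exhibit the dual certificate. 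Both reductions land on the same kind of statement: locating the edge of the spectrum of an explicit sum of correlated rank-one matrices to within a dimension-independent constant.

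For Part~1 I would use the ``identity-perturbation'' (least-squares) construction of \cite{PTVW22,KD22} refined here. Writing the candidate as $X = \beta I + \Delta$, the equations $v_i^\top X v_i = 1$ become $\langle v_iv_i^\top, \Delta\rangle = 1-\beta\|v_i\|^2 =: r_i$, and I take $\Delta$ to be the minimum-Frobenius-norm solution, $\Delta = \sum_i c_i v_iv_i^\top$ with $c = G^{-1}r$ and $G_{ij} = \langle v_i, v_j\rangle^2$; choosing $\beta = \tfrac1d(1-O(1/d))$ kills $\mathbb{E}[\Delta]$ to leading order, so that it suffices to prove $\|\Delta\|_{\mathrm{op}} \le (1-\Omega(1))\beta \approx 1/d$. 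The key step is to expand $G^{-1}$ (whose leading behaviour is $G \approx d^2 I + d\,\mathbf{1}\mathbf{1}^\top + (\text{fluctuations})$) and then expand $\Delta$ into a sum over graphical shapes on the ``vector'' vertices $i\in[m]$ and the ``coordinate'' vertices in $[d]$, applying the paper's log-free norm bounds to each shape. A subtlety that must be handled: the off-diagonal part of $G^{-1}$ and its higher-order fluctuation terms contribute to $\|\Delta\|_{\mathrm{op}}$ at the \emph{same} order as the naive leading term $\tfrac1{d^2}\sum_i r_i v_iv_i^\top$, so none can be discarded. The upshot should be $\|\Delta\|_{\mathrm{op}} = (1+o(1))\,c_\star\sqrt{m}/d^2$ for an explicit $c_\star$, and the content of the conjectured threshold is that $c_\star = 2$, so that $\|\Delta\|_{\mathrm{op}} < 1/d$ precisely when $m < (1-\eps)d^2/4$.

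For Part~2 I would build the dual certificate by a perturbative ansatz with $y_i$ an explicit low-degree function of local statistics, e.g.\ $y_i = a + b\|v_i\|^2$ (equivalently, the minimum-norm $y$ for which $Y := \sum_i y_iv_iv_i^\top$ equals a prescribed small matrix inside $\mathrm{span}\{v_jv_j^\top\}$), tuned so that $\sum_i y_i$ has just crossed below $0$. The two things to verify are (a) $Y \succeq 0$ --- not merely $Y$ close to PSD, a spectral \emph{lower}-tail statement about the graphical expansion of $Y$, which is exactly where the fluctuations of $VV^\top$ and $\sum_i\|v_i\|^2 v_iv_i^\top$ around their means must be controlled with the right constants --- and (b) $\sum_i y_i < 0$, which comes out of the same fluctuation bookkeeping: the ``room'' that lets one keep $Y \succeq 0$ while $\sum_i y_i$ turns negative opens up exactly when $m$ exceeds the critical value. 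Dualizing back, Parts~1 and~2 then sandwich $m^\star(d)$; the conjecture is that the two sandwich bounds coincide, which forces the constants on the two sides to match.

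I expect the main obstacle, in both parts, to be precisely the passage from ``tight up to a constant'' (what this paper achieves, giving $m \ge d^2/C$) to the \emph{exact} constant. Matrix-concentration arguments lose $\sqrt{\log d}$; the Graphical Matrix Decomposition as developed here removes the logarithm but still carries constants from enumerating shapes and from the charging arguments in the norm bounds. Pinning down $d^2/4$ requires identifying the single dominant shape (or dominant finite family) whose contributions add coherently at the spectral edge, computing the limiting spectral law of the fully expanded correction matrix --- I would expect a deformed Marchenko--Pastur / semicircle law whose edge is the constant $c_\star$ above --- and then proving that every remaining shape is lower order with \emph{summable} constants so that the whole tail is negligible. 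That edge-of-the-spectrum calculation --- the constant in front of $\sqrt{m}/d^2$, not just its order --- is the crux, and is exactly the step the present work leaves for the future by settling for an absolute $C$.
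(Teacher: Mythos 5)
There is a fundamental mismatch here: the statement you were asked about is the SCPW \emph{conjecture}, which this paper does not prove and does not claim to prove. The paper's actual result (its Theorem 1.2) establishes only one direction up to an unspecified absolute constant, $m \le c\,d^2$, and the authors explicitly remark that numerical experiments suggest the identity-perturbation construction they analyze \emph{cannot} approach the constant $1/4$, so that ``establishing the sharp constant in the SCPW conjecture will likely need new ideas.'' Your proposal is therefore not a proof of the statement but a research program, and its load-bearing steps are exactly the ones left open: you assert, but do not establish, that the fully expanded correction matrix $\Delta$ has spectral edge $(1+o(1))\,c_\star\sqrt{m}/d^2$ with $c_\star = 2$, and that for Part 2 a dual certificate $y$ with $\sum_i y_i v_i v_i^\top \succeq 0$ and $\sum_i y_i < 0$ exists as soon as $m \ge (1+\eps)d^2/4$. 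Identifying the dominant family of shapes at the spectral edge, proving a limiting-law/edge statement rather than an upper bound, and controlling the lower tail needed for $Y \succeq 0$ are precisely the missing ingredients; your own closing paragraph concedes this, so nothing in the proposal actually closes the gap between ``$d^2/C$'' and ``$d^2/4$.''

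Beyond being incomplete, the Part 1 plan as stated would likely fail even if executed perfectly: you propose to certify feasibility up to $(1-\eps)d^2/4$ using the same identity-perturbation/least-squares candidate ($X = \beta I + \sum_i c_i v_i v_i^\top$ with $c = G^{-1}r$) that this paper and its predecessors analyze, but the evidence cited in the paper indicates this particular candidate stops being PSD strictly before $m = d^2/4$. So sharpening the graph-matrix norm bounds to exact constants for \emph{this} construction would pin down its own threshold, not the conjectured one; matching $d^2/4$ would require a different (or corrected) construction, e.g.\ one obtained from pseudo-calibration, which the paper flags as an open direction. The duality framing of Part 2 is fine as a certificate of infeasibility (modulo the routine constraint qualification needed for the exact alternative), but the ansatz $y_i = a + b\|v_i\|^2$ is again only a guess whose PSD-ness near the threshold is exactly the kind of sharp spectral statement no current technique — including the ones in this paper — is shown to deliver.
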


This bound is $1/2$ of the dimension of the smallest linear subspace containing the positive semidefinite cone of $d \times d$ matrices. Said differently, the SCPW conjecture (developed in a sequence of works~\cite{Sau11,SCPW12,SPW13}) posits that the positive semidefiniteness constraint ``forces" a drop of a factor $2$ in the threshold $m$ for infeasibility. The SCPW conjecture was motivated by results of numerical experiments (see also the experiments presented in \cite{PTVW22}). 

Early on, \cite{Sau11,SPW13} established the existence of a feasible ellipsoid for any $m \leq O(d^{6/5-\eps})$ whp. Recently, there has been a new wave of progress on this bound.  A recent result ~\cite{GJJ20} on establishing Sum-of-Squares lower bounds for the Sherringtin-Kirkpatrick model, as a corollary yields an estimate of $m \leq O(d^{3/2-\eps})$. In fact, though not explicitly stated, their work already contains ideas that imply a significantly stronger bound of $m \leq d^{2}/\polylog(d)$. Very recently, two independent works \cite{PTVW22} and \cite{KD22} analyzed two slightly different explicit constructions for $\Lambda$ to recover a similar bound of $m \leq d^{2}/\polylog(d)$. In their works~\cite{PTVW22,KD22}, the authors ask the question of analyzing their construction (or a different one) to obtain an improved and almost optimal estimate of $m = d^2/C$ for some absolute constant $C>0$.

The main result of this work achieves this goal. Specifically, we prove:



\begin{theorem}[Main result] \label{thm:main-thm}
    For $m \leq c d^2$ for some universal constant $c>0$ and $v_1,\dots,v_m \sim \calN(0, I_d)$ drawn independently, with probability at least $1-o_d(1)$, there exists an ellipsoid passing through each $v_i$.
\end{theorem}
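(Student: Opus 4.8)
\noindent\emph{Proof plan.} The strategy is to instantiate the least-squares construction of \cite{PTVW22,KD22} and then bound the relevant spectral norms with \emph{absolute} constants via graphical matrix decomposition, in place of the moment-method bounds that cost $\operatorname{polylog}(d)$ factors.

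\noindent\emph{Step 1: the candidate $X$ and reduction to a spectral-norm bound.} Let $\Gamma\in\R^{m\times m}$ be the Gram matrix of the rank-one lifts, $\Gamma_{ij}=\langle v_iv_i^\top, v_jv_j^\top\rangle=\langle v_i,v_j\rangle^2$, and let $\eta\in\R^m$ with $\eta_i=1-\tfrac1d\|v_i\|^2$. On the event $\Gamma\succ 0$, define
\[
    X \;=\; \tfrac1d I \,+\, M,\qquad M \;=\; \mathcal{A}^*(\Gamma^{-1}\eta)\;=\;\sum_{i=1}^m(\Gamma^{-1}\eta)_i\,v_iv_i^\top,\qquad \mathcal{A}^*(y):=\sum_i y_i\,v_iv_i^\top .
\]
Then $v_i^\top X v_i = \tfrac1d\|v_i\|^2 + (\Gamma\,\Gamma^{-1}\eta)_i = \tfrac1d\|v_i\|^2+\eta_i = 1$ for every $i$, so $X$ satisfies all ellipsoid constraints; since $-\lambda_{\min}(M)\le\|M\|_{\mathrm{op}}$, it suffices to prove, with probability $1-o_d(1)$, that (i) $\Gamma\succeq\tfrac12 d^2 I$ (so $\Gamma\succ 0$ and $M$ is well defined) and (ii) $\|M\|_{\mathrm{op}}<\tfrac1d$; together these give $X\succ 0$. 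Both will hold once $m\le cd^2$ for a sufficiently small absolute constant $c>0$.

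\noindent\emph{Step 2: Neumann expansion into graphical matrices.} One computes $\mathbb{E}\,\Gamma=(d^2+d)I + d\,\mathbf{1}\mathbf{1}^\top\succeq d^2 I$, so (i) reduces to $\|\Gamma-\mathbb{E}\,\Gamma\|_{\mathrm{op}}\le\tfrac12 d^2$. For (ii), observe that $\mathbb{E}\,\Gamma$ has its only large isolated eigenvalue (of order $dm$) along $\mathbf{1}$, while $\eta$ has only an $O(1/\sqrt d)$ projection onto $\mathbf{1}$; hence after splitting off this rank-one direction (equivalently, expanding around $\mathbb{E}\,\Gamma$, which is inverted exactly by Sherman--Morrison), the Neumann series
\[
    M \;=\; \mathcal{A}^*\,\Gamma^{-1}\eta \;=\; \sum_{t\ge 0}(-1)^t\,\mathcal{A}^*\,(\mathbb{E}\,\Gamma)^{-1}\big[(\Gamma-\mathbb{E}\,\Gamma)(\mathbb{E}\,\Gamma)^{-1}\big]^t\eta
\]
converges, each factor $\Gamma-\mathbb{E}\,\Gamma$ contributing a spectral ratio at most $\tfrac12$. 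Substituting $v_iv_i^\top = I + (v_iv_i^\top - I)$ and $\langle v_i,v_j\rangle^2=\sum_{a,b}v_i(a)v_i(b)v_j(a)v_j(b)$ throughout rewrites every term of this series --- and $\Gamma-\mathbb{E}\,\Gamma$ itself --- as a sum over combinatorial \emph{shapes} $\alpha$, whose size grows with $t$, of explicit scalar coefficients times \emph{graphical matrices} $M_\alpha\in\R^{d\times d}$, exactly the objects handled by the graphical matrix decomposition framework of \cite{BHK+19}. The task becomes: bound $\|M_\alpha\|_{\mathrm{op}}$ for each shape and sum.

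\noindent\emph{Step 3: tight, log-free spectral norm bounds --- the crux.} The heart of the proof is a bound on $\|M_\alpha\|_{\mathrm{op}}$ that carries the correct power of $d$ together with only a \emph{dimension-independent} constant per shape, combined with the count that the number of shapes of a given size is at most $C^{|\alpha|}$. The standard route --- estimating $\mathbb{E}\operatorname{tr}(M_\alpha^{2k})$ and optimizing over $k$ --- forces $k\approx\log d$ and hence leaks $\operatorname{polylog}(d)$ factors; already for the single shape $\mathcal{A}^*\eta$, matrix Bernstein gives only $\|\mathcal{A}^*\eta\|_{\mathrm{op}}=O(\sqrt{m\log d})$, a $\sqrt{\log d}$ loss, and this is precisely the barrier behind the $m=d^2/\operatorname{polylog}(d)$ bounds of \cite{PTVW22,KD22}. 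Instead I will establish the ``square-root-of-the-expected-square'' estimate for each $M_\alpha$ with only a constant multiplicative loss per vertex and per edge of the shape --- morally via a recursive conditioning that separates the genuinely heavy-tailed contributions of the $v_i$'s from the well-concentrated bulk, so that no union bound over the $d$ coordinate directions is ever paid. I expect essentially the entire difficulty of the theorem to reside in this step.

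\noindent\emph{Step 4: assembling.} Granting Step 3, summing over all shapes of size $t$ produces, for each of $\|\Gamma-\mathbb{E}\,\Gamma\|_{\mathrm{op}}$ and $d\cdot\|M\|_{\mathrm{op}}$, a geometric series in $C\sqrt{c}$; this yields $\|\Gamma-\mathbb{E}\,\Gamma\|_{\mathrm{op}}=O(\sqrt c)\,d^2$ and $\|M\|_{\mathrm{op}}=O(\sqrt c)/d$, the deterministic shift $\mathbb{E}\,M=-\Theta(c/d)\,I$ being absorbed into the latter bound. Choosing $c$ below an absolute constant makes both (i) and (ii) hold with probability $1-o_d(1)$; then $X=\tfrac1d I + M\succ 0$ and passes through all of $v_1,\dots,v_m$, which proves Theorem~\ref{thm:main-thm}.
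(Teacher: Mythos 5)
Your skeleton (the identity-perturbation candidate $X=\tfrac1d I+\mathcal{A}^*(\Gamma^{-1}\eta)$, reduction to invertibility of $\Gamma$ plus a spectral bound on the perturbation, and a graph-matrix decomposition of the resulting series) matches the paper's high-level plan, but the two load-bearing steps are not actually carried out, and one of them is set up incorrectly. The decisive gap is Step 3: the entire content of the theorem is the claim that each graphical matrix can be bounded with a dimension-independent constant per vertex/edge, and you only assert this ``morally via a recursive conditioning'' with no mechanism, no statement of the estimate, and an explicit admission that all the difficulty lives there. In the paper this step is the bulk of the work: a trace-moment argument made log-free by a \emph{local} block-value analysis with step-labelings $F/S/H/R$, a redistribution of vertex factors between first and last appearances, an explicit edge-factor scheme for the Hermite characters, and a potential-function ($\pur$) accounting of unforced returns, culminating in the bounds for $M_\alpha$, $M_\beta$, $M_D$ and for the dangling shapes of $\calR$. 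A proposal that defers exactly this estimate has not proved \Cref{thm:main-thm}.

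There is also a concrete error in Step 2. Expanding $\Gamma^{-1}$ around $\E\,\Gamma=(d^2+d)I+d\,\mathbf{1}\mathbf{1}^{\top}$ does not give a convergent Neumann series at $m=\Theta(d^2)$: writing $\tilde\eta_i=\|v_i\|^2-d$, the fluctuation $\Gamma-\E\,\Gamma$ contains (up to a negligible diagonal correction) the random rank-two piece $\mathbf{1}\tilde\eta^{\top}+\tilde\eta\mathbf{1}^{\top}$, and since $\|\tilde\eta\|_2=\Theta(\sqrt{dm})$ this piece has operator norm $\Theta(\sqrt{m}\cdot\sqrt{dm})=\Theta(m\sqrt{d})$, which is $\Theta(d^{5/2})\gg d^2$ for $m=\Theta(d^2)$ no matter how small the constant $c$ is. Hence neither $\|\Gamma-\E\,\Gamma\|_{\mathrm{op}}\le\tfrac12 d^2$ nor the claimed ``spectral ratio at most $\tfrac12$'' holds; indeed $\|(\Gamma-\E\,\Gamma)(\E\,\Gamma)^{-1}\|_{\mathrm{op}}=\Theta(m/d^{3/2})\to\infty$, so the series in Step 2 and the geometric assembly in Step 4 break down. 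Your Sherman--Morrison step removes only the deterministic $d\,\mathbf{1}\mathbf{1}^{\top}$ direction, not the random $\eta$-cross terms. This is exactly why the paper does not expand around the mean: it splits $M=A+B$ with $B=\tfrac1d J_m+\tfrac1d\bigl(1_m\eta^{\top}+\eta 1_m^{\top}\bigr)$ of rank two, inverts that part \emph{exactly} via the Woodbury identity (controlling the $2\times2$ scalars $r,s,u$), and applies the Neumann series only to $A^{-1}$, whose fluctuation $M_\alpha+M_\beta+M_D$ genuinely has small operator norm. To repair your argument you would need to peel off this rank-two direction before expanding (recovering the paper's route), in addition to supplying the missing norm-bound machinery of Step 3.
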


\begin{remark}
    We note that the failure probability is in fact $2^{-d^{\eps}}$ for some small constant $\eps$ due to the nature of our proof for matrix norm bounds.
\end{remark}

We establish \Cref{thm:main-thm} by analyzing the construction of Kane and Diakonikolas~\cite{KD22} (which is a variant of the construction proposed in~\cite{PTVW22}). Our argument can be used to recover a bound of $c \sim 1/10^8$. We have not tried to optimize this bound. Numerical experiments suggest that the \cite{KD22} construction we analyze cannot approach $c=1/4$, so establishing the sharp constant in the SCPW conjecture will likely need new ideas. \Cref{tab:prior-work} shows a summary of our result compared to prior work.

Our key idea departs from the analysis technique of~\cite{KD22} and instead relies on the method of \emph{graphical matrix decomposition}. This method decomposes a random matrix with correlated entries into a sum of structured random matrices called \emph{graph matrices}. Graph matrices can be thought of as an analog of the \emph{Fourier basis} in the analysis of functions over product spaces. This method was first employed in the works establishing tight sum-of-squares lower bound on the planted clique problem~\cite{BHK+19,HKP19,AMP16,JPR+22} and has since then been employed in several follow-up works on proving sum-of-squares lower bounds and more recently in analyzing well-conditionedness of linear algebraic algorithms for generalizations of tensor decomposition~\cite{BHKX22}).

The key technical work in the analysis then becomes understanding the smallest and the largest singular values of graph matrices. All prior works rely on arguments that establish bounds on the largest singular values that are accurate up to polylogarithmic factors in the underlying dimension of the matrices. The work of \cite{BHKX22} recently showed how to use such bounds to also obtain estimates of the smallest singular values of graph matrices (which, otherwise are significantly more challenging to prove). Our analysis builds on their conceptual framework but with significant technical upgrades. This is because the quantitative bounds proved in \cite{BHKX22} do not allow us to directly obtain an improvement on the previous estimates~\cite{KD22}. 

Our main technical contribution is a new method to establish bounds on the largest singular values of graph matrices that are tight up to dimension-independent absolute constants. This allows us to obtain substantially improved estimates for the SCPW conjecture. Given the host of previous applications of such bounds, we expect that our results will have many more applications down the line. 

\paragraph{Concurrent Work} We note that a concurrent work of Bandeira \etal \cite{Bandeira2023FittingAE} also obtains a sharper analysis of \cite{KD22} to establish a similar result as this work.  They analyze the same construction of identity perturbation as us. In their work, \cite{Bandeira2023FittingAE} ask the question of obtaining estimates that hold with inverse exponential failure probability (as opposed to inverse polynomial failure probability that their work establishes). They also outline a proof strategy that could potentially achieve this goal. We note that our proof does indeed recover an inverse exponential failure probability naturally.


\begin{table}[ht]
\begin{center}
\begin{tabular}{lc}
\hline
 {\textbf{Construction}} &
 { \textbf{Bound on \emph{m}}} 
 \\ \hhline{==}
 { Conjectured}  & {$d^{2}/4$}      \\
 { \cite{Sau11,SPW13}}    & {$O(d^{6/5-\eps})$}   \\
 { \cite{GJJ20}}    & {$O(d^{3/2-\eps})$ ${}^{\ast}$}  \\
 { \cite{PTVW22}}  & {$O(d^2/\polylog(d))$}  \\
 { \cite{KD22}} & {$O(d^2/\log^4(d))$}   \\  \hline
 { \textbf{this paper}} & {$O(d^2)$}  \\ \hline
\end{tabular}
\end{center}
\centering \footnotesize{${}^{\ast}$The bound $O(d^2/\polylog(d))$ is implicit in their work.}
\caption{Comparison of our result with prior work.}
\label{tab:prior-work}
\end{table}

\subsection{Technical overview}

Following the convention of \cite{KD22}, for the rest of the paper we will assume that $v_1,\dots,v_m \sim \calN(0, \frac{1}{d} I_d)$ such that each vector has expected norm $1$.
Note that this does not change the problem as we can simply scale $\Lambda$.

Our construction of $\Lambda$ is the ``identity perturbation construction'', which is the same one analyzed in \cite{KD22} and was proposed in \cite{PTVW22}.
As an intuition, observe that $\Lambda = I_d$ almost works: $v_i^T I_d v_i = \|v_i\|_2^2 \approx 1$.
Thus, the idea is to define $\Lambda$ as a perturbation of $I_d$: $\Lambda = I_d - \sum_{i=1}^m w_i v_i v_i^T$, where $w = (w_1,\dots,w_m) \in \R^m$.
To determine $w$, observe that the constraints $v_i^T \Lambda v_i = 1$ give $m$ linear constraints on $w$, and this can be written as a linear system represented by a matrix $M \in \R^{m\times m}$ with entries $M[i,j] = \iprod{v_i, v_j}^2$.
Thus, given that $M$ is full rank, $w$ is uniquely determined by $w = M^{-1} \eta$ for some vector $\eta$ (see \Cref{eq:eta-vector}).
This construction satisfies $v_i^T \Lambda v_i = 1$ automatically, so the next thing is to prove that $\Lambda \succeq 0$.
Therefore, we have two high-level goals:
\begin{enumerate}
    \item Prove that $M$ is full rank and analyze $M^{-1}$.
    \item Prove that $\calR \coloneqq \sum_{i=1}^n w_i v_i v_i^T$ has spectral norm bounded by $1$.
\end{enumerate}
These two immediately imply that $\Lambda$ is a valid construction.

To achieve the first goal, we \emph{decompose} $M$ into several components.
Roughly, we write $M = A + B$ where $A$ is a perturbed identity matrix $A = I_m - T$ and $B$ is a rank-2 matrix (see \Cref{sec:M-decomposition}).
We first show that $\|T\|_{\op} \leq O(\frac{\sqrt{m}}{d}) < 0.5$ with $m \leq c d^2$ (\Cref{lem:T-norm}), hence $A$ is well-conditioned.
Then, using the fact that $B$ has rank 2, we can apply the Woodbury matrix identity (\Cref{fact:invertibility} and \cref{fact:woodbury}) --- a statement on the inverse of low-rank corrections of matrices --- to conclude that $M$ is invertible and obtain an expression for $M^{-1}$ (\Cref{eq:M-inverse-expanded}).
This is carried out in \Cref{sec:M-inverse}.

Next, for the second goal, we need to further expand $A^{-1}$.
Since $\|T\|_{\op} < 1$, we can apply the Neumann series and write $A^{-1} = (I_m - T)^{-1} = \sum_{k=0}^\infty T^k$.
For the analysis, we select certain thresholds to truncate this series such that the truncation error is small.
Then, we write $M^{-1}$ in terms of the truncated series plus a small error, which will be useful later for the analysis of $R$.
This is carried out in \Cref{sec:A-inverse}.

Finally, given the expression of $M^{-1}$, 
we are able to express $\calR$ using the terms that show up for $M^{-1}$, and the bulk of our work culminates in bounding the spectral norm of $\calR$ in \cref{sec:analysis-of-R}.
Bounding $\|R\|_{\op} \leq 1$ implies that $\Lambda \succeq 0$, completing the proof.

\paragraph{Requiring tight norm bounds}
Our main technical lemmas are the spectral norm bounds of $T$ (\Cref{lem:T-norm}) and the matrices in the decomposition of $R$ at  \cref{sec:analysis-of-R}.
Clearly, we need our norm bound $\|T\|_{\op} \leq O(\frac{\sqrt{m}}{d})$ to be tight without polylog factors so that $m \leq O(d^2)$ suffices, and similarly for matrices from $R$.

The standard starting point is the \emph{trace moment method}: for any symmetric matrix $M \in \R^{n\times n}$ and $q\in \N$ (usually taking $q = \polylog(n)$ suffices),
\begin{align*}
    \|M\|_{\op}^{2q} \leq \tr( M^{2q}) = \sum_{i_1,i_2,\dots,i_{2q}\in [n]} M[i_1,i_2] M[i_2,i_3] \cdots M[i_{2q}, i_1] \mper
\end{align*}
We view the summand as a closed walk $i_1 \to i_2 \to \cdots \to i_{2q} \to i_1$ on $n$ vertices.
For a random matrix, we study the expected trace $\E \tr(M^{2q})$.
In the simple case when $M$ is a Gaussian matrix (GOE), we see that after taking the expectation, the non-vanishing terms are closed walks where each edge $(u,v)$ is traversed even number of times.
This is in fact true for any $M$ as long as the odd moments are zero.
Thus, a precise upper bound on $\E \tr(M^{2q})$ can be obtained by carefully counting such closed walks (see \cite{Tao12}).

Our matrices are more complicated; each entry is a mean-zero \emph{polynomial} of Gaussian random variables.
To carry out the trace method, we represent the matrices as graphs, hence the term \emph{graph matrices}.
The framework of graph matrices was first introduced by \cite{BHK+19}, and
over the years, off-the-shelf norm bounds (e.g.~\cite{AMP16}) for graph matrices have been developed and successfully used in several works~\cite{MRX20,GJJ20,HK22,JPR+22,BHKX22}.
However, the currently known norm bounds are only tight up to polylog factors, hence not sufficient for us.
Therefore, the bulk of our paper is to prove norm bounds for these matrices that are tight up to constant factors. In fact, our bounds are even tight in the constant when the matrices are explicitly written down following the graph matrix language. That said, we do not pursue the tight constant-factor dependence in this work: we believe that an analysis of our candidate matrix following the current road-map but with norm bounds tight-to-constant would still fall short of reaching the conjectured threshold of $\frac{d^2}{4}$.

In the context of a fine-grained understanding for graph matrices,  Potechin and Cai \cite{Cai2020TheSO, Cai2022OnMD} determined the limiting distribution of the spectrum of the singular values of Z-shaped and multi-Z-shaped graph matrices. However, their results are only for these specific graph matrices, and their analysis does not technically give norm bounds as they do not rule out having a negligible proportion of larger singular values.

\paragraph{Key idea towards tight norm bounds}
Here, we briefly discuss the high-level ideas for proving tight norm bounds.
To illustrate our techniques, in \Cref{sec:overview-norm-bounds} we will give a full proof for a matrix that arises in our analysis as an example, and also discuss key ideas that allow us to analyze more complicated matrices.

The key to counting walks is to specify an \emph{encoding}, which we view as \emph{information} required for a \emph{walker} to complete a walk.
If we can show that such an encoding uniquely identifies a walk, then we can simply bound the walks by bounding the number of possible encodings.
Thus, all we need to do is to come up with an (efficient) encoding scheme and prove that the walker is able to complete a walk.
Using standard encoding schemes, we quickly realize that the walker may be \emph{confused} during the walk, i.e., the walker does not have enough information to perform the next step.
Thus, we need to \emph{pay} for additional information in the encoding to resolve confusions.
So far, this is the same high-level strategy that was used in prior work~\cite{Tao12,AMP16,JPR+22}, and this extra pay is often the source of extra log factors in the norm bounds.

Our key innovation is to pay for the extra information during steps that require much less information than normal.
Roughly speaking, we label each step of the walk as either (1) visiting a new vertex, (2) visiting an old vertex via a new edge, (3) using an old edge but not the last time, (4) using an old edge the last time (see \Cref{def:step-labeling}).
The high level idea is that the dominating walks in the trace are the ones that use only the 1st and 4th types, while the 2nd and 3rd types require less information (which we call \emph{gaps}).
The main observation is that the walker will be confused only when there are steps of the 2nd and 3rd type involved, but we can pay extra information during these steps to resolve potential (future) confusions.
This is illustrated in \Cref{sec:resolving-confusion}.

\subsection{Comparison to prior works}

Our candidate matrix construction of $\Lambda$ is essentially same as \cite{KD22}, while we adopt different techniques to bound the spectral norm of the non-Identity component.
In particular,  they use an elegant cover (or $\eps$-net) argument which is significantly different than ours.
That said, though a major obstacle being the norm bound for the invertibility, their argument suffers an additional polylog gap from the epsilon-net argument, and this is partially why we adopt the proof strategy via graph matrix decomposition that is seemingly more complicated.

Closer to our analysis is the work of \cite{PTVW22}. They study a construction of ``least-square minimization'' proposed by \cite{Sau11}, which is equivalent to projecting out the identity mass onto the subspace of matrices satisfying the constraints. In particular, their matrix analysis proceeding via Woodbury expansion and Neumann series using graph matrices serves as a road-map for our current work. In this work, we develop a more refined understanding of the structured random matrices that we believe would be useful in further and more fine-grained investigations of problems in average-case complexity.


 In the context of Planted Affine Plane problem, \cite{GJJ20} reaches the threshold of $\widetilde{O}(d^2 )$ implicitly. They adopt the framework of pseudo-calibration \cite{BHK+19} to obtain a candidate matrix, and follow a similar recipe as ours via graph matrix decompositions and spectral analysis. 
That said, it is an interesting question whether solutions coming from a pseudo-calibration type of construction might give us some extra mileage in ultimately closing the constant gap.

\begin{remark}[A ''quieter'' planted distribution?]
  A natural idea is to analyze the planted distribution pioneered in \cite{MRX20, GJJ20}:  unfortunately, it can be easily verified that the low-degree polynomial hardness for the particular planted distribution actually falls apart even if we assume an arbitrary constant gap.
\end{remark}



\section{Proof of main result}

Given $v_1, v_2,\dots, v_m$ that are i.i.d.\ samples from $\calN(0,\frac{1}{d}I_d)$, recall that we must construct a matrix $\Lambda$ such that (1) $v_i^T \Lambda v_i = 1$ for any $i\in[m]$, and (2) $\Lambda \succeq 0$.

In this section, we describe our candidate matrix (\Cref{def:candidate-matrix}).
To prove that it satisfies the two conditions above, we need to analyze certain random matrices (and their inverses) that arise in the construction, which involves decomposing the matrices into simpler components.
We will state our key spectral norm bounds (\Cref{lem:T-norm} and \Cref{lem:R-norm-bound}) whose proofs are deferred to later sections,
and complete the proof of \Cref{thm:main-thm} in \Cref{sec:finish-main-thm}.

\subsection{Candidate construction}

The following is our candidate matrix $\Lambda$, which is the same one as \cite{KD22}.

\begin{definition}[Candidate matrix]  \label{def:candidate-matrix}
    Given $v_1,\dots,v_m \sim \calN(0, \frac{1}{d} I_d)$, we define the matrix $\Lambda \in \R^{d \times d}$ to be
    \[ 
        \Lambda \coloneqq I_d - \sum_{i=1}^m w_i v_i v_i^T
        \numberthis \label{eq:lambda-matrix}
    \]
    where we take $w = (w_1, w_2,\dots, w_m)$ to be the solution to the following linear system,
    \[ 
        M w = \eta 
    \]
    for $\eta\in \R^{m} $ given by
    \[ 
        \eta_i \coloneqq \|v_i\|_2^2-1, \quad \forall i\in [m] \mcom
        \numberthis \label{eq:eta-vector}
    \]
    and $M\in \R^{m\times m}$ with entries given by
    \[ 
        M[i,j] \coloneqq \langle v_i, v_j \rangle^2, \quad \forall i,j\in [m] \mper
        \numberthis \label{eq:M-matrix}
    \] 
\end{definition}

We first make the following simple observation.
\begin{observation}
    For any $i\in [m]$, the constraint $v_i^T \Lambda v_i = 1$ is satisfied.
\end{observation}
\begin{proof}
    For any $i\in [m]$,
    \begin{align*}
        v_i^T \Lambda v_i &=  v_i^T I_d v_i - \sum_{j\in [m]} w_j \langle v_i, v_j\rangle^2  
        = \|v_i\|_2^2 - \langle M[i], w\rangle 
        = \|v_i\|_2^2 - \eta_i =1 \mper
    \end{align*}
    Here $M[i]$ is the $i$-th row of $M$, and the equality above follows from $M w = \eta$ and $\eta_i = \|v_i\|_2^2 - 1$ from \Cref{eq:eta-vector}.
\end{proof}

\paragraph{Structure of subsequent sections}
For $\Lambda$ to be well-defined, we require that $M$ is full rank (hence invertible).
Note that it is easy to see that $M$ is positive semidefinite, since $M$ is a Gram matrix with $M[i,j] = \iprod{v_i^{\otimes 2}, v_j^{\otimes 2}}$.
To analyze $M$, we will show a decomposition of $M$ in \Cref{sec:M-decomposition} that allows us to more easily analyze its inverse.
In \Cref{sec:M-inverse}, we will prove that $M$ is in fact positive definite (\Cref{lem:M-invertible}).

Next, to prove that $\Lambda \succeq 0$,
we will write $\Lambda = I_d - \cR$ where
\begin{equation*}
    \cR \coloneqq \sum_{i=1}^m w_i v_iv_i^T = \sum_{i=1}^m \left( M^{-1} \eta \right)[i] \cdot v_iv_i^T \mcom
    \numberthis \label{eq:R-matrix}
\end{equation*}
and prove that $\|\cR\|_{\op}$ is bounded by 1.
This is done in \Cref{sec:R-decomposition}.
Finally, combining the analyses, we finish the proof of \Cref{thm:main-thm} in \Cref{sec:finish-main-thm}.


\subsection{Decomposition of \texorpdfstring{$M$}{M}}
\label{sec:M-decomposition}

The proof of \Cref{thm:main-thm} requires careful analysis of the matrix $M$ from \Cref{eq:M-matrix} and its inverse.
To this end, we first decompose $M$ as $M = A + B$ such that intuitively, $A$ is perturbation of a (scaled) identity matrix and $B$ has rank 2.
We will later see how this decomposition allows us to analyze $M^{-1}$ more conveniently.


\begin{proposition}[Decomposition of $M$]  \label{prop:M-decomposition}
    \[ 
    M = \underbrace{M_\al + M_\beta +M_D + \Paren{1+\frac{1}{d}} I_m}_{\coloneqq A}
    + \underbrace{\frac{1}{d}J_m + \frac{1}{d} \left( 1_m \cdot \eta^T + \eta\cdot 1_m^T\right) }_{\coloneqq B}
    \numberthis  \label{eq:M-decomposition}
    \]
    where $J_m$ is the all-ones matrix, $M_{\alpha}, M_{\beta} \in \R^{m\times m}$ are matrices with zeros on the diagonal and $M_D \in \R^{m\times m}$ is a diagonal matrix, defined as follows:
    \begin{itemize}
        \item $M_{\alpha}[i,j] \coloneqq \sum_{a \neq b \in [d]} v_i[a]\cdot  v_i[b] \cdot v_j[a]\cdot v_j[b]$ for $i \neq j \in [m]$,
        \item $M_{\beta} [i,j] \coloneqq \sum_{a \in [d]} \left( v_i[a]^2 -\frac{1}{d} \right)\left( v_j[a]^2  - \frac{1}{d} \right)$ for $i \neq j \in [m]$,
        \item $M_D[i,i] \coloneqq \|v_i\|_2^4 - \frac{2}{d} \|v_i\|_2^2 - 1$ for $i\in[m]$.
    \end{itemize}
\end{proposition}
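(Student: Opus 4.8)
The plan is to verify \Cref{eq:M-decomposition} by a direct entrywise computation, handling the off-diagonal and diagonal entries separately. Fix $i,j\in[m]$ and expand
\[
M[i,j] \;=\; \langle v_i, v_j\rangle^2 \;=\; \Bigl(\sum_{a\in[d]} v_i[a]\,v_j[a]\Bigr)^{2} \;=\; \sum_{a,b\in[d]} v_i[a]\,v_i[b]\,v_j[a]\,v_j[b],
\]
and split the double sum into the part $a=b$ and the part $a\neq b$. The $a\neq b$ part is $\sum_{a\neq b} v_i[a]v_i[b]v_j[a]v_j[b]$, which for $i\neq j$ is exactly $M_{\alpha}[i,j]$, and which for $i=j$ equals $\|v_i\|_2^4 - \sum_a v_i[a]^4$ --- a purely diagonal contribution we set aside for $M_D$.

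Next, for the $a=b$ part I would center each square about its mean $1/d$: writing
\[
v_i[a]^2 v_j[a]^2 \;=\; \Bigl(v_i[a]^2 - \tfrac1d\Bigr)\Bigl(v_j[a]^2 - \tfrac1d\Bigr) + \tfrac1d v_i[a]^2 + \tfrac1d v_j[a]^2 - \tfrac1{d^2}
\]
and summing over $a\in[d]$ gives, for $i\neq j$, $\sum_a v_i[a]^2 v_j[a]^2 = M_{\beta}[i,j] + \tfrac1d\|v_i\|_2^2 + \tfrac1d\|v_j\|_2^2 - \tfrac1d$ (the $i=j$ case, including the centered squares $\sum_a(v_i[a]^2-\tfrac1d)^2$, is again folded into $M_D$). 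Substituting $\|v_i\|_2^2 = \eta_i + 1$ from \Cref{eq:eta-vector} rewrites the trailing terms as $\tfrac1d(\eta_i+\eta_j) + \tfrac1d$, which is precisely the $(i,j)$ entry of $B = \tfrac1d J_m + \tfrac1d(1_m\eta^T + \eta 1_m^T)$. Combining the two parts, for every $i\neq j$ we obtain $M[i,j] = M_{\alpha}[i,j] + M_{\beta}[i,j] + B[i,j]$, which matches \Cref{eq:M-decomposition} since $M_D$ and $(1+\tfrac1d)I_m$ vanish off the diagonal.

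It then remains to check the diagonal. Because $M_\alpha$ and $M_\beta$ are hollow by definition, on the diagonal one needs $A[i,i] + B[i,i] = M_D[i,i] + (1+\tfrac1d) + B[i,i]$ to equal $M[i,i] = \|v_i\|_2^4$; plugging in $M_D[i,i] = \|v_i\|_2^4 - \tfrac2d\|v_i\|_2^2 - 1$ and $B[i,i] = \tfrac1d + \tfrac2d\eta_i$ with $\eta_i = \|v_i\|_2^2 - 1$, everything telescopes to $\|v_i\|_2^4$, as desired.

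I do not expect a genuine obstacle here, since the statement is an algebraic identity. The only point requiring care is the diagonal bookkeeping: one must ensure that the $\sum_{a\neq b}$ terms, the centered squares, and the various scalar offsets all land consistently inside $M_D$ together with the shifts $(1+\tfrac1d)I_m$ and $B$, without being double-counted against the hollow matrices $M_\alpha$ and $M_\beta$. Once the $i=j$ identity above is verified, the proposition follows.
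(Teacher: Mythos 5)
Your proposal is correct and follows essentially the same route as the paper: verify the off-diagonal entries by expanding $\langle v_i,v_j\rangle^2$, splitting off the $a\neq b$ part as $M_\alpha$, and centering the $a=b$ part to produce $M_\beta$ plus the rank-2 terms via $\|v_i\|_2^2=\eta_i+1$, then check the diagonal by direct substitution of the definitions of $M_D$ and $\eta_i$. The algebra checks out, so no changes are needed.
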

\begin{proof}
    For any off-diagonal entry $i\neq j \in [m]$, on the right-hand side we have
    \begin{align*}
        M[i,j] &= \iprod{v_i, v_j}^2
        = \Paren{\sum_{a\in[d]} v_i[a] v_j[a] }^2 \\
        &= \sum_{a\neq b\in[d]} v_i[a]\cdot v_i[b]\cdot v_j[a]\cdot v_j[b]
          + \sum_{a\in[d]} v_i[a]^2 \cdot v_j[a]^2 \mper
    \end{align*}
    The first term is exactly $M_{\alpha}[i,j]$.
    For the second term,
    \begin{align*}
        \sum_{a\in[d]} v_i[a]^2 \cdot v_j[a]^2
        &= \sum_{a\in[d]} \Paren{v_i[a]^2 - \frac{1}{d}} \Paren{v_j[a]^2 - \frac{1}{d}} + \frac{1}{d} \Paren{\|v_i\|_2^2 + \|v_j\|_2^2} - \frac{1}{d} \\
        &= \underbrace{\sum_{a\in[d]} \Paren{v_i[a]^2 - \frac{1}{d}} \Paren{v_j[a]^2 - \frac{1}{d}} }_{M_{\beta}[i,j]}
          + \underbrace{ \frac{\|v_i\|_2^2 - 1}{d} }_{\frac{1}{d}\eta_i}
          + \underbrace{ \frac{\|v_j\|_2^2 - 1}{d} }_{\frac{1}{d}\eta_j} + \frac{1}{d} \mper
    \end{align*}
    Thus, $M[i,j] = M_{\alpha}[i,j] + M_{\beta}[i,j] + \frac{1}{d} + \frac{1}{d} \left( 1_m \cdot \eta^T + \eta\cdot 1_m^T\right)[i,j]$.

    For the diagonal entries, the right-hand side of the $(i,i)$ entry is
    \begin{align*}
        M_D[i,i] + \Paren{1+\frac{1}{d}} + \frac{1}{d} + \frac{2}{d} \eta_i
        &= \Paren{ \|v_i\|_2^4 - \frac{2}{d} \|v_i\|_2^2 - 1 } + 1 + \frac{2}{d} + \frac{2}{d} (\|v_i\|_2^2 -1) \\
        &= \|v_i\|_2^4 = M[i,i] \mper
    \end{align*}
    This completes the proof.
\end{proof}


\begin{remark}
    The intention behind this decomposition is so that for $v_i \sim \calN(0, \frac{1}{d}I_d)$, $M_{\alpha}, M_{\beta}, M_D$ are all mean $0$ (while not the same variance) since $\E \|v_i\|_2^2 = 1$ and $\E \|v_i\|_2^4 = 1 + \frac{2}{d}$.
    Therefore, we expect $\|M_{\alpha} + M_{\beta} + M_D\|_{\op}$ to be small, which implies that $A$ is positive definite and well-conditioned.
    Furthermore, observe that $B$ has rank 2:
    \begin{equation*}
        B = \frac{1}{d}J_m + \frac{1}{d} \left( 1_m \cdot \eta^T + \eta\cdot 1_m^T\right)
        = \frac{1}{d}\begin{bmatrix}
            1_m & \eta
        \end{bmatrix}
        \cdot 
        \begin{bmatrix}
            1 & 1 \\ 1 & 0
        \end{bmatrix}
        \cdot
        \begin{bmatrix}
            1_m \\ \eta
        \end{bmatrix}
        \mper \numberthis \label{eq:B-matrix}
    \end{equation*}
\end{remark}

\subsection{Inverse of \texorpdfstring{$M$}{M}}
\label{sec:M-inverse}

The decomposition of $M$ into $A$ and a rank-2 matrix $B$ (\Cref{eq:M-decomposition}) allows us to apply the Woodbury matrix identity about the inverse of low-rank corrections of invertible matrices.

\begin{fact}[Matrix Invertibility] \label{fact:invertibility}
    Suppose $A \in \R^{n_1 \times n_1}$ and $C \in \R^{n_2 \times n_2}$ are both invertible matrices, and $U\in \R^{n_1 \times n_2}$ and $V \in \R^{n_2 \times n_1}$ are arbitrary.
    Then, $A + U C V$ is invertible if and only if $C^{-1} + V A^{-1} U$ is invertible.
\end{fact}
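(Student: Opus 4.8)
The plan is to reduce the statement to the classical fact that, for conformable matrices $X$ and $Y$, the matrix $I + XY$ is invertible if and only if $I + YX$ is, and to get both directions simultaneously via an explicit block-matrix factorization. First I would introduce the $(n_1+n_2)\times(n_1+n_2)$ block matrix
\[
  P \coloneqq \begin{bmatrix} A & -U \\ V & C^{-1} \end{bmatrix}
\]
and decide its invertibility in two ways, by eliminating the top-left block and the bottom-right block respectively. Since $A$ is invertible, one has the unit-block-triangular factorization
\[
  P = \begin{bmatrix} I & 0 \\ VA^{-1} & I \end{bmatrix}
      \begin{bmatrix} A & 0 \\ 0 & C^{-1} + VA^{-1}U \end{bmatrix}
      \begin{bmatrix} I & -A^{-1}U \\ 0 & I \end{bmatrix} ,
\]
and because the two outer factors are always invertible and $A$ is invertible, $P$ is invertible if and only if the Schur complement $C^{-1} + VA^{-1}U$ is invertible. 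Symmetrically, since $C$ (hence $C^{-1}$) is invertible, eliminating the bottom-right block gives
\[
  P = \begin{bmatrix} I & -UC \\ 0 & I \end{bmatrix}
      \begin{bmatrix} A + UCV & 0 \\ 0 & C^{-1} \end{bmatrix}
      \begin{bmatrix} I & 0 \\ CV & I \end{bmatrix} ,
\]
so $P$ is invertible if and only if $A + UCV$ is invertible. Chaining the two equivalences yields exactly the claim.

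As an alternative that avoids the block matrix, I would write $A + UCV = A(I_{n_1} + XY)$ and $C^{-1} + VA^{-1}U = C^{-1}(I_{n_2} + YX)$ with $X \coloneqq A^{-1}U$ and $Y \coloneqq CV$; since $A$ and $C$ are invertible, this reduces the statement to ``$I + XY$ invertible $\iff$ $I + YX$ invertible'', which one proves by checking that $I - X(I + YX)^{-1}Y$ is a two-sided inverse of $I + XY$ (and the reverse implication by the symmetric computation).

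I do not expect a genuine obstacle here, as this is a standard identity; the only points requiring care are dimension bookkeeping — the two Schur complements genuinely live in different spaces, $C^{-1} + VA^{-1}U$ being $n_2 \times n_2$ while $A + UCV$ is $n_1 \times n_1$ — and verifying that each of the two block-triangular factorizations is legitimate, which is precisely where the hypotheses ``$A$ invertible'' and ``$C$ invertible'' are used. As a byproduct of the first factorization I would also record the Woodbury formula
\[
  (A + UCV)^{-1} = A^{-1} - A^{-1}U\,(C^{-1} + VA^{-1}U)^{-1}\,VA^{-1} ,
\]
valid whenever $C^{-1} + VA^{-1}U$ is invertible, since it is this explicit inverse that is used in the sequel (\Cref{eq:M-inverse-expanded}).
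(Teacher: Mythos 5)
The paper states \Cref{fact:invertibility} as a known fact and offers no proof of its own (it is treated, like the Woodbury identity in \Cref{fact:woodbury}, as a standard linear-algebra result), so there is nothing to compare against; your task is simply to supply a correct self-contained argument, and your proposal does that. Both of your block factorizations of $P=\begin{bmatrix} A & -U \\ V & C^{-1}\end{bmatrix}$ multiply out correctly (the two Schur complements are $C^{-1}+VA^{-1}U$ and $A+UCV$ respectively), the outer unit-triangular factors are invertible, and the hypotheses that $A$ and $C$ are invertible are used exactly where you say they are, so chaining the two equivalences gives the claim. Your alternative route via $A+UCV=A(I+XY)$, $C^{-1}+VA^{-1}U=C^{-1}(I+YX)$ with $X=A^{-1}U$, $Y=CV$ and the identity ``$I+XY$ invertible $\iff I+YX$ invertible'' is also correct, and the explicit inverse $I-X(I+YX)^{-1}Y$ verifies as a two-sided inverse. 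The Woodbury formula you record as a byproduct is precisely \Cref{fact:woodbury}, which the paper likewise cites rather than proves, so your write-up in fact covers both facts the paper relies on.
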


\begin{fact}[Woodbury matrix identity~\cite{Woo59}]  \label{fact:woodbury}
    Suppose $A \in \R^{n_1 \times n_1}$ and $C \in \R^{n_2 \times n_2}$ are both invertible matrices, and $U\in \R^{n_1 \times n_2}$ and $V \in \R^{n_2 \times n_1}$ are arbitrary. Then
    \[ 
    (A+ UCV)^{-1} = A^{-1} - A^{-1}U\left(C^{-1}+ VA^{-1}U\right)^{-1}VA^{-1} \mper
    \]
\end{fact}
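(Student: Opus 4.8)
The plan is to establish \Cref{fact:woodbury} by direct algebraic verification, since once one knows (via \Cref{fact:invertibility}) that all inverses appearing on the right-hand side exist, the identity is a matter of bookkeeping. Introduce the shorthand $S \coloneqq C^{-1} + V A^{-1} U \in \R^{n_2 \times n_2}$ for the capacitance matrix; by \Cref{fact:invertibility}, $S$ is invertible exactly in the regime where $A + UCV$ is, which is precisely the regime in which the claimed formula is asserted. Set
\[
  X \coloneqq A^{-1} - A^{-1} U S^{-1} V A^{-1} ,
\]
and the goal is to check $(A + UCV)\,X = I_{n_1}$; the two-sided inverse then follows from the analogous left-multiplication computation, or from uniqueness of inverses.

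Concretely, I would expand
\[
  (A+UCV)\,X = I_{n_1} - U S^{-1} V A^{-1} + UCV A^{-1} - UCV A^{-1} U S^{-1} V A^{-1} .
\]
The second and fourth summands share the right factor $S^{-1} V A^{-1}$, so they combine to $- U\big(I_{n_2} + C V A^{-1} U\big) S^{-1} V A^{-1}$. Now observe $C S = C\big(C^{-1} + V A^{-1} U\big) = I_{n_2} + C V A^{-1} U$, hence $I_{n_2} + C V A^{-1} U = C S$ and
\[
  - U\big(I_{n_2} + C V A^{-1} U\big) S^{-1} V A^{-1} = - U C S S^{-1} V A^{-1} = - U C V A^{-1} ,
\]
which cancels the third summand, leaving $(A+UCV)\,X = I_{n_1}$.

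A more structural route, which also reproves \Cref{fact:invertibility}, is to consider the block matrix $N = \begin{bmatrix} A & U \\ -V & C^{-1} \end{bmatrix}$ and write down its two block-triangular (Schur-complement) factorizations: eliminating the $(2,1)$ block using $A$ leaves $S = C^{-1} + V A^{-1} U$ in the lower-right corner, while eliminating the $(1,2)$ block using $C^{-1}$ leaves $A + UCV$ in the upper-left corner. Comparing determinants gives $\det(A)\det(S) = \det(A+UCV)\det(C^{-1})$, hence the invertibility equivalence; inverting the two factorizations and reading off the $(1,1)$ block of $N^{-1}$ in two ways yields the Woodbury formula. I do not expect a genuine obstacle here: the only point requiring care is to invoke \Cref{fact:invertibility} to license the existence of $S^{-1}$ before manipulating it, and to note that the stated identity is an equality of well-defined matrices precisely when $A+UCV$ (equivalently $S$) is invertible.
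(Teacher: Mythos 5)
Your verification is correct: the expansion of $(A+UCV)\bigl(A^{-1}-A^{-1}US^{-1}VA^{-1}\bigr)$, the identity $CS = I_{n_2}+CVA^{-1}U$, and the resulting cancellation all check out, and the Schur-complement route likewise gives both the formula and \Cref{fact:invertibility}. Note, however, that the paper offers no proof of \Cref{fact:woodbury} at all — it is invoked as a standard fact with a citation to \cite{Woo59} — so there is no in-paper argument to compare against; your self-contained derivation (with the correct caveat that $S=C^{-1}+VA^{-1}U$ must be invertible, which is exactly the regime supplied by \Cref{fact:invertibility}) is a perfectly adequate substitute.
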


In light of \Cref{fact:woodbury}, we can write $B$ in \Cref{eq:B-matrix} as $B = UCU^T$ where $U = V^T = \frac{1}{\sqrt{d}} \begin{bmatrix} 1_m & \eta \end{bmatrix} \in \R^{m \times 2}$ and
$C = \begin{bmatrix} 1 & 1 \\ 1 & 0 \end{bmatrix}$,
and $M = A + UCU^T$.
Note that $C^{-1} = \begin{bmatrix} 0 & 1 \\ 1 & -1 \end{bmatrix}$, and we have
\begin{align*}
    C^{-1} + U^T A^{-1}U  = \begin{bmatrix}
	\frac{1_m^T A^{-1} 1_m}{d} & 1 + \frac{\eta^T A^{-1} 1_m}{d}\\
	1+ \frac{\eta^T A^{-1} 1_m}{d} & -1 + \frac{\eta^T A^{-1} \eta }{d} 
    \end{bmatrix}
    \eqqcolon
    \begin{bmatrix}
    	r & s \\
    	s & u
    \end{bmatrix}
    \mper 
    \numberthis \label{eq:russ}
\end{align*}

We first need to show that $A$ is invertible.
Recall from \Cref{eq:M-decomposition} that $A = (1+\frac{1}{d})I_m + M_{\alpha} + M_{\beta} + M_D$.
We will prove the following lemma, whose proof is deferred to \Cref{sec:wrapping-up}.

\begin{lemma}[$M_{\alpha},M_{\beta},M_D$ are bounded]
\label{lem:T-norm}
    Suppose $m \leq cd^2$ for a small enough constant $c$.
    With probability $1-o_d(1)$, we have
    \begin{enumerate}
        \item $\|M_{\alpha}\|_{\op} \leq 0.1$,
        \item $\|M_{\beta}\|_{\op} \leq 0.1$,
        \item $\|M_{D}\|_{\op} \leq O\bigparen{\sqrt{\frac{\log d}{d}}}$.
    \end{enumerate}
\end{lemma}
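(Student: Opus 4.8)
\emph{Strategy.} I would bound $M_\alpha$, $M_\beta$, and $M_D$ one at a time, since they have genuinely different magnitudes: $\|M_D\|_{\op}$ is of order $\sqrt{\log d/d}$, $\|M_\beta\|_{\op}$ of order $m/d^2$, and $\|M_\alpha\|_{\op}$ of order $\sqrt m/d$, so only the last and largest one decides whether $m\le cd^2$ suffices and only it really needs a bound tight up to a constant. The matrix $M_D$ is diagonal and $M_D[i,i]=\|v_i\|_2^4-\frac2d\|v_i\|_2^2-1$ is a quadratic in the scalar $s_i\coloneqq\|v_i\|_2^2=\frac1d\chi_d^2$; writing $s_i=1+\xi_i$ we get $M_D[i,i]=2(1-\tfrac1d)\xi_i+\xi_i^2-\tfrac2d$, so $|M_D[i,i]|\le O(|\xi_i|)+O(1/d)$, and a $\chi^2$ tail bound together with a union bound over the $m=\mathrm{poly}(d)$ indices gives $\|M_D\|_{\op}=\max_i|M_D[i,i]|\le O(\sqrt{\log d/d})$. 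For $M_\beta$ the key observation is that it is a centered sample-covariance matrix with its diagonal removed: with $z_i\coloneqq(v_i[a]^2-\tfrac1d)_{a\in[d]}\in\R^d$ and $W\in\R^{m\times d}$ the matrix with rows $z_i^\top$, we have $M_\beta=WW^\top-\mathrm{diag}(\|z_i\|_2^2)$, hence $\|M_\beta\|_{\op}\le\|W^\top W\|_{\op}+\max_i\|z_i\|_2^2$. The $z_i$ are i.i.d.\ with $\E[z_iz_i^\top]=\tfrac2{d^2}I_d$, so $\E[W^\top W]=\tfrac{2m}{d^2}I_d\preceq 2c\,I_d$, while $\left\|W^\top W-\E W^\top W\right\|_{\op}=\left\|\sum_{i=1}^m\bigparen{z_iz_i^\top-\tfrac2{d^2}I_d}\right\|_{\op}=o_d(1)$ by a routine matrix-concentration estimate (its true value decays polynomially in $d$, so polylogarithmic losses are harmless here). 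Combined with $\max_i\|z_i\|_2^2=O(\polylog(d)/d)$, this gives $\|M_\beta\|_{\op}\le 2c+o_d(1)\le 0.1$ for $c$ a small enough constant. (Alternatively, $M_\beta$ is a graph matrix for the length-two path shape with a degree-two Hermite label on the internal vertex, and the norm bound below applies to it verbatim.)

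\emph{The crux: $M_\alpha$.} Setting $g_i[a]\coloneqq\sqrt d\,v_i[a]\sim\calN(0,1)$, the matrix $d^2M_\alpha$ is exactly the graph matrix for the shape $\alpha$ that is a four-cycle $i-a-j-b-i$, with $i,j$ boundary vertices of index type $[m]$, $a,b$ internal vertices of index type $[d]$, each of the four edges carrying a degree-one Hermite label, and the distinctness constraints $i\neq j$, $a\neq b$. (Equivalently $M_\alpha=UU^\top-\mathrm{diag}(\|u_i\|_2^2)$ where $U$ has rows $u_i=v_i^{\otimes 2}$ restricted to off-diagonal coordinates, so once its near-identity diagonal is subtracted the spectrum has size of order $\sqrt m/d$ rather than $\Theta(1)$; this is why the ``no immediate repeat'' structure of the relevant closed walks is essential.) I would bound $\|M_\alpha\|_{\op}$ by the trace-moment method: estimate $\E\tr(M_\alpha^{2q})$ by expanding it into a sum over closed walks $i_1\to i_2\to\cdots\to i_{2q}\to i_1$ on $[m]$, each step decorated by a pair $(a_t,b_t)\in[d]^2$ with $a_t\neq b_t$ and each step forcing $i_t\neq i_{t+1}$; by Wick's formula only walks in which every underlying $([m]\times[d])$-edge appears an even number of times contribute, and these are counted via the encoding scheme of \Cref{sec:overview-norm-bounds} --- label each step as visiting a new vertex, reaching an old vertex along a new edge, reusing an old edge but not for the last time, or reusing an old edge for the last time (\Cref{def:step-labeling}), and pay for the information needed to resolve future confusions exactly during the cheap second- and third-type steps, where there is spare ``gap''. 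This yields $\E\tr(M_\alpha^{2q})\le\mathrm{poly}(m)\cdot q^{O(1)}\cdot\bigparen{C\sqrt m/d}^{2q}$ for an absolute constant $C$ and with \emph{no} logarithmic factors; taking $q=d^{\Omega(1)}$ and applying Markov's inequality gives $\|M_\alpha\|_{\op}\le O(\sqrt m/d)\le 0.1$ once $c$ is a small enough absolute constant, and with failure probability $2^{-d^{\Omega(1)}}$.

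\emph{Main obstacle.} Everything hinges on that last estimate being tight. Off-the-shelf graph-matrix norm bounds give only $\|M_\alpha\|_{\op}\le\polylog(d)\cdot\sqrt m/d$, which is useless at $m=\Theta(d^2)$ since $\polylog(d)\cdot\sqrt m/d$ cannot be made below $0.1$ for a \emph{constant} $c=m/d^2$. Removing this logarithmic overhead --- which, in the language above, means never overpaying to resolve confusions and instead charging that cost to the cheap steps --- is the new ingredient, and pushing it through carefully (here for the simple shape $\alpha$, and for the considerably more intricate shapes that arise in the analysis of $\calR$, where the walker can be confused in more ways; cf.\ \Cref{sec:resolving-confusion}) is the bulk of the work behind this lemma.
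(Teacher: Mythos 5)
Your treatment of $M_D$ and $M_\beta$ is correct but genuinely different from the paper's. The paper handles all three matrices uniformly with the graph-matrix machinery: $M_\beta$ via the block-value bound of \Cref{lem:M-beta-norm-bound} in \Cref{sec:wrapping-up}, and $M_D$ by first splitting it into the three shapes $M_{D,1},M_{D,2},M_{D,3}$ of \Cref{prop:D-decomposition} and bounding each in \Cref{lem:block-bound-M-D} (\Cref{sec:M-D}). Your route — a scalar $\chi^2$ tail bound plus a union bound for the diagonal matrix $M_D$, and the identity $M_\beta = WW^{\top}-\mathrm{diag}(\|z_i\|_2^2)$ with $\E[W^{\top}W]=\tfrac{2m}{d^2}I_d$ plus a matrix-Bernstein-type deviation bound — is more elementary and entirely sufficient for the $0.1$ and $O(\sqrt{\log d/d})$ thresholds the lemma asks for (indeed your $M_D$ bound matches the stated rate, while the paper's appendix argument only gives $\widetilde{O}(1/\sqrt d)$); what the paper's heavier treatment buys is the sharp constant $(1+o_d(1))\tfrac{2m}{d^2}$ for $M_\beta$, exponentially small failure probability, and a warm-up for the machinery reused on $\calR$.

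For $M_\alpha$, which you correctly identify as the only place where a log-free bound is truly forced, your route coincides with the paper's (\Cref{lem:M-alpha-norm-bound}), but the decisive step is asserted rather than carried out: the claim $\E\tr(M_\alpha^{2q})\le \mathrm{poly}(m)\,q^{O(1)}\,(C\sqrt m/d)^{2q}$ is exactly what needs proof, and "charge the confusion cost to the cheap steps" is not by itself enough to get there. Two concrete ingredients are missing. First, the redistribution of vertex weights — assigning $\sqrt{\wt(i)}$ to both the first and the last global appearance of a vertex (\Cref{sec:vertex-factor-scheme}) — is what makes a purely local, per-block accounting give $B_q(\alpha)\le(1+o_d(1))\tfrac{1}{d^2}(3d\sqrt m+2m)$; without it the naive encoding degenerates to a row-sum-type bound. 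Second, the unforced-return bookkeeping you allude to is calibrated for vertices entered and left by a single edge; the square vertices of the $\alpha$ shape are entered and exited via two edges per block, so the "1-in-1-out" potential argument breaks, and the paper patches it with the separate $2$-in-$2$-out bound of \Cref{lem:pur-square} together with the constant-cost convention of \Cref{cor:pur-square} (paying a label in $[2]$ per return from a square vertex, visible as the factor $2$ in the $2m/d^2$ and $2d\sqrt m/d^2$ terms). You are explicit that this counting is "the bulk of the work," which is accurate — but it means your proposal for the dominant matrix $M_\alpha$ is a correct plan rather than a proof.
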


As an immediate consequence, we get the following:
\begin{lemma}[$A$ is well-conditioned] \label{lem:A-eigenvalues}
    With probability $1-o_d(1)$, the matrix $A$ from \Cref{eq:M-decomposition} is positive definite (hence full rank), and
    \begin{equation*}
        0.5 I_m \preceq A \preceq 1.5 I_m \mper
    \end{equation*}
\end{lemma}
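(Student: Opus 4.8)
The plan is to obtain \Cref{lem:A-eigenvalues} as an immediate corollary of \Cref{lem:T-norm} via Weyl's eigenvalue perturbation inequality. Recall from \Cref{eq:M-decomposition} that $A = (1+\frac{1}{d})I_m + E$, where $E \coloneqq M_\alpha + M_\beta + M_D$. The matrix $(1+\frac{1}{d})I_m$ is a positive multiple of the identity whose unique eigenvalue $1+\frac{1}{d}$ lies in $[1,1.25]$ as soon as $d\ge 4$, so the entire statement reduces to a sufficiently good bound on $\|E\|_{\op}$.

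First I would apply the triangle inequality for the operator norm and then invoke \Cref{lem:T-norm}: conditioned on the $1-o_d(1)$-probability event of that lemma,
\[
  \|E\|_{\op} \;\le\; \|M_\alpha\|_{\op} + \|M_\beta\|_{\op} + \|M_D\|_{\op} \;\le\; 0.1 + 0.1 + O\!\left(\sqrt{\tfrac{\log d}{d}}\right) \;\le\; 0.25 ,
\]
where the last step uses that the $O(\sqrt{\log d/d})$ term is $o_d(1)$, hence below $0.05$ once $d$ exceeds an absolute constant (for the finitely many small $d$ one simply absorbs everything into the $o_d(1)$ in the statement, since the claim is only needed asymptotically).

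Finally, by Weyl's inequality every eigenvalue $\lambda$ of $A = (1+\frac{1}{d})I_m + E$ satisfies $\bigl|\lambda - (1+\tfrac{1}{d})\bigr| \le \|E\|_{\op} \le 0.25$, so $0.5 \le 1 - 0.25 \le \lambda \le 1 + \tfrac{1}{d} + 0.25 \le 1.5$ using $\tfrac{1}{d}\le 0.25$. In particular all eigenvalues are strictly positive, so $A$ is positive definite — and hence full rank — and $0.5 I_m \preceq A \preceq 1.5 I_m$, as claimed. I do not expect any genuine obstacle at this step: the entire content of the lemma is carried by \Cref{lem:T-norm}, whose polylog-free spectral norm bounds on $M_\alpha$, $M_\beta$, $M_D$ are the actual technical heart and are deferred to the later graph-matrix sections.
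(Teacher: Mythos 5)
Your proposal is correct and is essentially the paper's own argument: the paper likewise writes $A=(1+\tfrac{1}{d})I_m+M_\alpha+M_\beta+M_D$ and concludes from \Cref{lem:T-norm} that the eigenvalues lie in $1\pm 0.2\pm\widetilde{O}(1/\sqrt{d})\subset(0.5,1.5)$ for large $d$. Your explicit use of the triangle inequality plus Weyl's perturbation bound just spells out the same one-line deduction.
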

\begin{proof}
    Since $A = (1+\frac{1}{d})I_m + M_{\alpha} + M_{\beta} + M_D$, by \Cref{lem:T-norm} the eigenvalues of $A$ must lie within $1 \pm 0.2 \pm \widetilde{O}(1/\sqrt{d}) \in (0.5, 1.5)$ (we assume $d$ is large).
\end{proof}


Next, from \Cref{fact:invertibility}, we can prove that $M$ is invertible (\Cref{lem:M-invertible}) by showing that the $2\times 2$ matrix $C^{-1} + U^T A^{-1}U$ is invertible, which is in fact equivalent to $ru - s^2 \neq 0$.
We first need the following bound on the norm of $\eta$, whose proof is deferred to \Cref{sec:missing-proofs}.

\begin{claim}\label{Claim:eta-bound}
 With probability at least $1-o_d(1)$, \[ 
 \|\eta \|_2^2 \leq (1+o_d(1)) \frac{2m}{d} \mper
 \]
 \end{claim}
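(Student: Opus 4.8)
## Proof plan for Claim~\ref{Claim:eta-bound}

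The plan is to compute $\E\|\eta\|_2^2$ exactly and then show concentration. Recall $\eta_i = \|v_i\|_2^2 - 1$ where $v_i \sim \calN(0,\frac{1}{d}I_d)$, so $\|v_i\|_2^2 = \sum_{a\in[d]} v_i[a]^2$ is a sum of $d$ i.i.d.\ scaled $\chi^2_1$ variables, each with mean $\frac{1}{d}$ and variance $\frac{2}{d^2}$. Hence $\E\eta_i = 0$ and $\E\eta_i^2 = \Var(\|v_i\|_2^2) = \frac{2}{d}$, giving $\E\|\eta\|_2^2 = \sum_{i=1}^m \E\eta_i^2 = \frac{2m}{d}$ exactly. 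This already matches the target up to the $(1+o_d(1))$ factor, so what remains is to upgrade the expectation bound to a high-probability bound.

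The first step is therefore to establish concentration of $\|\eta\|_2^2 = \sum_{i=1}^m \eta_i^2$ around its mean $\frac{2m}{d}$. Since the $v_i$ are independent, the $\eta_i^2$ are independent nonnegative random variables. The cleanest route is a Chebyshev/second-moment argument: I would bound $\Var(\|\eta\|_2^2) = \sum_{i=1}^m \Var(\eta_i^2)$, where each $\Var(\eta_i^2) = \E\eta_i^4 - (\E\eta_i^2)^2$ is a fixed $O(1/d^2)$ quantity computable from the moments of a $\chi^2$ (one gets $\E\eta_i^4 = O(1/d^2)$ by standard fourth-moment formulas for centered $\chi^2_d/d$). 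This yields $\Var(\|\eta\|_2^2) = O(m/d^2)$, and Chebyshev gives $\Pr\bigl[\,\bigl|\|\eta\|_2^2 - \tfrac{2m}{d}\bigr| \ge t\,\bigr] \le O(m/d^2)/t^2$. Taking $t = \frac{2m}{d} \cdot \delta_d$ for any slowly vanishing $\delta_d$ with $\delta_d^2 \cdot \frac{m}{d^2} \cdot \frac{d^2}{m^2} \to 0$, i.e.\ $\delta_d \gg \sqrt{d/m} \cdot \sqrt{d}/\sqrt{m}$... more simply, since $m$ is typically $\Theta(d^2)$ in the regime of interest, $\Var(\|\eta\|_2^2)/(\E\|\eta\|_2^2)^2 = O(m/d^2)/(m/d)^2 = O(1/m) = o_d(1)$, so Chebyshev directly gives $\|\eta\|_2^2 \le (1+o_d(1))\frac{2m}{d}$ with probability $1 - o_d(1)$.

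Alternatively, and to get the stronger tail hinted at elsewhere in the paper, one can invoke a Bernstein-type inequality for sums of independent subexponential variables: each $\eta_i$ is subexponential with parameter $O(1/d)$ (as a centered, normalized $\chi^2_d$), so $\eta_i^2$ has subexponential tails of order $O(1/d)$ as well, and Bernstein gives $\Pr[\|\eta\|_2^2 \ge \frac{2m}{d} + t] \le \exp\bigl(-c\min\{\frac{t^2 d^2}{m}, t d\}\bigr)$; choosing $t = o_d(1)\cdot\frac{m}{d}$ appropriately makes this exponentially small. Either way the claim follows.

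I do not anticipate a genuine obstacle here — this is a routine concentration statement about chi-squared deviations. The only mild care needed is bookkeeping: confirming the exact second moment $\E\eta_i^2 = \frac{2}{d}$ (which uses $\E v_i[a]^4 = \frac{3}{d^2}$, $\E v_i[a]^2 v_i[b]^2 = \frac{1}{d^2}$ for $a\ne b$, so $\E\|v_i\|_2^4 = \frac{3d + d(d-1)}{d^2} = 1 + \frac{2}{d}$ and hence $\E\eta_i^2 = \E\|v_i\|_2^4 - 2\E\|v_i\|_2^2 + 1 = \frac{2}{d}$), and checking that the fourth-moment bound $\E\eta_i^4 = O(1/d^2)$ really does hold so that the variance of the sum is controlled. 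Given that $m = O(d^2)$, these crude bounds are more than enough.
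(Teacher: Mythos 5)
Your argument is correct, and the bookkeeping is right: $\E\eta_i^2=\Var(\|v_i\|_2^2)=\tfrac{2}{d}$, $\E\eta_i^4=O(1/d^2)$, hence $\Var(\|\eta\|_2^2)=O(m/d^2)$ and Chebyshev gives $\|\eta\|_2^2=(1\pm o_d(1))\tfrac{2m}{d}$ with probability $1-O(1/(\eps^2 m))=1-o_d(1)$ (using only $m\to\infty$, which holds in the paper's regime $\omega(d)\le m\le cd^2$). This is, however, a genuinely different route from the paper's: the paper does not compute the variance at all, but instead bounds the high moment $\E\|\eta\|_2^{2q}\le(1+o_d(1))(2m/d)^q$ by expanding $\eta_i^2=\sum_{j_1,j_2}h_2(v_i[j_1])h_2(v_i[j_2])$ and running the same walk-counting/encoding machinery used for its graph-matrix norm bounds, then applies Markov after taking the $q$-th root. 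Your Chebyshev argument is more elementary and fully sufficient for the claim as stated, while the paper's high-moment argument (with $q$ polylogarithmic or $d^{\eps}$) yields a much smaller failure probability, which matters for the paper's remark that the overall failure probability is $2^{-d^{\eps}}$. One caveat on your "alternatively" route: $\eta_i^2$ is not literally sub-exponential with parameter $O(1/d)$ in the far tail (the square of a centered, scaled $\chi^2_d$ is sub-Weibull with exponent $1/2$ there), so a textbook Bernstein inequality does not apply verbatim; you would need a truncation or a moment-condition version of Bernstein to make that variant rigorous. Since the Chebyshev route already proves the claim, this does not affect correctness.
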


 \begin{lemma}[Bounds on $r,s,u$; $M$ is invertible]  \label{lem:M-invertible}
     Suppose $m \leq cd^2$ for a small enough constant $c$.
     Let $r,s,u \in \R$ be defined as in \Cref{eq:russ}.
     With probability at least $1-o_d(1)$, we have
     \begin{enumerate}
         \item $r \in \frac{m}{d} \cdot [2/3, 2]$,
         \item $|s| \leq 1+o_d(1)$,
         \item $u \in [-1, -1/2]$.
     \end{enumerate}
     Thus, we have
     \begin{align*}
         s^2 - ru \geq \Omega\Paren{\frac{m}{d}} \mper
     \end{align*}
     As a consequence, $M$ is invertible.
 \end{lemma}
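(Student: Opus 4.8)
The plan is to derive everything from the two-sided bound $0.5\,I_m \preceq A \preceq 1.5\,I_m$ of \Cref{lem:A-eigenvalues}, equivalently $\tfrac23 I_m \preceq A^{-1} \preceq 2 I_m$ (so $A$ is invertible), together with \Cref{Claim:eta-bound}. Three of the four bounds are then immediate. For $r = \tfrac1d\, 1_m^\top A^{-1} 1_m$, sandwiching $A^{-1}$ and using $\|1_m\|_2^2 = m$ gives $r \in \tfrac md[\tfrac23, 2]$. For $u = -1 + \tfrac1d \eta^\top A^{-1}\eta$, positivity of $A^{-1}$ gives $u \ge -1$, while $\eta^\top A^{-1}\eta \le 2\|\eta\|_2^2 \le (1+o_d(1))\tfrac{4m}{d}$ gives $u \le -1 + (1+o_d(1))\tfrac{4m}{d^2} \le -1 + 4c(1+o_d(1)) \le -\tfrac12$ once $c$ is a small enough constant. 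Given $r \ge \tfrac{2m}{3d}$ and $u \le -\tfrac12$, we get $s^2 - ru \ge -ru = r|u| \ge \tfrac{m}{3d} = \Omega(m/d)$ \emph{for every value of $s$}; and $M$ is then invertible by \Cref{fact:invertibility}, since $A$ and $C$ are invertible and $\det(C^{-1}+U^\top A^{-1}U) = ru - s^2 \le -\tfrac{m}{3d} \ne 0$.

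The only real work is the bound $|s| \le 1+o_d(1)$, i.e.\ $\tfrac1d |\eta^\top A^{-1}1_m| = o_d(1)$. Crude estimates are useless here: Cauchy--Schwarz gives only $\tfrac1d \|\eta\|_2 \|A^{-1}1_m\|_2 \le \tfrac1d \sqrt{(1+o(1))\tfrac{2m}{d}} \cdot 2\sqrt m = \Theta(c\sqrt d) \to \infty$. One must exploit that the coordinates of $\eta$, namely $\|v_i\|_2^2 - 1$, are i.i.d.\ mean-zero and essentially uncorrelated with $A^{-1}1_m$. The plan is to expand $A^{-1} = \sum_{k\ge 0} (-1)^k \lambda^{-(k+1)} T^k$ with $\lambda = 1+\tfrac1d$ and $T = M_\alpha + M_\beta + M_D$ (valid since $\|T\|_{\op} < 1$ by \Cref{lem:T-norm}), truncate at $k = K = O(\log d)$ so that the tail is $O(\|T\|_{\op}^{K} \cdot c\, d^{3/2}) = o_d(d)$, and then control each $\eta^\top T^k 1_m$ separately.

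For $k=0$ the term is $\lambda^{-1}\eta^\top 1_m = \lambda^{-1}\sum_i (\|v_i\|_2^2 - 1)$, a sum of $m$ i.i.d.\ sub-exponential mean-zero variables of variance $\tfrac2d$, so by Bernstein $|\eta^\top 1_m| \le \widetilde O(\sqrt{m/d}) = o_d(d)$ with high probability. The $k=1$ term is the subtle one: $\eta^\top T 1_m$ is \emph{not} mean-zero, and its expectation is dominated by the diagonal contribution $\E[\eta^\top M_D 1_m] = m(\tfrac4d + \tfrac4{d^2}) = \Theta(d)$, which is \emph{positive}. I would compute this mean exactly and prove concentration around it via a second-moment (trace-moment) estimate, yielding $\eta^\top T 1_m = (1+o_d(1))\tfrac{4m}{d}$; the point is that the resulting contribution $-\lambda^{-2}\eta^\top T 1_m$ to $\eta^\top A^{-1}1_m$ is \emph{negative} (which is exactly what gives the one-sided bound $s \le 1+o_d(1)$, even though $s$ does not actually tend to $1$) and of magnitude $O(cd) \ll d$ (which gives $s \ge -1-o_d(1)$). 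For $2 \le k \le K$ I need a genuinely refined estimate, $|\eta^\top T^k 1_m| \le \mathrm{poly}(k)\cdot \gamma^k \cdot d$ for some constant $\gamma < 1$, obtained by a careful moment-method bound on $\E[(\eta^\top T^k 1_m)^2]$ --- equivalently, norm bounds on the composed graph matrices underlying $T^k$ when contracted against the test vectors $\eta$ and $1_m$ --- since the operator-norm bound $\|T\|_{\op}^k \|\eta\|_2 \|1_m\|_2 = \Theta(\gamma^k c\, d^{3/2})$ is off by a factor $\sqrt d$. Summing, $\sum_{2\le k\le K}\lambda^{-(k+1)}|\eta^\top T^k 1_m| = o_d(d)$, and altogether $\tfrac1d \eta^\top A^{-1}1_m = -(1+o_d(1))\tfrac{4m}{d^2} + o_d(1) = -\Theta(c) + o_d(1)$, so $s = 1 - \Theta(c) + o_d(1) \in [-1,1]$ for $c$ small and $d$ large, giving $|s| \le 1 + o_d(1)$.

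The main obstacle is precisely the $2 \le k \le K$ case above: every crude tool (the operator norm of $T$, Cauchy--Schwarz against $\eta$ and $1_m$) loses a $\mathrm{poly}(d)$ factor there, so this step has to go through the paper's fine-grained graph-matrix norm bounds. A secondary delicate point is the $k=1$ term: one has to notice that $\eta^\top M_D 1_m$ contributes $+\Theta(d)$ and that it enters $\eta^\top A^{-1}1_m$ with a minus sign, which is what rescues the one-sided inequality $|s| \le 1+o_d(1)$.
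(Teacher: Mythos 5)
Your handling of $r$, $u$, the bound $s^2-ru \ge -ru \ge \tfrac{m}{3d}$ (correctly noting no information about $s$ is needed there), and the invertibility of $M$ via \Cref{fact:invertibility} coincides with the paper's proof. Your diagnosis of the bound on $s$ is also sharp: Cauchy--Schwarz against $\|\eta\|_2\|A^{-1}1_m\|_2$ indeed loses a $\sqrt d$ factor, and your computation $\E[\eta^\top M_D 1_m]=m(\tfrac4d+\tfrac4{d^2})$ is correct and identifies a genuinely non-vanishing $\Theta(m/d^2)=\Theta(c)$ contribution to $\tfrac1d\eta^\top A^{-1}1_m$ that enters with a negative sign --- a point the paper's own \Cref{claim: bound-for-s} (which asserts $\tfrac1d|\eta^\top A^{-1}1_m|=o_d(1)$ outright) passes over, and which is exactly why only the one-sided conclusion $|s|\le 1+o_d(1)$ is safe.

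The gap is that the heart of the lemma is planned rather than proved, and the plan as stated does not quite close quantitatively. The estimates you declare you need --- concentration of $\eta^\top T 1_m$ about its mean, and $|\eta^\top T^k 1_m|\le \mathrm{poly}(k)\,\gamma^k d$ for $2\le k\le K$ --- are precisely the refined trace-method/graph-matrix content that constitutes the paper's actual argument (the truncation $A^{-1}=T_0+E$ of \Cref{def:truncated-A-inverse} together with the block-value analysis of $\tfrac1d\eta^\top T_0 1_m$ in \Cref{sec:missing-proofs}); asserting they are needed does not discharge them, and they cannot be obtained from \Cref{lem:T-norm} or \Cref{lem:M-beta-norm-bound}/\Cref{lem:M-alpha-norm-bound} by any black-box norm argument. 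Moreover, with ``some constant $\gamma<1$'' the tail $\sum_{2\le k\le K}\mathrm{poly}(k)\gamma^k d$ is $\Theta(d)$, not $o_d(d)$, so your final display $\tfrac1d\eta^\top A^{-1}1_m=-(1+o_d(1))\tfrac{4m}{d^2}+o_d(1)$ does not follow from the stated ingredients. To make the term-by-term route work you need $\gamma$ to scale with $c$ (realistically $\gamma=\Theta(\sqrt c)$, coming from the $3d\sqrt m/d^2$ part of the $M_\alpha$ norm), and even then the $k=2$ absolute bound is $\Theta(cd)$, the same order as the $k=1$ mean $4m/d\approx 4cd$, so an absolute-value bound at $k=2$ could overwhelm the negative $k=1$ term and destroy the upper bound $s\le 1+o_d(1)$. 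You would have to either track the $k=2$ means as well (they are in fact $O(c^2d)$, e.g.\ $\E[\eta^\top M_\alpha^2 1_m]\approx 8m^2/d^3$) and prove concentration, or prove genuinely sharper high-probability bounds per term --- i.e., redo in substance the graph-matrix analysis the paper carries out for the aggregated truncated series.
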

\begin{proof}
    By \Cref{lem:A-eigenvalues}, we know that $\frac{2}{3} I_m \preceq A^{-1} \preceq 2I_m$.
    Thus, $r = \frac{1}{d} 1_m^T A^{-1} 1_m \in \frac{1}{d} \|1_m\|_2^2 \cdot [2/3,2]$, hence $r \in \frac{m}{d} \cdot [2/3, 2]$.

%
    For $u$, we have
    \begin{align*}
        \frac{1}{d} \Abs{\eta^T A^{-1} \eta} \leq  \frac{1}{d} \| A^{-1}\|_{\op} \cdot \|\eta\|_2^2 < (1+o_d(1))\cdot \frac{4m}{d^2} <  \frac{1}{2} \mcom
    \end{align*}
    where the last inequality follows for some $m < c d^2$ for small enough $c$.
    Thus, $u = -1 + \frac{\eta^T A^{-1} \eta}{d} \in [-1, -1/2]$.

    We defer the proof for $s$ to \Cref{claim: bound-for-s} in the appendix. With the bounds on $r$, $s$ and $u$, we immediatley get $s^2 - ru \geq \Omega(\frac{m}{d})$.

    To prove that $M$ is invertible, let us first recall that we write $M = A + UCU^T$ where $A$ is defined in \Cref{eq:M-decomposition} and $U = V^T = \frac{1}{\sqrt{d}} \begin{bmatrix} 1_m & \eta \end{bmatrix} \in \R^{m \times 2}$ and
    $C = \begin{bmatrix} 1 & 1 \\ 1 & 0 \end{bmatrix}$.

    By \Cref{lem:A-eigenvalues}, $A$ is invertible. 
    Then by \Cref{fact:invertibility}, we know that $M$ is invertible if and only if $C^{-1} + U^T A^{-1} U \coloneqq \begin{bmatrix}
        r & s \\ s & u
    \end{bmatrix}$ (see \Cref{eq:russ}) is invertible, which is equivalent to $ru - s^2 \neq 0$.
    Thus, $s^2 - ru \geq \Omega(\frac{m}{d})$ suffices to conclude that $M$ is invertible.
\end{proof}

\subsection{Finishing the proof of \texorpdfstring{\Cref{thm:main-thm}}{Theorem \ref{thm:main-thm}}}
\label{sec:finish-main-thm}

The final piece of proving \Cref{thm:main-thm} is to show that $\cR = \sum_{i=1}^m w_i v_i v_i^T$ has spectral norm bounded by 1, which immediately implies that the candidate matrix $\Lambda = I_d - \cR \succeq 0$.

\begin{lemma}[$\calR$ is bounded]\label{lem:R-norm-bound}
 There exists some absolute constant $C_R$ such that for $m \leq \frac{d^2}{C_R}$, \[ 
 \Norm{\cR}_{\op} \leq \frac{1}{2} \mper
 \]
\end{lemma}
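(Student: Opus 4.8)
\textbf{Proof plan for \Cref{lem:R-norm-bound}.}
The starting point is the formula $\cR = \sum_{i=1}^m (M^{-1}\eta)[i]\cdot v_iv_i^T$ together with the Woodbury expansion of $M^{-1}$ derived in \Cref{sec:M-inverse}. Substituting
\[
    M^{-1} = A^{-1} - A^{-1}U\begin{bmatrix} r & s \\ s & u\end{bmatrix}^{-1}U^T A^{-1}
\]
into $\cR$ splits it into a ``main'' term coming from $A^{-1}\eta$ and a rank-2 correction term whose coefficient vector lies in $\mathrm{span}(A^{-1}1_m, A^{-1}\eta)$. By \Cref{lem:M-invertible} the inverse of the $2\times2$ matrix has entries of size $O(d/m)$, and the vectors $1_m,\eta$ have norms $\sqrt m$ and $O(\sqrt{m/d})$ respectively, so one expects the correction term to contribute a bounded-norm matrix of the form $\sum_i c_i v_iv_i^T$ with a low-complexity coefficient vector $c$; this piece I would handle by the same graph-matrix norm bounds as the main term (indeed it is simpler, being a constant-rank modification). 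So the crux is to bound $\|\sum_i (A^{-1}\eta)[i]\, v_iv_i^T\|_{\op}$.

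For the main term I would expand $A^{-1} = (I_m - T)^{-1}$ (writing $A = (1+\tfrac1d)I_m - (-M_\alpha - M_\beta - M_D)$, absorbing the scalar, so that $\|T\|_{\op}\le 0.4 < 1$ by \Cref{lem:T-norm}) as a Neumann series $\sum_{k\ge0} T^k$, and truncate at some $k = K = \Theta(\log d)$ or a constant depending on the target $c$; the tail $\sum_{k>K}T^k$ has operator norm $\le \|T\|^{K+1}/(1-\|T\|)$, and since $\|\eta\|_2 = O(\sqrt{m/d})$ and each $v_iv_i^T$ has norm $O(1)$ while the vector of coefficients has $\ell_2$ norm controlled, the tail contributes $o(1)$ to $\|\cR\|_{\op}$. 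For each retained term $T^k\eta$, I would write $\sum_i (T^k\eta)[i]\,v_iv_i^T$ as a $d\times d$ graph matrix (or sum of $O(1)$ graph matrices after further decomposing $T = -M_\alpha-M_\beta-M_D$ into its three pieces and $\eta$ into its shape): each factor of $M_\alpha$ or $M_\beta$ contributes a ``block'' of the shape on the vertex set $[m]\sqcup[d]$, each factor contributes two ``$v$-type'' edges, and the outer $v_iv_i^T$ contributes the two boundary-$[d]$ legs. This realizes $\cR$ (up to the truncation error and the rank-2 correction) as a sum of $\mathrm{poly}(K)$-many graph matrices $M_\alpha$ (abuse of notation) of growing but bounded shape.

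The heart of the argument is then the new tight norm bound for these graph matrices: each shape of ``walk length'' $k$ contributes operator norm at most $(C'\sqrt{m}/d)^{k}\cdot \mathrm{poly}(k)$ or similar — crucially \emph{with no $\mathrm{polylog}(d)$ factor}, which is exactly the technical innovation advertised in the introduction (the step-labeling / gap-charging scheme of \Cref{sec:overview-norm-bounds}). Given such bounds, the geometric series in $k$ converges once $m \le d^2/C_R$ with $C_R$ large enough, and summing over the $O(1)$ shapes at each level and over $k$ yields $\|\cR\|_{\op}\le \tfrac12$. The main obstacle — and where essentially all the work lies — is establishing these constant-tight norm bounds for the graph matrices arising from $T^k\eta$ and the outer product: the shapes have non-trivial boundary structure (two $[d]$-vertices on each side), the $M_D$ diagonal pieces and the $\eta$ shape need separate (easier) treatment via their own concentration, and one must verify that the combinatorial encoding argument is uniform in $k$ so that the per-level constant does not blow up with $k$. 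Assembling these bounds and carefully tracking the dependence of constants on $k$ is the step I expect to be the bottleneck; the Neumann truncation, the Woodbury bookkeeping, and the rank-2 correction are comparatively routine given \Cref{lem:T-norm}, \Cref{lem:M-invertible}, and \Cref{Claim:eta-bound}.
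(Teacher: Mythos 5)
Your outline retraces the paper's route for this lemma: Woodbury (\Cref{prop:M-inverse-eta}) reduces $M^{-1}\eta$ to a combination of $A^{-1}\eta$ and $A^{-1}1_m$, the Neumann series of $A^{-1}$ is truncated and the tail controlled via \Cref{lem:T-norm} (\Cref{def:truncated-A-inverse}, \Cref{lem:truncated-A-inverse}, \Cref{prop:R-decomposition}; note that a constant truncation level is not an option, since the crude tail estimate carries a $\mathrm{poly}(d)$ factor, so you do need $K=\Theta(\log d)$ as in the paper), and the retained terms are exactly the dangling-shape graph matrices $\sum_i (Q_1\cdots Q_k\eta)[i]\,v_iv_i^T$ and $\sum_i (Q_1\cdots Q_k 1_m)[i]\,v_iv_i^T$ of \Cref{sec:R-decomposition}, whose constant-tight norm bounds drive a geometric series in the number of gadgets (also, the number of shapes with $k$ gadgets is $4^k$, not $O(1)$, which is one more reason the per-gadget constant has to be tracked).

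The genuine gap is that you outsource the heart of the lemma --- per-shape bounds of order $(C\sqrt{m}/d)^{k}$, uniform in $k$ --- to the generic step-labeling/gap-charging machinery of \Cref{sec:overview-norm-bounds}, and that machinery, as developed there, does not apply to these shapes. The $\pur$ accounting of \Cref{lem:pur-circle,lem:pur-square} charges every unforced return to a surprise or high-mul step; for the dangling shapes the square vertex on the $U$--$V$ path can recur in every block and accumulate arbitrarily many open edges with no surprise or high-mul step at all, so the slack you intend to charge simply is not there. The paper points this out explicitly and replaces the argument by a new $\pur$ bound in terms of global ``middle appearances,'' paid for by the mismatch between local and global first appearances (\Cref{sec:analysis-of-R}). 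Similarly, getting only $\sqrt{m}$ (resp.\ $\sqrt{d}$) per square (resp.\ circle) vertex requires the redistribution together with the ``top-down''/reverse-charging edge assignment and the identification of a block-reserve edge-copy (\Cref{prop:top-down-charging}), with separate treatment of $U=V$ blocks and of the first square vertex. Finally, your remark that the rank-2 correction piece is ``simpler'' is miscalibrated: the $A^{-1}1_m$ terms are precisely the ones missing the final $h_2$ attachment that supplies the crucial $1/\sqrt{d}$ gap, and a decoupled bound (sup-norm of the coefficient vector times $\|\sum_i v_iv_i^T\|_{\op}$) does not give a constant for them; the paper instead uses the scalar $|u+s|/(s^2-ru)$ as the substitute reserve inside the same block-value analysis culminating in \Cref{lem:block-val-bound-RA}. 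Without these ingredients, the central claim of your plan is exactly what remains unproven.
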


The proof is deferred to \Cref{sec:R-decomposition}.
In particular, we will write an expanded expression of $M^{-1}$ and obtain a decomposition of $\cR$ (\Cref{prop:R-decomposition}).
Then, in \Cref{sec:R-analysis}, we prove tight spectral norm bounds for matrices in the decomposition, which then completes the proof of \Cref{lem:R-norm-bound}.

Combining \Cref{lem:M-invertible} and \cref{lem:R-norm-bound}  we can finish the proof of \Cref{thm:main-thm}.

\begin{proof}[Proof of \Cref{thm:main-thm}]
    The matrix $M$ (recall \Cref{eq:M-matrix}) is invertible due to \Cref{lem:M-invertible}, thus our candidate matrix $\Lambda = I_d - \cR$ matrix defined in \Cref{def:candidate-matrix} is well-defined.
    Furthermore, by \Cref{lem:R-norm-bound} we have that $\|\cR\|_{\op} <1 $.
    This proves that $\Lambda \succ 0$. 
\end{proof}
\section{Machinery for tight norm bounds of graph matrices}
\label{sec:overview-norm-bounds}
One of the main technical contributions of this paper is providing tight spectral norm bounds (up to constants per vertex/edge) for \emph{structured random matrices with correlated entries} (a.k.a.\ graph matrices).
We note that prior to this work, most known norm bounds for such matrices are only tight up to some logarithmic factors~\cite{AMP16}, while not much is known in terms of precise bounds without log factors except for several specific cases (see e.g.~\cite{Tao12}).

\subsection{Preliminaries}
\label{sec:graphical-matrices}

We first give a lightweight introduction to the theory of graph matrices.
For interested readers who seek a thorough introduction or a more formal treatment, we refer them to its origin in a sequence of works in Sum-of-Squares lower bounds \cite{BHK+19, AMP16}.
We will follow the notations used in \cite{AMP16}.
Throughout this section, we assume that there is an underlying (random) input matrix $G$ and a Fourier basis $\{\chi_t\}_{t\in\N}$.

We first define \emph{shapes}, which are representations of structured matrices whose entries depend on $G$.

\begin{definition}[Shape] A shape $\tau$ is a tuple $(V(\tau), U_\tau, V_\tau, E(\tau))$ associated with a (multi) graph $(V(\tau), E(\tau))$.
Each vertex in $V(\tau)$ is associated with a vertex-type that indicates the range of the labels for the particular vertex.
Each edge $e\in E(\tau)$ is also associated with a Fourier index $t(e) \in \N$.
Moreover, we have $U_\tau, V_\tau \subseteq V(\tau)$ as the left and right boundary of the shape.
\end{definition}

We remind the reader that $V_\tau$ should be distinguished from $V(\tau)$, where $V_\tau$ is the right boundary set, while $V(\tau)$ is the set of all vertices in the graph.

\Cref{fig:M_alpha_beta} show the shapes for matrices $M_\al$ and $M_\beta$ defined in \Cref{prop:M-decomposition}.
For these shapes, there are two vertex-types (square and circle).
The two ovals in each shape indicate the left and right boundaries $U_\tau$ and $V_\tau$.

We next describe how to associate a shape to a matrix (given the underlying matrix $G$).

\begin{figure}[ht!]
    \centering
    \begin{subfigure}[b]{0.25\textwidth}
        \includegraphics[width=\textwidth]{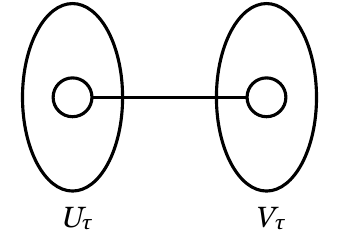}
        \caption{$\mathsf{GOE}$, zero diagonal.}
        \label{fig:GOE}
    \end{subfigure}
    \quad
    \begin{subfigure}[b]{0.32\textwidth}
        \includegraphics[width=\textwidth]{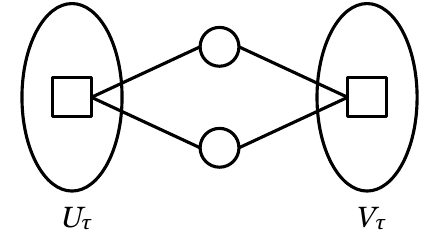}
        \caption{$M_{\alpha}$.}
        \label{fig:M_alpha}
    \end{subfigure}
    \quad
    \begin{subfigure}[b]{0.32\textwidth}
        \includegraphics[width=\textwidth]{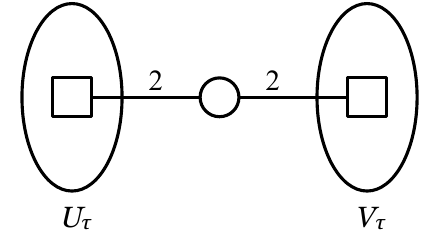}
        \caption{$M_{\beta}$.}
        \label{fig:M_beta}
    \end{subfigure}
    \captionsetup{width=.9\linewidth}
    \caption{Graph matrix representation of a $d\times d$ $\mathsf{GOE}$ matrix with zero diagonal, and the $m \times m$ matrices $M_{\alpha}$ and $M_{\beta}$ as defined in \Cref{prop:M-decomposition}.
    Square vertices take labels in $[m]$ and circle vertices take labels in $[d]$.
    The two ovals indicate the left and right boundaries $U_\tau, V_\tau$ of the shapes.
    If an edge $e$ is not labeled with an index, then $t(e) = 1$ by default.}
    \label{fig:M_alpha_beta}
\end{figure}


\begin{definition}[Mapping of a shape] Given a shape $\tau$, we call a function $\sigma : V(\tau) \to \N$ a mapping of the shape if \begin{enumerate}
    \item $\sigma$ assigns a label for each vertex according to its specified vertex-type;
    \item $\sigma$ is an injective mapping for vertices of the same type.
\end{enumerate}
    
\end{definition}

\begin{definition}[Graph matrix for shape] \label{def:graph-matrix-for-shape}
    Given a shape $\tau$, we define its graphical matrix $M_{\tau}$ to be the matrix indexed by all possible boundary labelings of $S, T$, and for each of its entry, we define
    \[ 
    M_{\tau}[S, T] = \sum_{\substack{\sigma: V(\tau) \to \N \\ \sigma(U_\tau) = S,\ \sigma(V_\tau) = T }} \prod_{e\in E(\tau)}\chi_{t(e)}(G[\sigma(e)]) \mper
    \]
\end{definition}

Observe that for each entry $M_{\tau}[S,T]$, since $\sigma$ must map $U_\tau$ and $V_\tau$ to $S$ and $T$,
$M_{\tau}[S,T]$ is simply a sum over labelings of the ``middle'' vertices $V(\tau) \setminus (U_\tau \cup V_\tau)$.
Take \Cref{fig:M_alpha_beta} for example. Suppose $G\in \R^{m \times d}$ and square and circle vertices take labels in $[m]$ and $[d]$ respectively, then we can write out the entries of the matrix: for $i \neq j\in [m]$,
\begin{align*}
    M_{\al}[i,j] &= \sum_{a\neq b\in [d]} \chi_1(G[i,a]) \cdot \chi_1(G[i,b]) \cdot \chi_1(G[j,a]) \cdot \chi_1(G[j,b]) \mcom \\
    M_{\beta}[i,j] &= \sum_{a\in [d]} \chi_2(G[i,a]) \cdot \chi_2(G[j,a]) \mper
\end{align*}

Note also that since $\sigma$ must be injective for vertices of the same type and $U_\tau \neq V_\tau$ in both examples, there is no mapping such that $\sigma(U_\tau) = \sigma(V_\tau)$.
Thus, by \Cref{def:graph-matrix-for-shape}, both matrices have zeros on the diagonal.


\paragraph{Adaptation to our setting} The above is a general introduction for graph matrices.
In this work, we specialize to the following setting:
\begin{itemize}
    \item $G \in \R^{m\times d}$ is a random Gaussian matrix whose rows are $v_1,\dots, v_m \sim \calN(0, \frac{1}{d} I_d)$.
    \item The Fourier characters $\{\chi_t\}_{t\in\N}$ are the (scaled) Hermite polynomials.
    \item For all graph matrices that arise in our analysis,
    \begin{itemize}
        \item $|S| = |T| = 1$,
        \item There are two vertex-types: square vertices take labels in $[m]$ and circle vertices take labels in $[d]$.
    \end{itemize}
\end{itemize}

\begin{remark}
	For our technical analysis, we may also employ this machinery on a broader range of graph matrices for shape in which we relax the local injectivity condition within each block. That said, for illustration purpose, it suffices to consider the vanilla setting of graph matrix.
\end{remark}
\begin{definition}[$D_V$ size constraint]
For each graph matrix $\tau$ considered in this work, let $D_V$ be the size constraint such that $|V(\tau)|\leq D_V$. 
\end{definition}
For concreteness, we will take $D_V = \polylog(d)$ throughout this work.

%
%
%
\paragraph{Trace moment method}

For all our norm bounds, we will use the trace moment method: for any graph matrix $M_{\tau}$ with underlying random matrix $G$ and any $q \in \N$,
\begin{align*}
    \E \|M_{\tau} \|_{\op}^{2q} &\leq \E \tr\Paren{(M_{\tau} M_{\tau}^T)^{q}}
    = \E \sum_{\substack{S_1, T_1, S_2, T_2,\dots S_{q-1}, T_{q-1}: \\ \text{boundaries}} } M_{\tau}[S_1,T_1] M_{\tau}^T[T_1,S_2] \cdots M_{\tau}^T[T_{q-1}, S_1] \mper
\end{align*}
where the expectation is taken over $G$.

Notice that the summation is over \emph{closed walks} across the boundaries: $S_1 \to T_1 \to S_2 \to T_2\to \dots \to S_1$, where $S_1, T_1, \dots$ are boundary labelings of $M_{\tau}$. In particular, the walk is consist of $2q$-steps of ``block walk'', with the $(2t-1)$-th step across a block described by $M_{\tau}$ and the $(2t)$-th step across a block described by $M_{\tau}^T$.

The crucial observation is that after taking expectation, all closed walks must walk on each labeled edge (i.e., Fourier character) an \emph{even} number of times, since all odd moments of the Fourier characters are zero. Therefore, bounding the matrix norm is reduced to bounding the contribution of all such walks.
\begin{align*}
    \E \|M_{\tau}\|_{\op}^{2q}
    \leq \sum_{\substack{\calP: \text{ closed walk} }} \prod_{e\in E(\calP)}  \E\Brac{\chi_{t(e)}(G[e])^{\mul_\calP(e)}} \mcom
    \numberthis \label{eq:M-beta-trace}
\end{align*}

where $E(\calP)$ denotes the set of labeled edges used by the walk $\calP$, $\mul_\calP(e)$ denotes the number of times $e$ appears in the walk, and $t(e)$ denotes the Fourier index (with slight abuse of notation).

\begin{remark}[Labeled vertex/edge] \label{rem:labeled-vertices-edges}
    We remind the reader not to confuse vertices/edges in the walk with vertices/edges in the shape.
    The vertices in a walk are ``labeled'' by elements in $[m]$ or $[d]$ (depending on the vertex-type).
    Similarly, each edge $e\in E(\calP)$ in a walk is labeled by an element in $[m] \times [d]$.
    We will use the terms ``labeled vertex'' and ``labeled edge'' unless it is clear from context.
\end{remark}

\subsection{Global bounds via a local analysis}
\label{sec:overview-local-analysis}


Observe that \Cref{eq:M-beta-trace} is a weighted sum of closed walks of length $2q$.
To obtain an upper bound, the standard approach is to specify an efficient \emph{encoding scheme} that uniquely identifies each closed walk, and then upper bound the total number of such encodings.

We begin by defining a step-labeling --- a categorization of each step in the closed walk.

\begin{definition}[Step-labeling]
\label{def:step-labeling}
For each step throughout the walk, we assign it the following label,\begin{enumerate}
		\item $F$ (a fresh step): it uses a new labeled edge making the first appearance and leads to a destination not seen before;
		\item $S$ (a surprise step): it uses a new labeled edge to arrive at a vertex previously visited in the walk;
		\item $H$ (a high-mul step): it uses a labeled edge that appears before, and the edge is making a middle appearance (i.e., it will appear again in the subsequent walk);
		\item $R$ (a return step): it uses a labeled edge that appears before, and the edge is making its last appearance.
	\end{enumerate}
	Analogously, for any shape $\tau$ , we call $\calL_\tau :E(\tau)\rightarrow\{F,R,S,H\}
$ a \emph{step-labeling} of the block. The subscript $\tau$ is ignored when it is clear.
\end{definition}

We note that the terms ``fresh'', ``high-mul'' and ``return'' are adopted from the GOE matrix analysis in \cite{Tao12}.
Next, to obtain a final bound for \Cref{eq:M-beta-trace}, we consider two \emph{factors} for each step (which depend on the step-label):
\begin{enumerate}
    \item \textbf{Vertex factor}: a combinatorial factor that specifies the destination of the step;
    \item \textbf{Edge factor}: an analytical factor from the edge which accounts for the $\E[\chi_{t(e)}(G[e])^{\mul(e)}]$ term in \Cref{eq:M-beta-trace}.
\end{enumerate}

For example, a vertex factor for an $F$ step to a circle vertex can be $d$, an upper bound on the number of possible destinations.
One can think of vertex factors as the information needed for a \emph{decoder} to complete a closed walk.
Essentially, the step-labeling and appropriate vertex factors should uniquely identify a closed walk, and combined with edge factors, we can obtain an upper bound for \Cref{eq:M-beta-trace}.

We note that the approach stated above is a \emph{global} encoding scheme.
One may proceed via a global analysis --- carefully bounding the number of step-labelings allowed (e.g., using the fact that the $F$ and $R$ steps must form a Dyck word~\cite{Tao12}), and then combining all vertex and edge factors to obtain a final bound.
However, to get tight norm bounds for complicated graph matrices (like $M_\alpha$), the global analysis becomes unwieldy.

\paragraph{Local analysis}
One of our main insights is to use a \emph{local} analysis.
We now give a high-level overview of our strategy while deferring the specific details of our vertex/edge factor assignment scheme to subsequent sections.
Recall that a closed walk consists of ``block-steps'' described by the shape $\tau$.
Thus, we treat each walk as a ``block walk'' and bound the contributions of a walk block by block.
This prompts us to bound the contribution of the walk at a given block-step to the final trace in \cref{eq:M-beta-trace} by
\[ 
\vtxcost \cdot \edgeval \leq B_q(\tau)
\]
where $B_q(\tau)$ is some desired upper bound that depends on the vertex/edge factor assignment scheme.
We define it formally in the following.


\begin{definition}[Block value function] \label{def:block-value-function}
Fix $q\in \N$ and a shape $\tau$. For any vertex/edge factor assignment scheme, we call $B_q(\tau)$ a valid block-value function for $\tau$ of the given scheme if   \[ 
\E\left[\tr \left((M_{\tau} M_{\tau}^T)^{q} \right)\right] \leq (\text{matrix dimension}) \cdot B_q(\tau)^{2q} \mcom
\]
and for each block-step $\mathsf{BlockStep}_i$ throughout the walk, \[ 
 \vtxcost(\mathsf{BlockStep}_i) \cdot \edgeval(\mathsf{BlockStep}_i) \leq B_q(\tau)\,.
\]
\end{definition}
We point out that the block-value function $B$ should be considered as a function of both the shape $\tau$ and the length of the walk $q$ (we will drop the subscript when it is clear throughout this work), and it also depends on the assignment scheme.
Thus, our task is to find a vertex/edge factor assignment scheme such that $B_q(\tau)$ is as small as possible.
Moreover, the $\text{matrix dimension}$, which is at most $\poly(d)$ in our case, is the factor that comes up in the start of the walk to specify the original vertex, and can be ignored as it is ultimately an $1+o(1)$ factor once we take a long enough walk.

Given \Cref{def:block-value-function}, the norm bound follows immediately from Markov's inequality.

\begin{proposition}  \label{prop:norm-bound-from-B}
    Let $M_{\tau}$ be a graph matrix with dimension $\poly(d)$, and let $q \geq \Omega(\log^2 d)$.
    Suppose $B_q(\tau)$ is a valid block-value function,
    Then, with probability $1-2^{-q/\log d}$,
    \begin{align*}
        \|M_{\tau}\|_{\op} \leq (1+o_d(1)) \cdot B_q(\tau) \mper
    \end{align*}
\end{proposition}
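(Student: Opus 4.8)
\textbf{Proof proposal for \Cref{prop:norm-bound-from-B}.}

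The plan is to combine the trace moment method with Markov's inequality, using the definition of a valid block-value function to control the $2q$-th moment of the operator norm. First I would invoke \Cref{def:block-value-function}, which by hypothesis gives
\[
  \E\left[\tr\left((M_\tau M_\tau^T)^q\right)\right] \leq (\text{matrix dimension}) \cdot B_q(\tau)^{2q} \leq \poly(d) \cdot B_q(\tau)^{2q}\mcom
\]
since $M_\tau$ has dimension $\poly(d)$. Combining this with the standard inequality $\|M_\tau\|_{\op}^{2q} \leq \tr\left((M_\tau M_\tau^T)^q\right)$ (the sum of the $2q$-th powers of the singular values dominates the largest one), we obtain $\E\|M_\tau\|_{\op}^{2q} \leq \poly(d)\cdot B_q(\tau)^{2q}$.

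Next I would apply Markov's inequality to the nonnegative random variable $\|M_\tau\|_{\op}^{2q}$. For any threshold of the form $(1+\delta) B_q(\tau)$ with $\delta > 0$ to be chosen, we get
\[
  \Pr\left[\|M_\tau\|_{\op} \geq (1+\delta) B_q(\tau)\right]
  = \Pr\left[\|M_\tau\|_{\op}^{2q} \geq (1+\delta)^{2q} B_q(\tau)^{2q}\right]
  \leq \frac{\poly(d)}{(1+\delta)^{2q}}\mper
\]
The remaining task is purely a matter of choosing $\delta$ so that both $(1+\delta) = 1+o_d(1)$ and the failure probability is at most $2^{-q/\log d}$. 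Since $q \geq \Omega(\log^2 d)$, taking $\delta = \Theta(1/\log d)$ makes $(1+\delta)^{2q} \geq \exp(\Omega(q/\log d)) \geq \exp(\Omega(\log d))$, which beats the $\poly(d) = \exp(O(\log d))$ numerator provided the constant in the exponent is large enough (equivalently, one may further boost by a constant factor in $q$, absorbed into the $\Omega(\cdot)$). A short computation confirms that $\poly(d)\cdot(1+\delta)^{-2q} \leq 2^{-q/\log d}$ for this choice and $d$ large, and $1+\delta = 1+o_d(1)$ as required. This yields the claimed high-probability bound $\|M_\tau\|_{\op} \leq (1+o_d(1)) B_q(\tau)$.

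I do not expect any genuine obstacle here: this proposition is a bookkeeping lemma that packages the trace-moment-to-norm-bound conversion once the hard combinatorial work (constructing a valid block-value function with small $B_q(\tau)$) has been done. The only mild subtlety is tracking that the $\poly(d)$ prefactor — which originates from the ``matrix dimension'' factor that pays for the choice of starting vertex in the closed walk — is genuinely subexponential in $q$, so that it is swallowed by the $(1+\delta)^{2q}$ gain; this is exactly why the hypothesis requires $q \geq \Omega(\log^2 d)$ rather than merely $q \to \infty$. If one wanted the sharper $2^{-d^\eps}$ failure probability mentioned in the remark after \Cref{thm:main-thm}, one would instead take $q$ to be a small polynomial in $d$, but that refinement is not needed for the statement as written.
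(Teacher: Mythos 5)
Your proposal is correct and follows essentially the same route as the paper: bound $\E\|M_\tau\|_{\op}^{2q}$ by the trace via the block-value definition, apply Markov with threshold $(1+\eps)B_q(\tau)$ for $\eps = \Theta(1/\log d)$, and use $q \geq \Omega(\log^2 d)$ to absorb the $\poly(d)$ prefactor. Nothing is missing.
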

\begin{proof}
    We apply Markov's inequality: for any $\eps > 0$,
    \begin{align*}
        \Pr\Brac{\|M_{\tau}\|_{\op} > (1+\eps) B_q(\tau) } 
        &\leq \Pr\Brac{\tr\Paren{(M_{\tau} M_{\tau}^T)^{q}} > (1+\eps)^{2q} B_q(\tau)^{2q} }  \\
        &\leq (1+\eps)^{-2q} \poly(d) \leq e^{-2\eps q} \poly(d)
    \end{align*}
    since $\E[\tr((M_{\tau}M_{\tau}^T)^q)] \leq \poly(d) \cdot B_q(\tau)^{2q}$ by \Cref{def:block-value-function}.
    Setting $\eps = \frac{1}{\log d}$ and $q \geq \Omega(\log^2 d)$, we have that the probability is at most $2^{-q / \log d}$.
    Thus, we can conclude that $\|M_{\tau}\|_{\op} \leq (1+o_d(1))\cdot B_q(\tau)$ with probability $1 - 2^{-q/
    \log d}$.
\end{proof}

The next proposition shows that we can easily obtain a valid $B_q(\tau)$ once we have an appropriate factor assignment scheme.

\begin{proposition} \label{prop:sum-of-labelings}
    For any graph matrix $M_\tau$ and any valid factor assignment scheme,
    \begin{align*}
		B_q(\tau) = \sum_{\calL : \text{ step-labelings for } E(\tau)} \vtxcost(\calL) \cdot \edgeval(\calL)
	\end{align*}
    is a valid block-value function for $\tau$.
\end{proposition}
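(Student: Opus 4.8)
The plan is to show that summing over all possible step-labelings of the shape's edges yields a valid block-value function, i.e., that the two defining inequalities of \Cref{def:block-value-function} hold when $B_q(\tau)$ is defined as in the statement. The key conceptual point is that the trace expansion in \Cref{eq:M-beta-trace} is a sum over closed block-walks of length $2q$, and every such walk induces, on each of its $2q$ block-steps, a step-labeling of $E(\tau)$ (by \Cref{def:step-labeling}, each labeled edge traversed in that block-step gets one of $\{F,S,H,R\}$ according to its status in the global walk). So I would first argue that the total contribution of a single closed walk factors across its $2q$ block-steps: the product $\prod_{e \in E(\calP)} \E[\chi_{t(e)}(G[e])^{\mul_\calP(e)}]$ can be distributed among the block-steps so that each block-step carries exactly its $\edgeval$, and the combinatorial choices made by the decoder at that block-step are counted by its $\vtxcost$. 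This is precisely the ``local'' accounting set up in \Cref{sec:overview-local-analysis}.

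Next I would make the encoding/decoding argument explicit: fix a factor assignment scheme that is \emph{valid} in the sense that (i) the step-labeling together with the vertex factors uniquely reconstructs the walk, and (ii) the edge factors correctly upper bound the analytic moments. Then the number-of-encodings bound gives
\[
\E\left[\tr\left((M_\tau M_\tau^T)^q\right)\right] \;\le\; (\text{matrix dimension}) \cdot \sum_{\calP} \prod_{i=1}^{2q} \vtxcost(\mathsf{BlockStep}_i) \cdot \edgeval(\mathsf{BlockStep}_i),
\]
where the leading matrix-dimension factor accounts for the choice of the starting boundary vertex. Now I would bound this by passing to the level of step-labelings: each block-step's contribution $\vtxcost \cdot \edgeval$ depends only on the step-labeling $\calL$ of that block-step together with the ``state'' inherited from earlier steps, and the key observation is that once we sum over \emph{all} step-labelings at each block-step, the per-block contribution is at most $\sum_{\calL} \vtxcost(\calL)\cdot \edgeval(\calL) = B_q(\tau)$, uniformly over the state. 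Multiplying over the $2q$ block-steps gives $\E[\tr((M_\tau M_\tau^T)^q)] \le (\text{matrix dimension}) \cdot B_q(\tau)^{2q}$, which is the first required inequality; and the second required inequality, $\vtxcost(\mathsf{BlockStep}_i)\cdot \edgeval(\mathsf{BlockStep}_i) \le B_q(\tau)$, is immediate since each individual step-labeling's contribution is one nonnegative term in the sum defining $B_q(\tau)$.

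I expect the main obstacle — and the place where the proof really has content rather than bookkeeping — to be justifying that summing over step-labelings at each block-step independently is legitimate, i.e., that the per-block contribution is genuinely upper-bounded by $\sum_{\calL}\vtxcost(\calL)\edgeval(\calL)$ regardless of what happened in earlier blocks. The subtlety is that $\vtxcost$ at a given block-step (e.g., the cost of an $R$ step that closes one of several open edges, or an $S$ step landing on a previously-seen vertex) depends on the walk's history, and one must be careful that the assignment scheme's bounds are stated per-step in a history-independent way (this is exactly why the scheme must be declared ``valid'' with step-local guarantees, and why \Cref{def:block-value-function} builds the per-block inequality into the definition). Given a valid scheme this is definitional, so the cleanest route is to phrase the proof as: the scheme validity gives a step-local encoding bound, those bounds multiply along the walk, and resumming each factor over step-labelings yields $B_q(\tau)^{2q}$. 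I would keep the argument short, citing \Cref{def:block-value-function} and the fixed factor assignment scheme, since the hard technical work lives in \emph{constructing} such a scheme (done in later sections), not in this combining step.
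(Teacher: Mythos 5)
Your proposal is correct and follows essentially the same route as the paper: the second requirement of \Cref{def:block-value-function} is immediate since each step-labeling's contribution is one nonnegative term in the defining sum, and the first follows by bounding the trace by the matrix dimension times the sum over all $2q$-tuples of step-labelings of the product of per-block factors, which factors as $B_q(\tau)^{2q}$. The history-independence subtlety you flag is exactly what the paper absorbs into the validity of the factor assignment scheme, so your extra discussion elaborates rather than diverges from the paper's (terser) argument.
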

\begin{proof}
    It is clear that the second requirement in \Cref{def:block-value-function} is satisfied.
    For the first requirement, observe that the trace can be bounded by the matrix dimension (specifying the start of the walk) times
    \begin{equation*}
        \sum_{\substack{\calL_1,\dots, \calL_{2q}: \\ \text{step-labelings for $E(\tau)$}}} \prod_{i=1}^{2q} \vtxcost(\calL_i) \cdot \edgeval(\calL_i)
        \leq \Paren{ \sum_{\calL:\text{step-labelings for } E(\tau)} \vtxcost(\calL) \cdot \edgeval(\calL) }^{2q} \mper
        \qedhere
    \end{equation*}
\end{proof}

With this set-up, the main task is then to find an appropriate vertex/edge factor assignment scheme and obtain a good upper bound on $B_q(\tau)$.

 \subsection{Vertex factor assignment scheme}
 \label{sec:vertex-factor-scheme}
 
We now proceed to bound the vertex factors for each step-label.
We note that in this section, ``vertices'' refer to ``labeled vertices'' in the walk (having labels in $[m]$ or $[d]$; recall \Cref{rem:labeled-vertices-edges}).
First, we define the weight of a square (resp.\ circle) vertex to be $m$ (resp.\ $d$), since we need an element in $[m]$ (resp.\ $[d]$) to specify which vertex to go to in the walk.

We first show a ``naive'' vertex factor assignment scheme.
In the following scheme, we use a \emph{potential unforced return} factor, denoted $\pur$, to specify the destination of any $R$ step.
We will defer the specific details of $\pur$ to \Cref{sec:resolving-confusion}.
\begin{mdframed}[frametitle = {Vanilla vertex factor assignment scheme} ]
\begin{enumerate}
    \item For each vertex $i$ that first appears via an $F$ step, a label in $\wt(i)$ is required;
    \item For each vertex $i$ that appears beyond the first time:
    \begin{itemize}
        \item If it is arrived via an $R$ step, the destination may need to be specified, and this is captured by the $\pur$ factor. 
        \item If it is \emph{not} arrived via an $R$ step, then it must be an $S$ or $H$ step.
        A vertex cost in $2q\cdot D_V$ is sufficient to identify the destination, where we recall $2q$ is the length of our walk, and $D_V$ the size upper bound of each block.
    \end{itemize}
\end{enumerate}
\end{mdframed}

The first thing to check is that this scheme combined with an step-labeling uniquely identifies a closed walk (given the start of the walk).
This is immediate for $F$ and $R$ steps by definition.
For $S$ and $H$ steps, since the destination is visited before in the walk, $2q \cdot D_V$ is sufficient as it is an upper bound on the number of vertices in the walk.

A potential complication with analyzing the above assignment scheme directly is that it exhibits a significant difference in the vertex factors.
For example, consider a vertex that appears only twice in the walk on a tree. Its first appearance requires a label in $[n]$, while its subsequent appearance does not require any cost if it is reached using an $R$ step because backtracking from a tree is fixed (since there is only one parent).
This disparity can result in a very loose upper bound for the trace when applying \Cref{prop:sum-of-labelings}; in fact, the norm bound for $M_\tau$ obtained in this manner is equivalent to using the naive row-sum bound.

\paragraph{Redistribution}
One of our main technical insights is to split the factors such that both first and last appearance contributes a factor of comparable magnitude; we call this \emph{redistribution}.

We first formally define ``appearance'' in a block-step to clarify our terminology,
\begin{definition}[Vertex appearance in block-step]  \label{def:vertex-appearance}
    Each labeled vertex appearance can be ``first'', ``middle'' and ``last''.
    Moreover, each vertex on the block-step boundary ($U_\tau$ or $V_\tau$) appears in both adjacent blocks.
\end{definition}

For example, suppose a vertex first appears in the right-boundary of block $i$ and last appears in the left-boundary of block $j$, then it will make middle appearances in the left-boundary of block $i+1$ and right-boundary of block $j-1$ as well.

We are now ready to introduce the following vertex-factor assignment scheme with redistribution that assigns vertex-factor to each vertex's appearance to handle the disparity.
\begin{mdframed}[frametitle = {Vertex factor assignment scheme with redistribution } ]
\begin{enumerate}
    \item For each vertex $i$ that makes its first appearance, assign a cost of $\sqrt{\wt(i)}$;
    \item For any vertex's middle appearance, if it is not arrived at via an $R$ step, assign a cost of $2q\cdot D_V $  (where we recall $2q$ is the length of our walk, and $D_V$ the size constraint of each block);
    \item For any vertex's middle appearance, if it is arrived at via an $R$ step, its cost is captured by $\pur$;
    \item For each vertex $i$ that makes its last appearance, assign a cost of $\sqrt{\wt(i)}$ that serves as a \emph{backpay}.
\end{enumerate}
\end{mdframed}

%

\paragraph{Deducing vertex factor from local step-labeling}
As presented, the vertex factor assignment scheme requires knowing which vertex is making first/middle/last appearance. We further show that the vertex appearances, or more accurately, an upper bound of the vertex factors, can be deduced by a given step-labeling of the block. Fix traversal direction from $U$ to $V$,
\begin{mdframed}[frametitle = {Localized vertex factor assignment from step-labeling }]
	\begin{enumerate}
		\item For any vertex $v$ that is on the left-boundary $U$, it cannot be making the first appearance since it necessarily appears in the previous block;
		\item For any vertex $v$ that is on the right-boundary $V$, it cannot be making the last appearance since it necessarily appears in the subsequent block;
		\item For any vertex $v$ reached via some $S/R/H$ step, it cannot be making its first appearance;
		\item For any vertex $v$ that incident to some $F/S/H$ step, it cannot be making its last appearance since the edge necessarily appears again.
	\end{enumerate}
\end{mdframed}
The first two points are due to \Cref{def:vertex-appearance}. The last point is because each labeled edge (i.e., Fourier character) must be traversed by an $R$ step to close it.

\subsection{Bounding edge-factors}
\label{sec:edge-factor-scheme}

To bound the contribution of the walks, we need to consider factors coming from the edges traversed by the walk.
Recall from \Cref{eq:M-beta-trace} that each edge $e$ in a closed walk $P$ gets a factor $\E[\chi_{t(e)}^{\mul_P(e)}]$, where $t(e)$ is the Fourier index associated with the edge.

In our case, the Fourier characters are the scaled Hermite polynomials.
Recall that we assume that our vectors are sampled as $v_i\sim \calN(0, \frac{1}{d}I_d)$.
Thus, we define the polynomials $\{h_t\}_{t\in\N}$ such that they are orthogonal and $\E_{x\sim \calN(0,1/d)}[h_t(x)^2] = t! \cdot d^{-t}$.
Specifically,
\begin{enumerate}
    \item $h_1(x) = x$,
    \item $h_2(x) = x^2 - \frac{1}{d}$.
\end{enumerate}
We first state the following bound on the moments of $h_t$, which follows directly from standard bounds on the moments of Hermite polynomials:

\begin{fact}[Moments of Hermite polynomials]  \label{fact:hermite-moments}
    Let $d\in \N$. For any $t\in \N$ and even $k \in \N$,
    \[
        \E_{x\sim \calN(0,1/d)} \Brac{h_t(x)^k} \leq \frac{1}{d^{kt/2}} (k-1)^{kt/2} (t!)^{k/2} \leq (t!)^{k/2} \Paren{\frac{k}{d}}^{kt/2} \mper
    \]
\end{fact}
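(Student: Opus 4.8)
The statement to prove is \Cref{fact:hermite-moments}, a moment bound on the scaled Hermite polynomials $h_t$ under $x \sim \calN(0, 1/d)$.

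\medskip

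\textbf{Proof plan.} The plan is to reduce everything to the standard (probabilist's) Hermite polynomials $\mathrm{He}_t$ under the unit-variance Gaussian $\calN(0,1)$ via a scaling substitution, and then invoke a known hypercontractivity-type moment bound for $\mathrm{He}_t$. First I would record the relationship between $h_t$ and $\mathrm{He}_t$: writing $x = y/\sqrt d$ with $y \sim \calN(0,1)$, one has $h_t(x) = d^{-t/2}\,\mathrm{He}_t(y)$, since $h_t$ is (by construction) the degree-$t$ orthogonal polynomial with $\E_{x\sim\calN(0,1/d)}[h_t(x)^2] = t!\,d^{-t}$, matching $d^{-t}\E_{y\sim\calN(0,1)}[\mathrm{He}_t(y)^2] = d^{-t}\cdot t!$. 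Hence
\[
    \E_{x\sim\calN(0,1/d)}\bigl[h_t(x)^k\bigr] = d^{-kt/2}\,\E_{y\sim\calN(0,1)}\bigl[\mathrm{He}_t(y)^k\bigr] .
\]
So the $d$-dependence is entirely explicit and it remains to bound the $k$-th moment of $\mathrm{He}_t$ under the standard Gaussian by $(k-1)^{kt/2}(t!)^{k/2}$.

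\medskip

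\textbf{Bounding $\E[\mathrm{He}_t(y)^k]$.} For the standard-Gaussian moment bound I would use Gaussian hypercontractivity (the $(p,2)$-hypercontractive inequality for degree-$t$ polynomials): for a degree-$t$ polynomial $f$ and even $k \ge 2$, $\|f\|_k \le (k-1)^{t/2}\|f\|_2$. Applying this with $f = \mathrm{He}_t$ and using $\|\mathrm{He}_t\|_2^2 = t!$ gives
\[
    \E_{y\sim\calN(0,1)}\bigl[\mathrm{He}_t(y)^k\bigr] = \|\mathrm{He}_t\|_k^k \le (k-1)^{kt/2}\,\|\mathrm{He}_t\|_2^k = (k-1)^{kt/2}\,(t!)^{k/2}.
\]
Substituting back yields the first inequality $\E_{x\sim\calN(0,1/d)}[h_t(x)^k] \le d^{-kt/2}(k-1)^{kt/2}(t!)^{k/2}$. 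The second inequality, $(t!)^{k/2}(k-1)^{kt/2}d^{-kt/2} \le (t!)^{k/2}(k/d)^{kt/2}$, is immediate from $k-1 \le k$, which completes the proof. Alternatively, if one prefers to avoid citing hypercontractivity, the bound $\E[\mathrm{He}_t(y)^k] \le (k-1)^{kt/2}(t!)^{k/2}$ can be derived combinatorially: expand $\E[\prod_{i=1}^k \mathrm{He}_t(y)]$ as a sum over perfect matchings of the $kt$ ``legs'' with no edge internal to a single factor (by orthogonality of Hermite polynomials, $\E[\prod \mathrm{He}_{t_i}] $ counts such matchings), and crudely bound the number of such matchings by $(kt-1)!! \le$ (number of ways to pair legs) which one can organize to give the stated bound; I expect the hypercontractivity route to be cleaner.

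\medskip

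\textbf{Main obstacle.} There is no serious obstacle here — this is a routine lemma. The only point requiring a little care is getting the normalization of $h_t$ versus $\mathrm{He}_t$ exactly right (the factors of $d^{-t/2}$ versus $d^{-t}$, and that the squared $L^2$ norm is $t!$ rather than, say, $t!/\sqrt{2\pi}$ or similar), and then quoting the correct constant in the hypercontractive inequality ($(k-1)^{t/2}$ for the $L^k$-vs-$L^2$ ratio of a degree-$t$ polynomial). Once those normalizations are pinned down, both inequalities in the statement drop out in two lines.
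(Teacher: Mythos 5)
Your proposal is correct, and it is essentially the argument the paper has in mind: the paper states \Cref{fact:hermite-moments} without proof as "following directly from standard bounds on the moments of Hermite polynomials," and your route — checking that $h_t(x) = d^{-t/2}\mathrm{He}_t(\sqrt{d}\,x)$ so that $\E_{x\sim\calN(0,1/d)}[h_t(x)^k] = d^{-kt/2}\,\E_{y\sim\calN(0,1)}[\mathrm{He}_t(y)^k]$, and then applying Gaussian hypercontractivity $\|f\|_k \le (k-1)^{t/2}\|f\|_2$ to the degree-$t$ polynomial $\mathrm{He}_t$ with $\|\mathrm{He}_t\|_2^2 = t!$ — is precisely that standard bound, with the correct normalizations and the trivial step $k-1 \le k$ for the second inequality. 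Nothing further is needed.
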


For now, we consider matrices that either contain only $h_1$ or only $h_2$ edges (the edge factors for graph matrices with ``mixed'' edges will be handled in \Cref{sec:further-setup-edge-factors}).
The following is our edge-factor assignment scheme to account for contributions from the Fourier characters.


\begin{mdframed}[frametitle = {Edge-factor assignment scheme } ]
	For an $h_1$ edge,\begin{enumerate}
		\item $F/S$: assign a factor of $\frac{1}{\sqrt{d}}$ for its first appearance;
		\item $H$: assign a factor of $\frac{2q}{\sqrt{d}}$ for its middle appearance;
		\item $R$: assign a factor of $\frac{1}{\sqrt{d}}$ for its last appearance.
	\end{enumerate}
	For an $h_2$ edge, \begin{enumerate}
		\item $F/S$: assign a factor of $\frac{\sqrt{2}}{d}$ for its first appearance (alternatively, we can view a single $h_2$ edge as two edge-copies of $h_1$ and assign each a factor of $\frac{\sqrt{2}}{\sqrt{d}}$, which is a valid upper bound);
		\item $H$: assign a factor of $\frac{8q^2}{d}$ for its middle appearance;
		\item  $R$: assign a factor of $\frac{\sqrt{2}}{d}$ for its last appearance (alternatively, we can view a single $h_2$ edge as two edge-copies of $h_1$ and assign each a factor of $\frac{\sqrt{2}}{\sqrt{d}}$ which is a valid upper bound).
	\end{enumerate}
\end{mdframed}

\begin{proposition} \label{prop:edgeval-pure-edges}
    The above scheme correctly accounts for the edge factors from $h_1$ and $h_2$ edges.
\end{proposition}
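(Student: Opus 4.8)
The plan is to verify, case by case on the step-label, that the assigned edge factor is a valid upper bound on the contribution of each edge-appearance to the product $\prod_{e\in E(\calP)} \E[\chi_{t(e)}(G[e])^{\mul_\calP(e)}]$ appearing in \Cref{eq:M-beta-trace}. The key point is a \emph{distributional} one: each labeled edge $e$ that appears in the walk appears with some even multiplicity $\mul_\calP(e) = k_e \geq 2$, so its total contribution is $\E_{x\sim\calN(0,1/d)}[h_t(x)^{k_e}]$, and we must split this one analytic quantity across the $k_e$ appearances of $e$ in the walk (one $F$ or $S$ appearance for the first occurrence, one $R$ appearance for the last, and $k_e - 2$ intermediate $H$ appearances). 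So the real content is: the product of the per-appearance factors we assign to a given edge across all its appearances must dominate $\E[h_t(x)^{k_e}]$.

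First I would treat the $h_1$ case. By \Cref{fact:hermite-moments} with $t=1$ and $k = k_e$ (even), $\E_{x\sim\calN(0,1/d)}[h_1(x)^{k_e}] \leq (k_e - 1)^{k_e/2} d^{-k_e/2} \leq (2q)^{k_e/2} d^{-k_e/2}$ using $k_e \leq 2q$; more sharply $(k_e-1)^{k_e/2}\le (2q)^{k_e/2}$. The assigned factors multiply to $\frac{1}{\sqrt d}\cdot\left(\frac{2q}{\sqrt d}\right)^{k_e-2}\cdot\frac{1}{\sqrt d} = \frac{(2q)^{k_e-2}}{d^{k_e/2}}$. So it suffices that $(k_e-1)^{k_e/2} \leq (2q)^{k_e-2}$, which holds comfortably for $k_e\ge 2$ and $q$ large (the only borderline case $k_e=2$ gives $1 \le 1$, exactly tight, which is the source of the ``$F/R$ get $1/\sqrt d$, no $q$'' choice). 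For $h_2$ edges, I would invoke the alternative viewpoint explicitly stated in the scheme: an $h_2$ edge may be regarded as two parallel $h_1$ edge-copies, each inheriting the same step-label, each assigned $\frac{\sqrt2}{\sqrt d}$ on $F/R/S$ and $\frac{2q}{\sqrt d}$ on $H$ — this reproduces the stated $h_2$ factors $\frac{\sqrt2}{d}$ (product of two $\frac{\sqrt2}{\sqrt d}$) and $\frac{8q^2}{d}$ (product of two $\frac{2q}{\sqrt d}$, up to the constant). Then I would check directly against $\E_{x\sim\calN(0,1/d)}[h_2(x)^{k_e}] \leq (k_e-1)^{k_e} (2!)^{k_e/2} d^{-k_e} = 2^{k_e/2}(k_e-1)^{k_e} d^{-k_e}$ from \Cref{fact:hermite-moments} with $t=2$; the product of assigned factors is $\frac{\sqrt2}{d}\cdot\left(\frac{8q^2}{d}\right)^{k_e-2}\cdot\frac{\sqrt2}{d} = \frac{2\,(8q^2)^{k_e-2}}{d^{k_e}}$, and one checks $2^{k_e/2}(k_e-1)^{k_e} \leq 2\,(8q^2)^{k_e-2}$ for $k_e\geq 2$ and $q$ large, again tight at $k_e=2$.

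The one genuine subtlety — the main obstacle — is bookkeeping the \emph{boundary doubling} from \Cref{def:vertex-appearance}: an edge incident to a block-step boundary is shared between two adjacent blocks, so a single labeled edge $e$ of the overall closed walk may be ``seen'' by the local step-labelings of two blocks at once, and I must make sure that the per-block factor assignment does not under-count or double-count $e$'s appearances when the local labels are stitched together. Concretely, I would argue that the \emph{global} multiset of (edge, appearance-slot) pairs — first / $k_e-2$ middles / last, over all labeled edges $e$ in $\calP$ — is in bijection with the appearance-slots aggregated from the local step-labelings across all $2q$ blocks, with $F$ or $S$ contributing the first slot, $R$ the last, and $H$ a middle slot; hence the global product of edge factors equals the product over blocks of $\edgeval(\calL_i)$, and by the two case analyses above this dominates $\prod_e \E[h_{t(e)}(x)^{k_e}]$. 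Combined with \Cref{prop:sum-of-labelings} this is exactly the statement ``the scheme correctly accounts for the edge factors.'' I do not expect any step beyond this bookkeeping to require more than the elementary inequalities above.
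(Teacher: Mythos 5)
Your proposal is correct and takes essentially the same route as the paper's proof: split into the multiplicity-$2$ case, where the $F/R$ factors match $\E_{x\sim\calN(0,1/d)}[h_t(x)^2]$ exactly, and the multiplicity-$k>2$ case, handled via \Cref{fact:hermite-moments} together with $k\leq 2q$ (your stitching of local step-labels to global first/middle/last appearances is bookkeeping the paper leaves implicit, and is consistent with it). The only slip is cosmetic: two copies of $\frac{\sqrt{2}}{\sqrt{d}}$ multiply to $\frac{2}{d}$, not $\frac{\sqrt{2}}{d}$ (the two-copy view is only an upper bound on the stated $h_2$ factor), but your direct verification against the stated factors $\frac{\sqrt{2}}{d}$ and $\frac{8q^2}{d}$ is the operative one and it goes through.
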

\begin{proof}
    If an edge has multiplicity $2$ then it must be traversed by one $F/S$ step and one $R$ step.
    \begin{itemize}
        \item If it is an $h_1$ edge, then the scheme assigns a factor $\frac{1}{d}$, which equals $\E_{x\sim \calN(0,1/d)}[h_1(x)^2]$.
        \item If it is an $h_2$ edge, then the scheme assigns a factor $\frac{2}{d^2}$, which equals $\E_{x\sim \calN(0,1/d)}[h_2(x)^2]$.
    \end{itemize}
    For an edge with multiplicity $k > 2$, it must be traversed by one $F$ step (including $S$), one $R$ step and $k-2$ $H$ steps. Moreover, since $k$ is even and $2q$ is the length of the walk, we have $4 \leq k \leq 2q$.
    \begin{itemize}
        \item If it is an $h_1$ edge, then the scheme assigns a factor $\frac{1}{d} \cdot (\frac{2q}{\sqrt{d}})^{k-2} \geq d^{-k/2} (2q)^{k/2} \geq (\frac{k}{d})^{k/2}$.
        By \Cref{fact:hermite-moments}, it is an upper bound on $\E_{x\sim \calN(0,1/d)}[h_1(x)^k]$.
        \item If it is an $h_2$ edge, then the scheme assigns a factor $\frac{2}{d^2} \cdot (\frac{8q^2}{d})^{k-2} \geq d^{-k} 2^{k/2} (2q)^{k} \geq 2^{k/2} (\frac{k}{d})^{k}$.
        By \Cref{fact:hermite-moments}, it is an upper bound on $\E_{x\sim \calN(0,1/d)}[h_2(x)^k]$.
    \end{itemize}
    This shows that the edge factor assignment scheme above is correct.
\end{proof}

\subsection{Bounding return cost (\texorpdfstring{$\pur$}{Pur} factors)}
\label{sec:resolving-confusion}

In our vertex factor assignment scheme described in \Cref{sec:vertex-factor-scheme}, we use a \emph{potential unforced return} factor, denoted $\pur$, to specify the destination of any return ($R$) step.
Note that the term ``unforced return'' is adopted from \cite{Tao12} as well.
In this section, we complete the bound of vertex factors by bounding the $\pur$ factor.

For starters, we will define a potential function for each vertex at time $t$, which measures the number of returns $R$ pushed out from the particular vertex by time $t$ that may require a label in $2q \cdot D_V$. Notice that a label in $2q \cdot D_V$ is sufficient for any destination vertex arrived via an $R$ step because the vertex appears before; however, this may be a loose bound.

We observe the following: a label in $2q\cdot D_V$ may be spared if the vertex is incident to only one unclosed $F/S$ edge; we call this a \emph{forced} return.
Formally, we define a return step as \emph{unforced} if it does not fall into the above categories,
\begin{definition}[Unforced return]
We call a return ($R$) step an \emph{unforced return} if the source vertex is incident to more than $1$ (or $2$ in the case of a square vertex) unclosed edge.
\end{definition}
We now proceed to formalize the above two observations by introducing a potential function to help us bound the number of unforced returns from any given vertex throughout the walk. The number of unforced returns throughout the walk would then be immediately given once we sum over all vertices in the walk.
\begin{definition}[Potential-unforced-return factor $\pur$]
\label{def:overview-pur}
For any time $t$ and vertex $v$, let $\pur_t(v)$ be defined as the number of potential unforced return from $v$  throughout the walk until time $t$.
\end{definition}


\subsubsection{\texorpdfstring{$\pur$}{Pur} bound for circle vertices}

In our setting, each circle vertex pushes out at most $1$ edge during the walk, analogous to the case of typical adjacency matrix.  This serves as a starting point for our $\pur$ bound for circle vertices.


\begin{lemma}[Bounding $\pur_t$ for circle vertices] \label{lem:pur-circle}
    For any time $t$, suppose the walker is currently at a circle vertex $v$, then
    \begin{align*}
        \pur_t(v) &\leq \# \text{(R steps closed from $v$)} + \#\text{(unclosed edges incident to $v$ at time $t$)} - 1 \\
        &\leq 2\cdot s_t(v) +  h_t(v) \mcom
    \end{align*}
    where we define the following counter functions:
    \begin{enumerate}
        \item $s_t(v)$ is the number of $S$ steps arriving at $v$ by time $t$;
        \item $h_t(v)$ is the number of $H$ steps arriving at $v$ by time $t$.
    \end{enumerate}
    \end{lemma}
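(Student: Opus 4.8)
The plan is to prove the bound by induction on the sequence of visits to the circle vertex $v$, tracking the potential function as the walk progresses. The key structural fact about circle vertices in our setting is that each circle vertex pushes out at most one edge at a time (it behaves like a vertex in an adjacency-matrix walk, where the walk alternates between square and circle vertices and each circle vertex has exactly one "outgoing" slot). So I would first establish the first inequality, namely that
\[
\pur_t(v) \leq \#(\text{$R$ steps closed from }v) + \#(\text{unclosed edges incident to }v\text{ at time }t) - 1,
\]
directly from the definition of $\pur_t(v)$ as the count of potential unforced returns from $v$. Here the reasoning is: a return step out of $v$ is "forced" (and costs nothing) precisely when $v$ has a unique unclosed edge; so every time we are at $v$ with the intention of making an unforced return, we are either closing one of the currently-open edges or one that will later be opened and closed. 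The $-1$ accounts for the single edge that can always be presumed forced. Summing the ledger of edges opened and closed at $v$ gives the stated combinatorial bound.

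Next I would prove the second inequality, $\#(\text{$R$ steps closed from }v) + \#(\text{unclosed edges at }v) - 1 \leq 2 s_t(v) + h_t(v)$, by induction on time, mirroring the structure of the (commented-out) Lemma in the overview. I define the auxiliary quantity $\Phi_t(v) := \#(\text{$R$ steps closed from }v) + \#(\text{unclosed edges incident to }v) - 1$ and track how $\Phi$ changes each time the walk touches $v$. When $v$ is first created by an $F$ step, $\Phi = 0$ and $s_0(v) = h_0(v) = 0$, so the base case holds. Between consecutive visits, an outgoing step from $v$ either opens a new edge ($F$ or $S$ out of $v$, increasing the open-edge count by 1) or closes one ($R$ out of $v$, which increases the closed-from-$v$ count by 1 and decreases the open-edge count by 1, net zero change to $\Phi$), or uses an $H$ edge (no change). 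Then I case on how the walk \emph{arrives} back at $v$: arriving via $R$ decreases the open-edge count (net $\Phi$ decreases by 1 relative to what it would otherwise be); arriving via $S$ increases the open-edge count by 1 but also increments $s_t(v)$ by 1, and since we may have had one intervening outgoing $F/S$ step, $\Phi$ increases by at most 2 while $2s_t(v)$ increases by 2; arriving via $H$ increases $\Phi$ by at most 1 while $h_t(v)$ increases by 1. In each case the inductive inequality is preserved.

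The main obstacle, and the part requiring the most care, is the bookkeeping of outgoing steps between two consecutive arrivals at $v$: because a circle vertex can only hold one outgoing edge-slot in this alternating walk, one has to argue that at most one new edge is opened from $v$ between consecutive visits, and correctly attribute the $+1$ from an intervening $F$/$S$ out-step together with the $+1$ from the incoming $S$ step so that the total increment of $\Phi$ is bounded by $2$ (matching $\Delta(2s_t(v)) = 2$). I would want to be explicit that in our graph matrices the walk genuinely alternates between square and circle vertices (this is what the "$|S| = |T| = 1$, two vertex-types" adaptation guarantees), so that the single-edge-slot property holds and the circle-vertex analysis reduces exactly to the $\pur$ bound for GOE-type walks. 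Once the alternation and single-slot facts are pinned down, the induction is routine; the square-vertex case (where two edges can be open) is handled analogously in a later lemma and accounts for the "$2$ in the case of a square vertex" caveat in the definition of unforced return.
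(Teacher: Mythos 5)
Your proposal is correct and follows essentially the same route as the paper's proof: the first inequality is argued directly from the definition of $\pur$ with the $-1$ for the presumed-forced edge, and the second is the same induction on visits to $v$, tracking the potential $P_t(v)$ through the single outgoing step (at most one new edge opened, since a circle vertex pushes out at most one edge) and then casing on whether the arrival is an $R$, $S$, or $H$ step. The bookkeeping you describe ($+2$ for an $S$ arrival combined with an intervening out-step, $+1$ for $H$, nonpositive for $R$) matches the paper's casework exactly.
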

\begin{proof}
    We first prove the first inequality.
    The $R$ steps closed from $v$ may all be unforced returns, and the unclosed edges incident to $v$ may be closed by unforced returns in the future.
    Note that we have a $-1$ in the above bound because for each vertex we may by default assume the return is using a particular edge, hence at each time we know there is an edge presumed-to-be forced.

    We prove the second inequality by induction.
    Define $P_t(v) \coloneqq \# \text{(R steps closed from $v$)} + \#\text{(unclosed edges incident to $v$ at time $t$)} - 1$ for convenience.
    At the time when $v$ is first created by an $F$ step, $P_t(v) = 0$ (1 open edge minus $1$) and $s_t(v) = h_t(v) = 0$.

    At time $t$, suppose the last time $v$ was visited was at time $t' < t$, and suppose that the inequality holds true for $t'$.
    Note that at time $t'+1$, $P_{t'+1}(v) = P_{t'}(v)+1$ if a new edge was created by an $F$ or $N$ step leaving $v$, otherwise $P_{t'+1}(v) = P_{t'}(v)$ (for $R$ step it adds 1 to the number of closed edges closed from $v$, but decreases 1 open edge).
    On the other hand, $s_{t'}(v)$ and $h_{t'}(v)$ remain the same (we don't count out-going steps for $s_t(v), h_t(v)$).
    
    When we reach $v$ at time $t$, we case on the type of steps:
    \begin{itemize}
        \item Arriving by an $R$ step: the edge is now closed, but the $R$ step was not from $v$. So $P_t(v) = P_{t'+1}(v) - 1 \leq P_{t'}(v)$, while $s_t(v) = s_{t'}(v)$ and $h_t(v) = h_{t'}(v)$.
        
        \item Arriving by an $S$ step: the edge is new, so $P_t(v) = P_{t'+1}(v)+1 \leq P_{t'}(v) + 2$, and we have $s_t(v) = s_{t'}(v) +1$.

        \item Arriving by an $H$ step: $P_t(v) = P_{t'+1}(v) \leq P_{t'}(v) + 1$, and $h_t(v) = h_{t'}(v)+1$.
    \end{itemize}
    In all three cases, assuming $P_{t'}(v) \leq 2 \cdot s_{t'}(v) + h_{t'}(v)$, we have $P_t(v) \leq 2\cdot s_t(v) + h_t(v)$, completing the induction.
\end{proof}

\subsubsection{\texorpdfstring{$\pur$}{Pur} bound for square vertices}

The argument of \Cref{lem:pur-circle} does not apply well for vertices incident to multiple edges in a single step. In particular, this may happen for square vertices in $M_\al$ as each is arrived via $2$ edges and each pushes out $2$ edges (recall \Cref{fig:M_alpha_beta}).
This is not an issue for $M_{\beta}$, but we will treat square vertices in $M_{\beta}$ the same way to unify the analysis; in the context of $\pur$ for square vertices, one may think of $M_{\beta}$ as collapsing the two circle vertices in $M_{\alpha}$.

To handle this issue, we observe that it suffices for us to pay an extra cost of $[2]$ for each square vertex, which would allow us to further presume $2$ edges being forced. We then generalize the prior argument to capture this change.

\begin{lemma}[Bounding $\pur_t$ for square vertices] \label{lem:pur-square}
    For any time $t$, suppose the walker is currently at a square vertex $v$, then
    \begin{align*}
        \pur_t(v) &\leq \# \text{(R steps closed from $v$)} + \#\text{(unclosed edges incident to $v$ at time $t$)} - 2 \\
        &\leq 2(s_t(v) +  h_t(v)) \mper
    \end{align*}
    where $s_t(v)$ and $h_t(v)$ are the number of $S$ and $H$ steps arriving at $v$ by time $t$, respectively.
\end{lemma}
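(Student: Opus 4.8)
The plan is to mimic the proof of \Cref{lem:pur-circle} (the circle-vertex case) but with an extra $-2$ in the potential instead of $-1$, reflecting the fact that a square vertex in $M_\al$ is incident to two edges at every visit and hence we are willing to ``presume'' two edges forced (paying the cheap extra $[2]$ factor per square vertex mentioned before the lemma). Concretely, I would define
\[
    P_t(v) \;\coloneqq\; \#(\text{$R$ steps closed from $v$}) + \#(\text{unclosed edges incident to $v$ at time } t) - 2 \mcom
\]
establish the first inequality $\pur_t(v) \le P_t(v)$ exactly as in the circle case (every $R$ step out of $v$ is potentially unforced, and every currently-unclosed edge at $v$ may be closed by a future unforced return, while two edges are always presumed forced), and then prove $P_t(v) \le 2(s_t(v)+h_t(v))$ by induction on the successive visits to $v$.

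For the induction, the base case is the block-step in which $v$ first appears: since $v$ is a square vertex in $M_\al$ it is born together with two incident $F$-edges (or, in the collapsed $M_\beta$ picture, we still treat it as contributing two edge-copies), so $\#(\text{unclosed edges}) = 2$, no $R$ step has yet been pushed out, hence $P_t(v) = 2 - 2 = 0 = 2(s_t(v)+h_t(v))$. For the inductive step, let $t' < t$ be the previous visit to $v$ and assume $P_{t'}(v) \le 2(s_{t'}(v)+h_{t'}(v))$. Between $t'$ and $t$ the only changes to $P$ come from steps \emph{leaving} $v$: each outgoing $F$ or $S$ step creates a fresh unclosed edge and raises $P$ by $1$, each outgoing $R$ step converts an unclosed edge into a closed-from-$v$ edge and leaves $P$ unchanged, and outgoing $H$ steps leave $P$ unchanged; crucially a square vertex pushes out exactly two edges per block-step, so between consecutive visits $P$ increases by at most $2$. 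Then I case on how $v$ is reached at time $t$, just as in \Cref{lem:pur-circle}: arrival via $R$ decreases the unclosed-edge count (the edge was opened elsewhere and is now closed, but not closed-from-$v$), so $P_t(v) \le P_{t'}(v)$ while $s_t,h_t$ are unchanged; arrival via $S$ opens a new edge so $P_t(v) \le P_{t'}(v) + 2$ (one from the two outgoing edges, one from the incoming $S$-edge — this is where the factor $2$ rather than $1$ on $s_t$ is used) while $s_t(v) = s_{t'}(v)+1$; arrival via $H$ gives $P_t(v) \le P_{t'}(v)+1 \le P_{t'}(v)+2$ while $h_t(v) = h_{t'}(v)+1$. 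In every case $P_t(v) \le 2(s_t(v)+h_t(v))$, closing the induction.

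The one genuinely new bookkeeping point compared with the circle case — and the step I expect to need the most care — is correctly accounting for a square vertex being incident to \emph{two} edges on both the incoming and outgoing side of a block-step, and in particular verifying that when $v$ is reached by an $S$ step the potential can jump by as much as $2$ (the two outgoing edges of the \emph{next} block-step plus the reopened count), which forces the coefficient $2$ in front of $s_t(v)$; the reason we can afford the weaker bound $2(s_t(v)+h_t(v))$ here rather than $2s_t(v)+h_t(v)$ is exactly the extra $[2]$ payment per square vertex. One should also double-check the boundary convention of \Cref{def:vertex-appearance}: a square vertex sitting on a block-step boundary $U_\tau$ or $V_\tau$ is shared between adjacent blocks, so its ``two outgoing edges'' within a single block is the right local count, and the per-block increase of at most $2$ in $P$ is what the argument relies on. Aside from these bookkeeping subtleties the proof is a routine adaptation of \Cref{lem:pur-circle}, and no probabilistic input is needed — this is a purely combinatorial statement about the structure of the closed walk.
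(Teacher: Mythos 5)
Your overall strategy is the same as the paper's: the same potential $P_t(v)$ with the $-2$ offset, induction over successive visits to $v$, the observation that a departure raises $P$ by at most $2$, and the first inequality argued exactly as in \Cref{lem:pur-circle}. The problem is in the inductive step: you case on the arrival at $v$ as if the walker reaches a square vertex by a \emph{single} step, and under your own accounting the arithmetic then does not close the induction. Concretely, you allow the departure to raise $P$ by $2$, but a single arriving $R$ step lowers the unclosed-edge count by only $1$, so all you get is $P_t(v) \leq P_{t'}(v) + 1$ with $s_t(v) + h_t(v)$ unchanged, and the inductive hypothesis $P_{t'}(v) \leq 2(s_{t'}(v)+h_{t'}(v))$ does not yield $P_t(v) \leq 2(s_t(v)+h_t(v))$; your claim that an $R$ arrival gives $P_t(v) \leq P_{t'}(v)$ is therefore unjustified as written. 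Your $S$ case has the mirror-image inconsistency: you count only ``one from the two outgoing edges'' (contradicting the $+2$ you just granted the departure) and you never say what the second arriving edge-copy does; if both arriving copies are $S/H$ the correct intermediate bound is $P_t(v) \leq P_{t'}(v)+4$, not $+2$ (this still suffices because $s_t+h_t$ increases by $2$, but it is not what you wrote).

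The missing ingredient is exactly the point you flag informally but never use: in this lemma a square vertex is reached via \emph{two} edge-copies in every block-step (for $M_{\beta}$ the $h_2$ edge is treated as two copies, ``collapsing'' the two circle vertices of $M_{\alpha}$), so the casing must be on the \emph{pair} of arriving steps. That is how the paper argues: two $R$ arrivals give a net $-2$ that cancels the $+2$ from the departure (this is the only reason the pure-$R$ case does not leak); one $S/H$ plus one $R$ gives a net increase of at most $+2$ while $s_t+h_t$ grows by $1$; two $S/H$ arrivals give at most $+4$ while $s_t+h_t$ grows by $2$. With the case analysis redone on pairs in this way, your argument coincides with the paper's proof; as literally stated, the single-step casing is the step that fails.
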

\begin{proof}
We prove this by induction. Note that this is immediate for the base case when $v$ first appears since a square vertex is incident to $2$ edges.
Define $P_t(v) \coloneqq \# \text{(R steps closed from $v$)} + \#\text{(unclosed edges incident to $v$ at time $t$)} - 2$ for convenience.
Suppose the inequality is true at time $t'$, and assume vertex $v$ appears again at time $t$. The departure at time $t'+1$ from $v$ may open up at most $2$ edges, hence $P_{t'+1}(v) \leq P_{t'}(v) + 2$.

When we reach $v$ at time $t$ (via 2 edges), we case on the type of steps:
\begin{itemize}
    \item Arriving by two $R$ steps: the two edges closed by the $R$ steps are not closed from $v$.
    So $P_t(v) = P_{t'+1}-2 \leq P_{t'}(v)$, while $s_t(v) = s_{t'}(v)$ and $h_t(v) = h_{t'}(v)$.

    \item Arriving by one $S/H$ and one $R$ step: in this case, $P_t(v) = P_{t'+1}(v) \leq P_{t'}(v)+2$ and $s_t(v) + h_t(v) = s_{t'}(v) + h_{t'}(v) + 1$.

    \item Arriving by two $S/H$ steps: in this case, $P_{t}(v) = P_{t'+1}(v) + 2 \leq P_{t'}(v) + 4$, whereas $s_t(v) + h_t(v) = s_{t'}(v) + h_{t'}(v) + 2$.    
\end{itemize}
In all three cases, we have $P_{t}(v) \leq 2(s_{t}(v) + h_t(v))$, completing the induction.
\end{proof}

\begin{corollary} \label{cor:pur-square}
For each surprise/high-mul step, it suffices for us to assign $2$ $\pur$ factors, which is a cost of $(2q\cdot D_V)^{2}$ so that each $\pur$ factor throughout the walk is assigned.
Moreover, for $M_\al$, we pay a cost of $2$ for any $R$ step leaving a square vertex so that we can presume $2$ edges being forced in \Cref{lem:pur-square}.
\end{corollary}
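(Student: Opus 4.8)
The plan is to derive the corollary as a \emph{global} consequence of the two \emph{local} estimates \Cref{lem:pur-circle} and \Cref{lem:pur-square}, by summing them over all labeled vertices of the closed walk and charging the total to the $S$ and $H$ steps. Concretely, I would instantiate both lemmas at the end of the walk: for a circle vertex $v$, \Cref{lem:pur-circle} gives $\pur(v)\le 2s(v)+h(v)\le 2\bigl(s(v)+h(v)\bigr)$, where $s(v)$ and $h(v)$ count the $S$ and $H$ steps ever arriving at $v$, while for a square vertex $v$, \Cref{lem:pur-square} already gives $\pur(v)\le 2\bigl(s(v)+h(v)\bigr)$. Since every $S$ step (resp.\ every $H$ step) of the walk arrives at exactly one vertex, summing over all vertices $v$ occurring in the walk yields
\[
\sum_v \pur(v)\ \le\ 2\sum_v \bigl(s(v)+h(v)\bigr)\ =\ 2\,(\#S+\#H)\,,
\]
where $\#S$ and $\#H$ are the total numbers of $S$ and $H$ steps. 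A single potential unforced return is paid for by one $\pur$ factor, namely a label of magnitude at most $2q\cdot D_V$ (enough to name the vertex, hence the edge being closed, since $2q\cdot D_V$ upper bounds the number of labeled vertices in the whole walk). Hence the product of all $\pur$ factors incurred by the walk is at most $(2q\cdot D_V)^{2(\#S+\#H)}$, so it suffices to pre-allocate two $\pur$ factors, i.e.\ a cost of $(2q\cdot D_V)^{2}$, to each $S$ step and each $H$ step; this is exactly the claimed assignment. (One $\pur$ factor per $H$ step would already cover the circle vertices; the uniform choice of two is dictated by the square-vertex bound.)

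For the ``moreover'' clause I would justify that the ``$-2$'' appearing in \Cref{lem:pur-square}, as opposed to the ``$-1$'' for circle vertices, is consistent with the encoding once we pay a constant cost of $2$ for each $R$ step leaving a square vertex. The reason is that a square vertex of $M_\al$ is entered and exited via \emph{two} edges at a time, so presuming that two incident edges are ``forced'' requires the walker to resolve the $2!$-fold ambiguity of which of the two simultaneously pushed-out edges occupies which slot of the shape; a constant payment of $2$ at each such step removes this ambiguity and makes the inductive invariant of \Cref{lem:pur-square} well-defined. Since there are at most $2q$ such steps, the cumulative effect multiplies the trace by at most $2^{2q}$, contributing a harmless factor of $2$ to the block-value function $B_q(\tau)$.

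The step I expect to be the main obstacle is precisely this last bookkeeping: verifying that a \emph{constant} payment (rather than a $\poly(q,D_V)$ one) per square-vertex $R$ step genuinely repairs the ``presume two edges forced'' move, and, more basically, confirming that a single $\pur$ factor of size $2q\cdot D_V$ really is enough to pin down which open edge is closed at an unforced return when the current vertex has accumulated several open edges across many block steps. Granting those points, the corollary follows from the displayed inequality together with the factor-accounting in \Cref{prop:sum-of-labelings}.
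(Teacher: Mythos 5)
Your proposal is correct and matches the paper's intended reasoning: the paper treats this corollary as an immediate consequence of \Cref{lem:pur-circle} and \Cref{lem:pur-square}, and your explicit charging (summing the two bounds over vertices to get total $\pur \le 2(\#S+\#H)$, then allocating two $\pur$ factors of size $2q\cdot D_V$ per $S/H$ step, plus a label in $[2]$ per $R$ step leaving a square vertex to disambiguate the two presumed-forced edges) is exactly the accounting the paper uses, e.g.\ in the block-value computations for $M_\beta$ and $M_\alpha$. The only cosmetic caveat is that the factor $2$ per square-vertex return is not absorbed as $1+o(1)$ but appears explicitly in $B_q(\tau)$ (the $2\sqrt{md}$ and $2m$ terms), which is consistent with what you wrote.
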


\subsection{Wrapping up with examples}
\label{sec:wrapping-up}

Recall \Cref{prop:sum-of-labelings} that for a graph matrix of shape $\tau$,
\begin{align*}
    B_q(\tau) = \sum_{\calL:\text{ step-labelings for } E(\tau)} \vtxcost(\calL) \cdot \edgeval(\calL)
    \numberthis  \label{eq:Bq-restated}
\end{align*}
is a valid block-value function for $\tau$ (\Cref{def:block-value-function}).
Moreover, by \Cref{prop:norm-bound-from-B}, we can take $q \geq d^{\eps}$ and conclude that with probability $1- 2^{-d^{\eps}}$,
\begin{equation*}
    \|M_{\tau}\|_{\op} \leq (1+o(1)) \cdot B_q(\tau) \mper
\end{equation*}

For each given shape, it suffices for us to bound the block-value for each edge-labeling. And we demonstrate how this may be readily done given the above bounds.

\subsubsection{Warm-up: tight bound for GOE}
As a warm-up, we first see how the above framework allows us to readily deduce a tight norm bound for $G \sim \GOE(0,\frac{1}{d})$, where $G$ is a $d\times d$ symmetric matrix with each (off-diagonal) entry sampled from $\calN(0,\frac{1}{d})$.
It is well-known that the correct norm of $G$ is $2+o_d(1)$ \cite{Tao12}.
\Cref{fig:GOE} shows the shape $\tau$ associated with $G$, which simply consists of one edge.
We now proceed to bound \Cref{eq:Bq-restated}.

\parhead{Edge factor} According to our edge factor scheme described in \Cref{sec:edge-factor-scheme} (for $h_1$ edges), an $F/R/S$ step-label gets a factor of $\frac{1}{\sqrt{d}}$ while an $H$ step-label gets $\frac{2q}{\sqrt{d}}$.

\parhead{$\pur$ factor}
By \Cref{lem:pur-circle}, there is no $\pur$ factor for $F/R$, while $S$ and $H$ get $2$ and $1$ $\pur$ factors respectively.

\parhead{Vertex factor}
The weight of a circle vertex is $d$, thus any vertex making a first or last appearance gets a factor of $\sqrt{d}$.
We now case on the step-label and apply the vertex factor assignment scheme described in \Cref{sec:vertex-factor-scheme}.

\begin{itemize}
    \item $F$: the vertex in $U_\tau$ must be making a middle appearance; it is not first due to \Cref{def:vertex-appearance}, and it is not last as otherwise the edge appears only once throughout the walk. 
    The vertex in $V_\tau$ is making a first appearance, so it gets a factor of $\sqrt{d}$;
    
    \item $R$: the vertex in $V_\tau$ is making a middle appearance, since it is incident to an $R$ edge (hence not first appearance), and it is on the boundary hence bound to appear again the next block.
    The vertex in $U_\tau$ may be making its last appearance, so it gets a factor of $\sqrt{d}$;

    \item $S$: the vertex in $U_\tau$ is making a middle appearance (same as $F$), and the vertex in $V_\tau$ is making a middle appearance since it cannot be first and must appear again.
    In addition, it gets $2$ factors of $\pur$, which gives a bound of $(2q\cdot D_V)^{2}$;
    \item $H$: analogous to the above, both vertices are making middle appearance, and it gets $1$ factor of $\pur$, giving a bound of $2q\cdot D_V$.
    
\end{itemize}
Combining the vertex and edge factors, we can bound \Cref{eq:Bq-restated}:
\begin{align*}
    B_q(\tau) = 
    \sqrt{d} \cdot \frac{1}{\sqrt{d}} + \sqrt{d} \cdot \frac{1}{\sqrt{d}} + (2q\cdot D_V)^{2} \cdot \frac{1}{\sqrt{d}} + (2q\cdot D_V) \cdot \frac{2q}{\sqrt{d}}
    \leq 2 + o_d(1) \mcom
\end{align*}
since $q$ and $D_V$ are both $\polylog(d)$.
Therefore, by \Cref{prop:norm-bound-from-B}, we can conclude that $\|G\|_{\op} \leq 2 + o_d(1)$ with high probability, which is the correct bound.

\subsubsection{Bound for \texorpdfstring{$M_\beta$}{M\_beta}}

We now prove a bound on $\|M_{\beta}\|_{\op}$.
\Cref{fig:M_beta} shows the associated shape.

\begin{lemma}[Norm bound for $M_{\beta}$] \label{lem:M-beta-norm-bound}
    Let $m \geq \omega(d)$ and $q =  \Omega(\log^2 d)$.
    Then with probability $1-  2^{-q/\log d}$,
    \[ 
    \|M_\beta\|_{\op} \leq (1+o_d(1)) \frac{2m}{d^2}\,.
    \]
\end{lemma}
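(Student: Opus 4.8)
The plan is to instantiate the machinery developed in Sections~\ref{sec:overview-local-analysis}--\ref{sec:resolving-confusion} on the shape $\tau$ associated with $M_\beta$, which (see \Cref{fig:M_beta}) has two square boundary vertices $U_\tau, V_\tau$, a single circle vertex in the middle, and two $h_2$-edges connecting the circle vertex to each of the two square vertices. By \Cref{prop:sum-of-labelings} and \Cref{prop:norm-bound-from-B}, it suffices to bound $B_q(\tau) = \sum_{\calL} \vtxcost(\calL)\cdot\edgeval(\calL)$ where the sum ranges over all step-labelings $\calL : E(\tau) \to \{F,R,S,H\}$ of the two edges, and show it is at most $(1+o_d(1))\tfrac{2m}{d^2}$ for $q = \Omega(\log^2 d)$; the $2^{-q/\log d}$ failure probability then follows from \Cref{prop:norm-bound-from-B}.

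First I would enumerate the relevant step-labelings. Since a block-step traverses the two edges in order, the labelings of interest (by the localized rules in \Cref{sec:vertex-factor-scheme}) are determined by how each of the two $h_2$-edges is used. The dominant labeling is $(F,R)$ on the two edges: the first edge is fresh (so the circle vertex is visited anew, contributing a vertex factor $\sqrt{d}$, and the $h_2$ edge contributes $\tfrac{\sqrt 2}{d}$), and the second edge is a return closing back to the right-boundary square vertex, which is a middle appearance and so carries no vertex factor but the $h_2$ return edge contributes $\tfrac{\sqrt2}{d}$; the square vertex on the right boundary may make a first appearance getting $\sqrt m$, and by redistribution the square vertex on the left boundary making its last appearance gives a backpay of $\sqrt m$. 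Multiplying: $\sqrt{m}\cdot\sqrt{m}\cdot\sqrt{d}\cdot\tfrac{\sqrt2}{d}\cdot\tfrac{\sqrt2}{d} = \tfrac{2m}{d^2}$. For every other labeling — $(F,F)$, $(S,\cdot)$, $(H,\cdot)$, $(R,\cdot)$ as the first edge, etc. — I would check that we pay either an extra $\pur$ factor of $(2q\cdot D_V)^{O(1)}$ (from \Cref{lem:pur-square}, treating the two boundary vertices as square and the middle circle via \Cref{lem:pur-circle}) together with only $\poly(q)$-type edge factors instead of a vertex weight, so that these terms are smaller than the dominant one by a factor of at least $\tilde O(1)/\min(m,d) = o(1)$, using $m \geq \omega(d)$. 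Summing the dominant term with the $o(1)$-fraction contributions of the finitely many other labelings gives $B_q(\tau) \leq (1+o_d(1))\tfrac{2m}{d^2}$.

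The main obstacle — though it is largely bookkeeping given the framework — is verifying that in each non-dominant labeling the vertex-factor savings (a missing $\sqrt m$ or $\sqrt d$) genuinely dominate the $\pur$ penalty $(2q D_V)^{O(1)} = \polylog(d)$; this requires being careful about which boundary vertices can simultaneously be ``first'' vs. ``middle'' vs. ``last'' (the redistribution rule and \Cref{def:vertex-appearance}) so that we never double-count a $\sqrt m$ backpay, and about the edge factors for $h_2$ edges under $H$ steps (factor $\tfrac{8q^2}{d}$ from \Cref{sec:edge-factor-scheme}). Once those local checks are in place, the bound follows immediately from \Cref{prop:norm-bound-from-B} and we conclude $\|M_\beta\|_{\op} \leq (1+o_d(1))\tfrac{2m}{d^2}$ with probability $1 - 2^{-q/\log d}$.
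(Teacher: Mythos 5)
Your overall strategy is the same as the paper's (enumerate step-labelings of the two edges of the $M_\beta$ shape, bound $B_q(\beta)$ via \Cref{prop:sum-of-labelings}, conclude with \Cref{prop:norm-bound-from-B}), but the core of the argument — identifying the dominant labeling and its value — is wrong. The labeling you declare dominant, $F$ on the first edge followed by $R$ on the second, cannot occur: an $F$ step means the middle circle vertex is making its first appearance, so no labeled edge incident to it has appeared before, and hence the second step out of it cannot be a return. (The paper notes exactly this: ``$F\rightarrow R$: this cannot happen.'') Your vertex-factor accounting for that labeling is also inconsistent with the localized rules of \Cref{sec:vertex-factor-scheme}: a vertex incident to an $F$ step cannot be making its last appearance (the edge must be closed later), so the left-boundary square vertex cannot collect the $\sqrt{m}$ backpay in a block where it emits a fresh edge. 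Finally the arithmetic does not close: $\sqrt{m}\cdot\sqrt{m}\cdot\sqrt{d}\cdot\tfrac{\sqrt2}{d}\cdot\tfrac{\sqrt2}{d}=\tfrac{2m\sqrt{d}}{d^2}$, not $\tfrac{2m}{d^2}$.

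The correct dominant labeling is $R\rightarrow F$: the first edge closes (the left square may be making its last appearance, factor $\sqrt{m}$), the circle vertex is a middle appearance (no factor), and the second edge is fresh, leading to a new square vertex (factor $\sqrt{m}$); with edge factor $\tfrac{2}{d^2}$ this gives exactly $\tfrac{2m}{d^2}$. Your explanation of why the other labelings are subleading is also off: $F\rightarrow F$ and $R\rightarrow R$ incur no $\pur$ factors at all and carry genuine vertex weight $\sqrt{md}\cdot\tfrac{2}{d^2}$; they are dominated because $\sqrt{md}=o(m)$ under the hypothesis $m\geq\omega(d)$, not because of a $\polylog$ penalty. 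Only the labelings involving $S$ or $H$ steps are controlled by the $\pur$/$\polylog$ bookkeeping you describe. As written, the proposal would not produce the stated bound without these corrections.
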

\begin{proof}
Let $\beta$ be the shape associated with the matrix $M_{\beta}$.
We bound $B_q(\beta)$ via our vertex/edge factor assignment schemes combined with $\pur$ factors.
Recall that each square vertex has weight $m$ and each circle vertex has weight $d$.
We case on the step-labels of the two edges,


\begin{enumerate}
    \item $F\rightarrow F$: we have an $F$ edge leading to a square vertex and circle vertex each.
    The first square vertex must be making a middle appearance (\Cref{def:vertex-appearance}), while the circle and the other square vertex make first appearances, giving a vertex factor of $\sqrt{m} \cdot \sqrt{d}$.
    Furthermore, there is no $\pur$ factor incurred.
    Finally, the edge factor is $\frac{2}{d^2}$.
    
    \item $R\rightarrow R$: we have both $R$ edges departing from a square and circle vertex.
    There is no vertex making first appearance and the square vertex on the right must be making middle appearance.
    The other two vertices may be making last appearances.    Furthermore, there is no $\pur$ factor incurred, while we assume each $R$ edge from a square vertex can be identified at a cost of $[2]$ modulo the ones with assigned $\pur$ factor, giving a total vertex factor of $2\sqrt{m} \cdot \sqrt{d}$.
   Finally, the edge factor is $\frac{2}{d^2}$.
    
    \item $R\rightarrow F$: 
    the circle vertex must be making a middle appearance, since the $F$ edge must be closed later.
    The square vertex on the left may be making a last appearance, and the square vertex on the right must be making a first appearance.
    This gives a vertex factor of $\sqrt{m}\cdot \sqrt{m} = m$.
    There is no $\pur$ factor, and the edge factor is $\frac{2}{d^2}$.


    \item $F\rightarrow R$: this cannot happen.
    The $F$ step means that the circle vertex is making its first appearance, but the $R$ step means that it must have appeared before.
    

    \item For any other step-labelings involving $S$ and $H$ step-labels, both vertices of the $S/H$ edge must be making middle appearances.
    Thus, the vertex factor is at most $\sqrt{m}$.
    By \Cref{lem:pur-circle,lem:pur-square}, the $\pur$ factor is at most $(2q )^4  $.
    Finally, the edge factor is at most $O(\frac{q^4}{d^2}) $.
    
\end{enumerate}
Summing over all possible step-labelings, by \Cref{prop:sum-of-labelings} we get
\begin{align*}
    B_q(\beta) \leq \sqrt{md} \cdot \frac{2}{d^2} + 2\sqrt{md} \cdot \frac{2}{d^2} + m \cdot \frac{2}{d^2} + \frac{q^{O(1)} }{d^2}
    \leq \frac{2m}{d^2} (1 + o_d(1)) \mcom
\end{align*}
provided that $m \gg d$.
Therefore, by \Cref{prop:norm-bound-from-B}, we have that with probability $1-  2^{-q/\log d}$, $\|M_{\beta}\|_{\op} \leq (1+o_d(1)) \frac{2m}{d^2}$.
\end{proof}

\subsubsection{Bound for \texorpdfstring{$M_\al$}{M\_alpha}}

We now prove a bound on $\|M_{\al}\|_{\op}$.
\Cref{fig:M_alpha} shows the associated shape.

\begin{lemma}[Norm bound for $M_\al$] \label{lem:M-alpha-norm-bound}
    Let $m \geq \omega(d)$ and $q = \Omega(\log^2 d) $.
    Then with probability $1-  2^{-q/\log d}$,
    \[ 
    \|M_\al\|_{\op} \leq (1+o_d(1)) \cdot \frac{1}{d^2} (3d\sqrt{m} + 2m) \,.
    \]
\end{lemma}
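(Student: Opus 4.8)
The plan is to bound $B_q(\al)$ by summing $\vtxcost(\calL)\cdot\edgeval(\calL)$ over all step-labelings $\calL$ of the two $h_1$ edges of the shape $\al$ (recall \Cref{fig:M_alpha}: square vertex $i$ on $U_\tau$, two parallel circle vertices $a,b$ in the middle, square vertex $j$ on $V_\tau$, all four edges carrying $h_1$), exactly as in the proof of \Cref{lem:M-beta-norm-bound}. The shape $\al$ has four edges $(i,a),(i,b),(j,a),(j,b)$; since the walk must close each labeled edge, the step-labels on the two ``left'' edges incident to $i$ and the two ``right'' edges incident to $j$ are constrained, and the dominant contributions will come from the all-$F$ and all-$R$ labelings, plus the mixed $R\to F$ type, with everything involving $S$ or $H$ being lower order.

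First I would enumerate the step-labelings. For the block traversed from $U_\tau$ to $V_\tau$: the case where both right edges $(j,a),(j,b)$ are $F$ forces $j$ to make a first appearance and $a,b$ to be first appearances too (vertex factor $\sqrt{m}\cdot\sqrt{d}\cdot\sqrt{d}=d\sqrt{m}$ from the new vertices, with $i$ making a middle appearance by \Cref{def:vertex-appearance}), and the edge factor is $(\frac{1}{\sqrt d})^2=\frac{1}{d^2}$ (two new $h_1$ edges; the edges $(i,a),(i,b)$ are not new in this block — they were the $F$ edges of an earlier block), giving $d\sqrt m\cdot\frac1{d^2}$. Symmetrically the all-$R$ case (both edges incident to $i$ are $R$, closing from the square vertex $i$) gives, using \Cref{cor:pur-square} which lets us pay a cost of $2$ per $R$ step leaving a square vertex to presume both incident edges forced, a vertex factor of $2^2\sqrt m\cdot\sqrt d\cdot\sqrt d = 4\,d\sqrt m$ ... wait — I need to be careful here: in the $R\to R$ block the two circle vertices $a,b$ and the left square vertex $i$ may make last appearances ($\sqrt d\cdot\sqrt d\cdot\sqrt m$), the right square $j$ is a middle appearance, and modulo $\pur$ the $R$ edges cost $[2]$ each, so the vertex factor is $4\sqrt{m}\,d$ and edge factor $\frac1{d^2}$, giving $\frac{4d\sqrt m}{d^2}$. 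Hmm, that yields $5d\sqrt m/d^2$ from these two alone, exceeding the claimed $3d\sqrt m$; I expect the actual accounting is tighter — e.g. the all-$R$ block contributes only $2d\sqrt m/d^2$ because the $[2]$-costs are absorbed differently, or the constants combine so the three surviving families (all-$F$: $d\sqrt m$; all-$R$: $d\sqrt m$; one-$F$-one-$R$ on each side, i.e. the $R\to F$ analogue: giving the $\sqrt m\cdot\sqrt m=m$ term and a $d\sqrt m$ cross term from the remaining circle vertex) sum to $\frac{1}{d^2}(3d\sqrt m + 2m)$. The key structural point to verify is the $F\to R$-is-impossible observation transplanted from \Cref{lem:M-beta-norm-bound}: if a circle vertex is reached by an $F$ edge on one side it cannot simultaneously be closed by an $R$ edge on the same side in a way that makes it a first appearance.

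Then I would bundle all step-labelings that use at least one $S$ or $H$ label: in every such case the endpoints of that edge are forced to middle appearances, so the extra vertex weight is cut, and by \Cref{lem:pur-circle,lem:pur-square,cor:pur-square} the $\pur$ cost is at most $(2q\cdot D_V)^{O(1)}$ and the edge factor at most $\frac{q^{O(1)}}{d}$ per $h_1$ edge (using the $H$ rule $\frac{2q}{\sqrt d}$), so the whole family contributes $\frac{q^{O(1)}}{d^2}\cdot\poly(m,d)^{1/2}$ which is $o(\frac{d\sqrt m}{d^2})$ as long as $m\geq\omega(d)$ and $q,D_V=\polylog d$. Summing, $B_q(\al)\leq(1+o_d(1))\frac{1}{d^2}(3d\sqrt m+2m)$, and \Cref{prop:norm-bound-from-B} with $q\geq d^\eps$ (or $q=\Omega(\log^2 d)$) converts this into the claimed high-probability spectral norm bound.

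The main obstacle will be the bookkeeping of the mixed-direction block-steps: unlike $M_\beta$ where each block has only two edges, here each block has four edges (two incident to the left square vertex, two to the right), so one must carefully track which of $(i,a),(i,b)$ are $F/R/S/H$ jointly with $(j,a),(j,b)$, use \Cref{def:vertex-appearance} to decide which of $i,j,a,b$ count as first/middle/last in this particular block, and invoke \Cref{cor:pur-square} to pay the $[2]$ per $R$-from-square so that the $\pur$ potential argument of \Cref{lem:pur-square} goes through. Getting the three surviving families to sum to exactly $3d\sqrt m + 2m$ (rather than a larger constant) requires being careful that the $\sqrt{\wt(\cdot)}$ ``backpay'' redistribution is not double-counted across adjacent blocks — i.e. each vertex's first/last appearance each contribute $\sqrt{\wt}$ exactly once globally, which is precisely what the redistribution scheme in \Cref{sec:vertex-factor-scheme} guarantees. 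I would also double-check the edge-factor side: the two $h_1$ edges that are ``new'' in a given block-step contribute $\frac{1}{\sqrt d}\cdot\frac1{\sqrt d}=\frac1d$... and there are two such $h_1$ edges being closed/opened per block, but since $M_\al$'s entry is a product of four $h_1$ values and a block-step of $M_\al M_\al^T$ shares two circle vertices, I need to confirm the per-block edge factor is $\frac{1}{d^2}$ and not $\frac1d$, which is what makes the final bound come out as $\frac{1}{d^2}(3d\sqrt m+2m)$ rather than $\frac1d(\cdots)$.
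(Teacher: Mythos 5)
Your overall route is the same as the paper's: enumerate step-labelings of the four edges of $\al$, apply the redistribution vertex-factor scheme, the edge-factor scheme, and the $\pur$ bounds, and convert $B_q(\al)$ into a norm bound via \Cref{prop:norm-bound-from-B}. However, there are two concrete gaps. First, your edge-factor accounting within a block is wrong, and you yourself flag it as unresolved. In the trace of $(M_\al M_\al^T)^q$ each block-step traverses \emph{all four} edges of the shape, and in the all-$F$ block the two circle vertices $a,b$ are new, so the edges $(i,a),(i,b)$ cannot be ``the $F$ edges of an earlier block'' --- they are new labeled edges of this block. Each of the four $F$ (or $R$) edge-copies carries $\frac{1}{\sqrt d}$, so the per-block edge factor in the dominant labelings is $\bigl(\frac{1}{\sqrt d}\bigr)^4=\frac{1}{d^2}$; your justification via only two new edges, together with the slip $\bigl(\frac{1}{\sqrt d}\bigr)^2=\frac{1}{d^2}$ (which is actually $\frac1d$), leaves exactly the point you say you ``need to confirm'' unproved, and if the factor were $\frac1d$ the lemma would fail. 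This is precisely how the paper gets the $\frac1{d^2}$ normalization, and it has to be argued, not assumed.

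Second, the constants $3$ and $2$ are part of the statement, and your bookkeeping does not produce them: charging a cost of $2$ for each of the two $R$ steps leaving the square vertex gives you $4d\sqrt m/d^2$ for the all-$R$ block, hence $5d\sqrt m$ total with the all-$F$ block, and you explicitly defer the discrepancy (``I expect the actual accounting is tighter''). In the paper's proof the factor of $[2]$ from \Cref{cor:pur-square} is charged once per block (it identifies the $R$ edges leaving the square vertex modulo assigned $\pur$ factors), so all-$F$ contributes $d\sqrt m/d^2$, all-$R$ contributes $2d\sqrt m/d^2$, and the $R\to F$/$R\to F$ block contributes $2m/d^2$ (vertex factor $\sqrt m\cdot\sqrt m$ from the last appearance of $i$ and first appearance of $j$, times one factor $[2]$); everything involving $S/H$ labels, as well as the remaining mixed $F/R$ labelings such as one path $F\to F$ and the other $R\to F$, is $o\bigl(\frac{d\sqrt m+m}{d^2}\bigr)$ since $m\ge\omega(d)$. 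Your ``$d\sqrt m$ cross term from the remaining circle vertex'' in the mixed family is not a coherent piece of this accounting. Until you fix the single-versus-double charging of the $[2]$ factor and the four-edge edge-factor count, the claimed bound $(1+o_d(1))\frac{1}{d^2}(3d\sqrt m+2m)$ is not established by your argument.
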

\begin{proof}
Let $\alpha$ be the shape associated with the matrix $M_\al$.
Similar to the proof of \Cref{lem:M-beta-norm-bound}, we bound $B_q(\al)$ by casing on the step-labelings.
There are two paths from $U_\al$ to $V_\al$, 4 edges in total.
t\begin{enumerate}
    \item In the case of all $F$ or all $R$, one of the square vertices must be making middle appearance, hence we get a vertex factor of $\sqrt{m} \cdot (\sqrt{d})^2 = d \sqrt{m}$. There is no $\pur$ factor, and the edge factor is $(\frac{1}{\sqrt{d}})^4 = \frac{1}{d^2}$.
    For the case of all $R$, by \Cref{cor:pur-square} we pick up an additional factor of $[2]$ since we assume each $R$ edge from a square vertex can be identified at a cost $[2]$ modulo those with assigned $\pur$ factor.

    \item If both paths are $R \to F$, then both circle vertices are making middle appearances, hence we get a vertex factor of $\sqrt{m} \cdot \sqrt{m} = m$.
    There is no $\pur$ factor while we pick up a factor $[2]$ for the return from the square vertex. Finally, the edge factor is $(\frac{1}{\sqrt{d}})^4 = \frac{1}{d^2}$.

    \item Analogous to $M_\beta$, an $F \to R$ path cannot happen.

    \item For any other step-labelings involving $S$ and $H$ step-labels, there must be at least one square and one circle vertex making middle appearances, so the vertex factor is at most $\sqrt{m} \cdot \sqrt{d}$.
    The $\pur$ factor is $(2q \cdot D_V)^{O(1)} = \polylog(d)$, and the edge factor is $(\frac{2q}{\sqrt{d}})^4 $.

\end{enumerate}
Summing over all possible step-labelings, by \Cref{prop:sum-of-labelings} we get
\begin{align*}
    B_q(\al) \leq d\sqrt{m} \cdot \frac{1}{d^2} + 2d\sqrt{m} \cdot \frac{1}{d^2} + 2m \cdot \frac{1}{d^2} + \sqrt{md} \cdot \frac{q^{O(1)} }{d^2}
    \leq (1+ o_d(1)) \cdot \frac{1}{d^2} (3d\sqrt{m} + 2m) \mper
\end{align*}
Therefore, by \Cref{prop:norm-bound-from-B}, we have that with probability $1-  2^{-q/\log d}$, $\|M_{\al}\|_{\op} \leq (1+o_d(1)) \cdot \frac{1}{d^2}(3d\sqrt{m} + 2m)$.
\end{proof}

\section{Matrix decomposition of \texorpdfstring{$\cR$}{R}}
\label{sec:R-decomposition}

In this section, we work towards analyzing $\cR$ and proving \Cref{lem:R-norm-bound}.
Recall our candidate matrix $\Lambda = I_d - \cR \in \R^{d \times d}$ from \Cref{def:candidate-matrix}, where
\begin{equation*}
    \cR \coloneqq \sum_{i=1}^m w_i v_iv_i^T = \sum_{i=1}^m \left( M^{-1} \eta \right)[i] \cdot v_iv_i^T \mper
\end{equation*}
See \Cref{eq:eta-vector,eq:M-matrix} for a reminder of the definitions of $M \in \R^{m\times m}$ and $\eta \in \R^m$.

To analyze $\cR$, we begin by obtaining an explicit expression for $M^{-1}$ using the Woodbury matrix identity (\Cref{fact:woodbury}), as discussed in \Cref{sec:M-inverse}.
Recall that we write $M = A + UCU^T$ where $A$ is positive definite with high probability by \Cref{lem:A-eigenvalues}, and $U = \frac{1}{\sqrt{d}} \begin{bmatrix} 1_m & \eta \end{bmatrix} \in \R^{m \times 2}$ and $C = \begin{bmatrix}
    1 & 1 \\ 1 & 0
\end{bmatrix}$.
Restating \Cref{eq:russ}, the scalars $r,s,u \in \R$ are defined as follows,
\begin{align*}
    \begin{bmatrix}
    	r & s \\
    	s & u
    \end{bmatrix}
    \coloneqq 
    C^{-1} + U^T A^{-1} U =
    \begin{bmatrix}
	\frac{1_m^T A^{-1} 1_m}{d} & 1 + \frac{\eta^T A^{-1} 1_m}{d}\\
	1+ \frac{\eta^T A^{-1} 1_m}{d} & -1 + \frac{\eta^T A^{-1} \eta }{d} 
    \end{bmatrix}
    \mper
    \numberthis \label{eq:russ-restated}
\end{align*}

Our next step is to show the following expansion of $M^{-1} \eta$.

\begin{proposition}[Expansion of $M^{-1}\eta$]  \label{prop:M-inverse-eta}
    Let $r,s,u\in \R$ be defined as in \Cref{eq:russ-restated}.
    Then,
    \begin{align*}
        M^{-1} \eta = \frac{r+s}{s^2-ru} \cdot A^{-1} \eta - \frac{u+s}{s^2-ru} \cdot A^{-1} 1_m \mper
        \numberthis \label{eq:M-inverse-expanded}
    \end{align*}
\end{proposition}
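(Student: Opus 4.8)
The plan is to apply the Woodbury identity (\Cref{fact:woodbury}) to $M = A + UCU^T$ and then simplify the resulting expression after multiplying by $\eta$ on the right. By \Cref{lem:A-eigenvalues}, $A$ is invertible, and by \Cref{lem:M-invertible}, the $2\times 2$ matrix $C^{-1} + U^T A^{-1} U = \begin{bmatrix} r & s \\ s & u\end{bmatrix}$ is invertible (its determinant $ru - s^2$ is nonzero). Hence Woodbury gives
\[
    M^{-1} = A^{-1} - A^{-1} U \Paren{C^{-1} + U^T A^{-1} U}^{-1} U^T A^{-1} \mper
\]
Multiplying on the right by $\eta$ and recalling $U = \frac{1}{\sqrt d}\begin{bmatrix} 1_m & \eta\end{bmatrix}$, the key quantity to compute is $U^T A^{-1} \eta = \frac{1}{\sqrt d}\begin{bmatrix} 1_m^T A^{-1}\eta \\ \eta^T A^{-1}\eta\end{bmatrix}$.

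The next step is to recognize these entries in terms of $r,s,u$. From \Cref{eq:russ-restated} we read off $\frac{1}{d}\eta^T A^{-1} 1_m = s - 1$ and $\frac{1}{d}\eta^T A^{-1}\eta = u + 1$, so that $\frac{1}{\sqrt d} U^T A^{-1}\eta = \frac{1}{d}\begin{bmatrix}\eta^T A^{-1} 1_m \\ \eta^T A^{-1}\eta\end{bmatrix} = \begin{bmatrix} s-1 \\ u+1\end{bmatrix}$; equivalently $U^T A^{-1}\eta = \frac{1}{\sqrt d}\begin{bmatrix} d(s-1) \\ d(u+1)\end{bmatrix} = \sqrt d\begin{bmatrix} s-1 \\ u+1\end{bmatrix}$. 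Then I would plug in the explicit inverse $\Paren{C^{-1}+U^TA^{-1}U}^{-1} = \frac{1}{ru - s^2}\begin{bmatrix} u & -s \\ -s & r\end{bmatrix}$ and carry out the matrix-vector product $\frac{1}{ru-s^2}\begin{bmatrix} u & -s \\ -s & r\end{bmatrix}\begin{bmatrix} s-1 \\ u+1\end{bmatrix}$, which yields a $2$-vector whose entries are linear combinations of $r,s,u$. Finally, left-multiplying by $A^{-1}U = \frac{1}{\sqrt d} A^{-1}\begin{bmatrix} 1_m & \eta\end{bmatrix}$ turns this $2$-vector into a linear combination $c_1 A^{-1} 1_m + c_2 A^{-1}\eta$, and I would collect the coefficients.

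Concretely, with $\begin{bmatrix} u & -s \\ -s & r\end{bmatrix}\begin{bmatrix} s-1 \\ u+1\end{bmatrix} = \begin{bmatrix} us - u - su - s \\ -s^2 + s + ru + r\end{bmatrix} = \begin{bmatrix} -(u+s) \\ (ru - s^2) + (r+s)\end{bmatrix}$, we get
\[
    A^{-1} U \Paren{C^{-1}+U^TA^{-1}U}^{-1} U^T A^{-1}\eta = \frac{1}{ru - s^2}\Paren{-(u+s) A^{-1} 1_m + \bigparen{(ru-s^2)+(r+s)} A^{-1}\eta}\mper
\]
Subtracting this from $A^{-1}\eta$ (the first Woodbury term), the $\frac{ru-s^2}{ru-s^2} A^{-1}\eta$ piece cancels the leading $A^{-1}\eta$, leaving $M^{-1}\eta = -\frac{r+s}{ru-s^2} A^{-1}\eta + \frac{u+s}{ru-s^2} A^{-1} 1_m = \frac{r+s}{s^2-ru} A^{-1}\eta - \frac{u+s}{s^2-ru} A^{-1} 1_m$, as claimed. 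The computation is entirely routine once the Woodbury identity is set up; the only thing to be careful about is the bookkeeping of signs and the identification of $\frac{1}{d}\eta^T A^{-1} 1_m$ and $\frac{1}{d}\eta^T A^{-1}\eta$ with $s-1$ and $u+1$ respectively — there is no real obstacle, since invertibility of all relevant matrices is already guaranteed by \Cref{lem:A-eigenvalues} and \Cref{lem:M-invertible}.
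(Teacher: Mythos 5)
Your proposal is correct and follows essentially the same route as the paper: apply Woodbury to $M = A + UCU^T$, invert the $2\times 2$ matrix $\begin{bmatrix} r & s\\ s & u\end{bmatrix}$, substitute $\frac{1}{d}\eta^T A^{-1} 1_m = s-1$ and $\frac{1}{d}\eta^T A^{-1}\eta = u+1$, and collect coefficients of $A^{-1}\eta$ and $A^{-1}1_m$. The only cosmetic difference is that you contract with $\eta$ before expanding, working with $2$-vectors rather than first writing out $M^{-1}$ in full, and your sign/coefficient bookkeeping checks out.
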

\begin{proof}
    The inverse of \Cref{eq:russ-restated} is as follows,
    \begin{align*}
        \Paren{C^{-1} + U^T A^{-1} U}^{-1} = \begin{bmatrix}
            r & s \\ s & u
        \end{bmatrix}^{-1}
        = \frac{1}{ru-s^2}
        \begin{bmatrix}
    		u & -s\\
    		-s & r
        \end{bmatrix} \mper
    \end{align*}
    Then, applying the Woodbury matrix identity (\Cref{fact:woodbury}), we have
    \begin{align*}
        M^{-1} &= A^{-1} - \frac{1}{ru-s^2} A^{-1} U \begin{bmatrix}
        	u & -s\\
        	-s & r
        \end{bmatrix} U^T A^{-1} \\
        &= A^{-1} + \frac{1}{s^2-ru} A^{-1} \left(  u \cdot \frac{1_m 1_m^T}{d} - s \cdot \frac{\eta 1_m^T + 1_m \eta^T}{d} + r \cdot \frac{\eta \eta^T}{d} \right) A^{-1} \mper
    \end{align*}
    Next, using the above, we have
    \begin{align*}
        M^{-1} \eta =
        A^{-1} \eta + \frac{1}{s^2-ru} \bigg( 
        & \Paren{ u\cdot \frac{1_m^T A^{-1} \eta}{d} - s \cdot \frac{\eta^T A^{-1} \eta}{d} } \cdot A^{-1} 1_m \\
        & + \Paren{ -s \cdot \frac{1_m^T A^{-1} \eta}{d} + r \cdot \frac{\eta^T A^{-1} \eta}{d} } \cdot A^{-1} \eta \bigg) \mper
    \end{align*}
    Plugging in the definition of $r,s,u$ in \Cref{eq:russ-restated}, we get
    \begin{align*}
        M^{-1}\eta &= A^{-1}\eta + \frac{1}{s^2-ru} (u (s-1)-s(u+1))  A^{-1}1_m + \frac{1}{s^2-ru}(-s (s-1) + r(u+1))A^{-1}\eta \\
	&= A^{-1}\eta - \frac{u+s}{s^2-ru} \cdot A^{-1}1_m + \frac{-s^2 + ru + r+s}{s^2-ru} \cdot A^{-1}\eta \\
	&= \frac{r+s}{s^2-ru} \cdot A^{-1} \eta - \frac{u+s}{s^2-ru} \cdot A^{-1} 1_m \mcom
    \end{align*}
    finishing the proof.
\end{proof}

\subsection{Inverse of \texorpdfstring{$A$}{A}: Neumann series and truncation}
\label{sec:A-inverse}

In light of \Cref{prop:M-inverse-eta}, we proceed to analyze $A^{-1}$.
Recall from \Cref{prop:M-decomposition} that $A = I_m + M_{\alpha} + M_{\beta} + M_D + \frac{1}{d}I_m$.
A useful tool to obtain inverses is to apply Neumann series (a.k.a.\ matrix Taylor expansion of $\frac{1}{1-x})$, which allows us to write \[ 
(I-T)^{-1} = \sum_{k=0}^\infty T^k
\]
for $\|T\|_{\op} < 1$.
In our case, let $T \coloneqq  -(M_{\alpha} + M_{\beta} + M_D + \frac{1}{d}I_m)$, then $\|T\|_{\op} < 1$ is guaranteed by \Cref{lem:T-norm}.
Thus, we can write
\[
    A^{-1} = \sum_{k=0}^\infty T^k = \sum_{k=0}^\infty (-1)^k \sum_{(Q_1,\dots,Q_k) \in \{M_{\alpha}, M_{\beta}, M_D, \frac{1}{d}I_m\}^k } Q_1 Q_2 \cdots Q_k \mper
\]
With the norm bounds from \Cref{lem:T-norm}, we can truncate the series, i.e., capping the number of occurrences of $M_{\alpha}, M_{\beta}, M_D$ and $\frac{1}{d}I_m$ by certain thresholds, such that the error is small.

\begin{definition}[Truncated $A^{-1}$]  \label{def:truncated-A-inverse}
    We define thresholds $\tau_1 = \tau_2 = O(\log d)$, $\tau_3 = 3$, and $\tau_4 = 1$,
    and define the truncation of $A^{-1}$ as
    \begin{equation*}
        T_0 \coloneqq \sum_{\substack{k_1,k_2,k_3,k_4\in \N \\ k_i \leq \tau_i,\ \forall i}} (-1)^{k_1+\cdots + k_4} \sum_{\substack{(Q_1,\dots,Q_k) \in \{M_{\alpha}, M_{\beta}, M_D, \frac{1}{d}I_m\}^k \\ \text{$i$-th matrix occurring $k_i$ times}}} Q_1 Q_2 \cdots Q_k \mper
        \numberthis \label{eq:truncated-A-inverse}
    \end{equation*}
\end{definition}

In the next lemma, we upper bound the truncation error.

\begin{lemma}[Truncation error of $A^{-1}$] \label{lem:truncated-A-inverse}
    Suppose $m \leq cd^2$ for a small enough constant $c$.
    Let $T_0$ be the truncated series defined in \Cref{def:truncated-A-inverse} with thresholds $\tau_1=\tau_2 = O(\log d)$, $\tau_3 = 3$ and $\tau_4 = 1$.
    Then, with probability $1-o_d(1)$, the truncation error $E = A^{-1} - T_0$ satisfies $\|E\|_{\op} \leq O(\frac{\log d}{d})^{2}$.
\end{lemma}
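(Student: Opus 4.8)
The plan is to condition on the $1-o_d(1)$-probability event of \Cref{lem:T-norm}, on which $\|M_{\alpha}\|_{\op},\|M_{\beta}\|_{\op}\le 0.1$, $\|M_D\|_{\op}\le\delta$ with $\delta=O(\sqrt{\log d/d})$, and of course $\|\tfrac1d I_m\|_{\op}=1/d$. On this event the four operator-norm bounds sum to $0.2+\delta+1/d<1$ for $d$ large, so $\|T\|_{\op}<1$, the matrix $A$ is invertible, and the Neumann series $A^{-1}=\sum_{k\ge0}(-1)^k\sum_{(Q_1,\dots,Q_k)\in\{M_{\alpha},M_{\beta},M_D,\frac1d I_m\}^k}Q_1\cdots Q_k$ converges \emph{absolutely} in operator norm (since $\sum_k(\|M_{\alpha}\|_{\op}+\|M_{\beta}\|_{\op}+\|M_D\|_{\op}+\|\tfrac1d I_m\|_{\op})^k<\infty$), so reorganizing it term by term is legitimate. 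By the definition of $T_0$ in \Cref{def:truncated-A-inverse}, the error $E=A^{-1}-T_0$ is precisely the sum of those products $Q_1\cdots Q_k$ in which at least one of the four matrices appears strictly more than its threshold number of times. Every such ``bad'' product lies in at least one of the sets $\calB_j=\{\text{products using the }j\text{-th matrix at least }\tau_j+1\text{ times}\}$, so by the triangle inequality and submultiplicativity of $\|\cdot\|_{\op}$,
\[ \|E\|_{\op}\ \le\ \sum_{j=1}^{4}\ \sum_{t\in\calB_j}\ \prod_i\|Q_i\|_{\op}\,. \]

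The main work is to bound each of the four inner sums by a geometric-series estimate. Fix $j$, let $w_j$ denote the operator-norm bound of the $j$-th matrix and $\rho_j$ the sum of the operator-norm bounds of the other three matrices, so that $1-\rho_j=1-(0.2+\delta+1/d)+w_j>w_j$ and hence $w_j/(1-\rho_j)<1$. Writing a product in $\calB_j$ as an interleaving of $\ell\ge\tau_j+1$ copies of the $j$-th matrix with $i\ge0$ copies of the others, and using the identity $\sum_{i\ge0}\binom{\ell+i}{i}x^i=(1-x)^{-\ell-1}$ for $0\le x<1$, one obtains
\[ \sum_{t\in\calB_j}\prod_i\|Q_i\|_{\op}\ \le\ \frac{1}{1-\rho_j}\sum_{\ell\ge\tau_j+1}\Paren{\frac{w_j}{1-\rho_j}}^{\ell}\ =\ O\Paren{\Paren{\tfrac{w_j}{1-\rho_j}}^{\tau_j+1}}\,. \]
For $j=1,2$ we have $w_j=0.1$ and $\rho_j\le0.2$ for $d$ large, so $w_j/(1-\rho_j)\le 1/8$, and with $\tau_1=\tau_2=O(\log d)$ chosen with a sufficiently large constant this contributes at most $d^{-3}\ll(\log d/d)^2$. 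For $j=3$ we have $w_3=\delta$ and $\tau_3=3$, contributing $O(\delta^4)=O((\log d/d)^2)$. For $j=4$ we have $w_4=1/d$ and $\tau_4=1$, contributing $O(1/d^2)\le O((\log d/d)^2)$. Summing the four bounds gives $\|E\|_{\op}\le O((\log d/d)^2)=O(\log d/d)^2$ on the conditioned event, which occurs with probability $1-o_d(1)$.

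The argument is largely routine, and there is no single deep obstacle; the two points that require care are (i) establishing absolute convergence of the Neumann series so that the term-by-term identification of $E$ is valid --- which needs only that $\|M_{\alpha}\|_{\op},\|M_{\beta}\|_{\op}$ are bounded away from $1$, as provided by \Cref{lem:T-norm} --- and (ii) verifying that the thresholds are calibrated correctly: $\tau_3=3$ is exactly what makes $\delta^{\tau_3+1}=\delta^4$ match the target $\widetilde O(1/d^2)$, $\tau_4=1$ suffices since $(1/d)^2$ is already below target, and $\tau_1,\tau_2$ must be a large enough constant times $\log d$ so that the constant-base geometric tails $(1/8)^{\tau_j}$ fall below $(\log d/d)^2$ (note that since $M_{\alpha},M_{\beta}$ have only constant, not vanishing, norm bounds, a constant threshold would not suffice, which is why $\tau_1,\tau_2$ must grow with $d$).
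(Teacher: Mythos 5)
Your proposal is correct and follows essentially the same route as the paper: condition on \Cref{lem:T-norm}, identify $E$ with the sum of products in which some matrix exceeds its threshold, count interleavings via binomial coefficients, and sum the resulting geometric tails, checking that $\tau_3=3$ gives $\widetilde{O}(1/d^2)$ from $\|M_D\|_{\op}^4$ while $\tau_1,\tau_2=O(\log d)$ kill the constant-base tails. The only cosmetic difference is that you use the exact identity $\sum_i\binom{\ell+i}{i}x^i=(1-x)^{-\ell-1}$ where the paper simply bounds $\binom{i+j}{i}\le 2^{i+j}$.
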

\begin{proof}
    From \Cref{lem:T-norm}, we know that $\|M_{\alpha}\|_{\op}, \|M_{\beta}\|_{\op} \leq 0.1$, $\|M_D\|_{\op} \leq O(\sqrt{\log d/d})$, and $\|\frac{1}{d}I_m\|_{\op} = \frac{1}{d}$ with high probability.
    We can bound the contribution of $\|M_{\alpha}\|_{\op}$ in the truncation error by
    \begin{align*}
        &\sum_{i=\tau_1+1}^\infty \sum_{j=0}^{\infty} \binom{i+j}{i} \cdot \|M_{\alpha}\|_{\op}^i \cdot \Paren{\|M_{\beta}\|_{\op} + \|M_{D}\|_{\op} +  \frac{1}{d} }^j  \\
        \leq\ &\sum_{i=\tau_1+1}^\infty \sum_{j=0}^{\infty} 2^{i+j} \cdot  \|M_{\alpha}\|_{\op}^i \cdot \Paren{0.1+ o(1)}^j \\
        \leq\ & \Paren{2\|M_{\alpha}\|_{\op}}^{\tau_1+1} \cdot O(1) \mper
    \end{align*}
    Similarly for $M_{\beta}, M_D$ and $\frac{1}{d}I_m$.
    Therefore, we can bound the total truncation error:
    \begin{align*}
        \|E\|_{\op} &\leq \Paren{ \Paren{2\|M_{\alpha}\|_{\op}}^{\tau_1+1} + \Paren{2\|M_{\beta}\|_{\op}}^{\tau_2+1} + \Paren{2\|M_{D}\|_{\op}}^{\tau_3+1} + (2/d)^{\tau_4+1} } \cdot O(1) \\
        &\leq O\Paren{\frac{\log d}{d}}^{2}
    \end{align*}
    by our choice of thresholds $\tau_1,\tau_2,\tau_3,\tau_4$.
\end{proof}

\subsection{Decomposition of \texorpdfstring{$\calR$}{R} via truncated \texorpdfstring{$A^{-1}$}{A\{-1\}}}

We shift our attention back to $\calR = \sum_{i=1}^m w_i v_i v_i^T$.
Using the expansion of $M^{-1}\eta$ in \Cref{prop:M-inverse-eta} (\Cref{eq:M-inverse-expanded}) and the truncation of $A^{-1}$, we decompose $\calR$ as follows.

\begin{proposition}[Decomposition of $\calR$] \label{prop:R-decomposition}
    Suppose $m \leq cd^2$ for a small enough constant $c$.
    Let $T_0$ be the truncated series of $A^{-1}$ defined in \Cref{def:truncated-A-inverse}. Then,
     \[
    \calR = \calR_1 + \calR_2 + E_{\calR} \mcom
    \]
    where
    \begin{align*}
        \calR_1 &\coloneqq \frac{r+s}{s^2-ru} \sum_{i\in [m]} (T_0 \eta)[i] \cdot v_i v_i^T \mcom \\
        \calR_2 &\coloneqq \frac{-u-s}{s^2-ru} \sum_{i\in [m]} (T_0 1_m)[i] \cdot v_i v_i^T \mcom
    \end{align*}
    and $\|E_{\calR}\|_{\op} \leq o(1)$ with probability $1-o_d(1)$.
\end{proposition}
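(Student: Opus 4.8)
The plan is to substitute the Neumann truncation $A^{-1} = T_0 + E$ of \Cref{def:truncated-A-inverse} into the closed form for $M^{-1}\eta$ from \Cref{prop:M-inverse-eta}, read off $\calR_1$ and $\calR_2$ as exactly the contributions coming from $T_0$, and lump the remaining terms into $E_\calR$. The whole content of the proposition is then a norm bound on $E_\calR$, which reduces to combining the norm bound on $E$ from \Cref{lem:truncated-A-inverse} with the scalar estimates of \Cref{lem:M-invertible} and a bound on reweighted Gram matrices.

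\medskip
\noindent\textbf{Step 1 (algebraic rearrangement).}
Set $E \coloneqq A^{-1} - T_0$, so that $\|E\|_{\op} \le O(\log d/d)^2$ with probability $1-o_d(1)$ by \Cref{lem:truncated-A-inverse}. Plugging $A^{-1} = T_0 + E$ into \Cref{eq:M-inverse-expanded},
\begin{align*}
    M^{-1}\eta = \frac{r+s}{s^2-ru}\Paren{T_0\eta + E\eta} - \frac{u+s}{s^2-ru}\Paren{T_0 1_m + E 1_m} \mcom
\end{align*}
and substituting this into $\calR = \sum_{i\in[m]} (M^{-1}\eta)[i]\, v_iv_i^T$ and separating the $T_0$-terms from the $E$-terms yields $\calR = \calR_1 + \calR_2 + E_\calR$ with $\calR_1,\calR_2$ exactly as in the statement and
\begin{align*}
    E_\calR = \frac{r+s}{s^2-ru}\sum_{i\in[m]} (E\eta)[i]\, v_iv_i^T \;-\; \frac{u+s}{s^2-ru}\sum_{i\in[m]} (E 1_m)[i]\, v_iv_i^T \mper
\end{align*}

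\medskip
\noindent\textbf{Step 2 (bounding $E_\calR$).}
I would bound the two sums using the elementary fact that for any $x\in\R^m$, writing $V\in\R^{m\times d}$ for the matrix with rows $v_1^T,\dots,v_m^T$, one has $\sum_i x[i]\, v_iv_i^T = V^T \mathrm{diag}(x)\, V$, so $\|\sum_i x[i] v_iv_i^T\|_{\op} \le \|x\|_\infty \cdot \|V\|_{\op}^2 = \|x\|_\infty \cdot \|\sum_i v_iv_i^T\|_{\op}$. Since $v_i\sim\calN(0,\tfrac1d I_d)$ with $m \ge d$, a standard Wishart concentration bound gives $\|\sum_i v_iv_i^T\|_{\op} \le O(m/d)$ with high probability. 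Taking $x = E\eta$ and using $\|x\|_\infty\le\|x\|_2$, $\|E\eta\|_2 \le \|E\|_{\op}\|\eta\|_2$, and $\|\eta\|_2 \le O(\sqrt{m/d})$ (\Cref{Claim:eta-bound}) gives $\|\sum_i (E\eta)[i] v_iv_i^T\|_{\op} \le O(\log d/d)^2 \cdot O(\sqrt{m/d})\cdot O(m/d)$, and taking $x = E 1_m$ (with $\|1_m\|_2 = \sqrt m$) gives $\|\sum_i (E1_m)[i] v_iv_i^T\|_{\op} \le O(\log d/d)^2\cdot \sqrt m\cdot O(m/d)$. From \Cref{lem:M-invertible} I have $s^2-ru \ge \Omega(m/d)$, $|r+s|\le r+|s|\le O(m/d)$ and $|u+s|\le |u|+|s|\le O(1)$, hence $\frac{|r+s|}{s^2-ru}\le O(1)$ and $\frac{|u+s|}{s^2-ru}\le O(d/m)$. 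Combining these and using $m \le cd^2$,
\begin{align*}
    \|E_\calR\|_{\op} \le O(1)\cdot O\Paren{\tfrac{\log d}{d}}^2\sqrt{\tfrac md}\cdot\tfrac md \;+\; O\Paren{\tfrac dm}\cdot O\Paren{\tfrac{\log d}{d}}^2\sqrt m\cdot\tfrac md \le O\Paren{\frac{\polylog(d)}{\sqrt d}} = o_d(1) \mper
\end{align*}
A union bound over the $o_d(1)$ failure events of \Cref{lem:T-norm}, \Cref{Claim:eta-bound}, \Cref{lem:M-invertible}, \Cref{lem:truncated-A-inverse}, and the covariance concentration bound then completes the proof.

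\medskip
\noindent\textbf{Main obstacle.}
Honestly there is no serious obstacle: the argument is pure bookkeeping on top of estimates already in hand, and the only genuinely external input is the routine high-probability bound $\|\sum_i v_iv_i^T\|_{\op} = O(m/d)$. The one place to be mildly careful is that the scalar $\frac{u+s}{s^2-ru}$ can be as large as $\Theta(d/m)$, so in the $\calR_2$-error term one must track the $\sqrt m$ from $\|1_m\|_2$ against the $O(m/d)$ from the covariance bound; the product still lands at $O(\polylog(d)/\sqrt d)$, but a lossier bound on $\sum_i (E1_m)[i]v_iv_i^T$ would need to be avoided.
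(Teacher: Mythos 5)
Your proposal is correct and follows essentially the same route as the paper: substitute $A^{-1}=T_0+E$ into the expansion of $M^{-1}\eta$ from \Cref{prop:M-inverse-eta}, and bound the error term by combining $\|E\|_{\op}\le O(\log d/d)^2$, the scalar estimates of \Cref{lem:M-invertible}, $\|\eta\|_2\le O(\sqrt{m/d})$, and $\|\sum_i v_iv_i^T\|_{\op}\le O(m/d)$ (the paper's \Cref{claim:sum-vi-vi}), with your $V^T\mathrm{diag}(x)V$ step being just a rephrasing of the paper's entrywise bound on $\delta_i$ plus the PSD sandwich. The only cosmetic difference is that you use the sharper bound $|s|\le 1+o_d(1)$ where the paper contents itself with $|s|\le O(\sqrt d)$; both land at $\|E_\calR\|_{\op}\le O(\polylog(d)/\sqrt d)=o_d(1)$.
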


We first state the following claim which is needed for the error analysis.
\begin{claim} \label{claim:sum-vi-vi}
    With probability $1-e^{-d}$, $\Norm{\sum_{i=1}^m v_iv_i^T}_{\op} \leq (1+o_d(1)) \frac{m}{d}$.
\end{claim}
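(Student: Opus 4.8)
The plan is to recognize $\sum_{i=1}^m v_i v_i^T$ as (a scaled version of) a Wishart matrix and invoke a standard non-asymptotic bound on the extreme singular values of a Gaussian matrix. Concretely, stack the vectors as rows of a matrix $V \in \R^{m \times d}$, so that $V^T V = \sum_{i=1}^m v_i v_i^T$. Since each $v_i \sim \calN(0, \frac{1}{d} I_d)$ independently, we can write $V = \frac{1}{\sqrt{d}} W$ where $W \in \R^{m\times d}$ has i.i.d.\ standard Gaussian entries. Then $\Norm{\sum_{i=1}^m v_i v_i^T}_{\op} = \frac{1}{d}\Norm{W}_{\op}^2 = \frac{1}{d}\sigma_{\max}(W)^2$.

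The main step is then to control $\sigma_{\max}(W)$ for an $m \times d$ i.i.d.\ Gaussian matrix with $m \geq \omega(d)$ (indeed $m \leq cd^2$). By the standard result (e.g.\ Vershynin's non-asymptotic random matrix theory, or Davidson–Szarek), for every $t \geq 0$,
\[
    \Pr\Brac{\sigma_{\max}(W) \geq \sqrt{m} + \sqrt{d} + t} \leq e^{-t^2/2}.
\]
Taking $t = \sqrt{d}$ (or any $t = \Theta(\sqrt{d})$) gives failure probability at most $e^{-d/2} \leq e^{-d}$ up to adjusting constants, and on the complementary event
\[
    \sigma_{\max}(W)^2 \leq \Paren{\sqrt{m} + \sqrt{d} + \sqrt{d}}^2 = m\Paren{1 + \frac{2\sqrt{d}}{\sqrt{m}}}^2 = m\Paren{1 + O\Paren{\sqrt{d/m}}}.
\]
Dividing by $d$ and using $m \leq cd^2$, hence $\sqrt{d/m} = O(1/\sqrt{d}) = o_d(1)$, we obtain $\Norm{\sum_{i=1}^m v_i v_i^T}_{\op} \leq (1+o_d(1))\frac{m}{d}$ with probability at least $1 - e^{-d}$, as claimed. (If one wants to match the exact constant in the exponent, take $t = \sqrt{2d}$ so that $e^{-t^2/2} = e^{-d}$ exactly; the $o_d(1)$ slack is unaffected.)

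I do not expect any real obstacle here — this is a routine application of Gaussian concentration for operator norms, and the regime $m = \omega(d)$ makes the bound especially clean since the $\sqrt{d}$ and $t$ corrections are lower-order compared to $\sqrt{m}$. The only thing to be slightly careful about is bookkeeping the $o_d(1)$: we need $(\sqrt m + O(\sqrt d))^2/d = \frac{m}{d}(1 + o_d(1))$, which holds precisely because $d/m \to 0$. Alternatively, one could prove this from scratch via the trace moment method / $\eps$-net argument on the sphere, but citing the known sharp bound is cleaner and loses nothing.
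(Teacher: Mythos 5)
Your proposal is correct and matches the paper's own argument: both write $\sum_i v_iv_i^T$ as the Gram matrix of a rectangular Gaussian matrix and invoke the standard concentration bound for its largest singular value (the paper cites \cite{Tro15}; you spell out the Davidson--Szarek tail bound and the choice of $t$, relying on the same $m \geq \omega(d)$ regime to absorb the $\sqrt{d}$ corrections into the $o_d(1)$).
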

\begin{proof}
    We can write $\sum_{i=1}^m v_i v_i^T = VV^T$ where $V \in R^{d\times m}$ has i.i.d.\ $\calN(0,\frac{1}{d})$ entries, and note that $\|\sum_{i=1}^m v_i v_i^T\|_{\op} = \sigma_{\max}(V)^2$.
    Since we assume $m \geq \omega(d)$, by standard concentration of the largest singular value of rectangular Gaussian matrices~\cite{Tro15}, we have that $\sigma_{\max}(V) \leq (1+o_d(1)) \sqrt{\frac{m}{d}}$ with probability at least $1-e^{-d}$.
\end{proof}
  

\begin{proof}[Proof of \Cref{prop:R-decomposition}]
    Recall that $\calR = \sum_{i\in[m]} (M^{-1}\eta)[i] \cdot v_i v_i^T$.
    Unpacking the expression of $M^{-1}\eta$ in \Cref{prop:M-inverse-eta} and plugging in $A^{-1} = T_0 + E$, we have $\calR = \calR_1 + \calR_2 + E_{\calR}$ where
    \begin{align*}
        E_{\calR} = \sum_{i\in[m]} \delta_i v_iv_i^T \mcom
        \quad 
        \delta_i \coloneqq \frac{r+s}{s^2-ru} \iprod{E[i],\eta} - \frac{u+s}{s^2-ru} \iprod{E[i], 1_m} \mper
    \end{align*}

    We first upper bound $|\delta_i|$ for all $i\in[m]$.
    First, observe that $|\iprod{E[i], \eta}| \leq \|E\|_{\op} \cdot \|\eta\|_2$ and $|\iprod{E[i], 1_m}| \leq \|E\|_{\op} \cdot \|1_m\|_2$.
    By \Cref{Claim:eta-bound,lem:truncated-A-inverse}, we have $\|E\|_{\op} \leq O(\frac{\log d}{d})^2$ and $\|\eta\|_2 \leq O(\sqrt{\frac{m}{d}})$ with high probability.
    Moreover, by \Cref{lem:M-invertible}, we have that $r = \Theta(\frac{m}{d})$, $|s| \leq O(\sqrt{d})$ and $-1 \leq u \leq -1/2$, hence $s^2 - ru \geq \Omega(\frac{m}{d})$.
    Thus,
    \begin{align*}
        |\delta_i| \leq O\Paren{\frac{\log d}{d}}^2 \cdot \Paren{\sqrt{\frac{m}{d}} + \frac{d^{3/2{}}}{\sqrt{m}}}
        \leq O\Paren{\frac{\log^2 d}{\sqrt{md}}}
    \end{align*}
    as we assume that $m \leq O(d^2)$.
    
    Finally, \Cref{claim:sum-vi-vi} states that $\|\sum_{i=1}^m v_iv_i^T\|_{\op} \leq O(\frac{m}{d})$.
    Since
    \begin{align*}
        -\sum_{i\in[m]} |\delta_i| v_i v_i^T  \preceq E_{\calR} \preceq \sum_{i\in[m]} |\delta_i| v_i v_i^T \mcom
    \end{align*}
    we may conclude that $\|E_{\calR}\|_{\op} \leq O(\frac{\log^2 d}{\sqrt{md}}) \cdot O(\frac{m}{d}) \leq O(\frac{\log^2 d}{\sqrt{d}})$ as $m \leq O(d^2)$.
\end{proof}

\label{sec:R-analysis}

\subsection{Overview: each term is a dangling path of injective gadgets}

By writing $A^{-1}$ as a truncated series (\Cref{def:truncated-A-inverse}) and the decomposition of $\calR$ (\Cref{prop:R-decomposition}), we may view $\calR_1$ and $\calR_2$ as linear combinations of $d \times d$ matrices of the forms
\begin{align*}
    \sum_{i\in[m]} \Paren{Q_1 Q_2 \cdots Q_k \eta}[i] \cdot v_i v_i^T
    \quad \text{and} \quad 
    \sum_{i\in[m]} \Paren{Q_1 Q_2 \cdots Q_k 1_m}[i] \cdot v_i v_i^T
    \numberthis \label{eq:R-terms}
\end{align*}
respectively, where $Q_1,\dots,Q_k \in \{M_\al, M_\beta, M_D, \frac{1}{d} I_m\}$ are $m \times m$ matrices.

Using the machinery of graph matrices described in \Cref{sec:graphical-matrices}, we can systematically represent these matrices as shapes (with underlying input $\{v_1,\dots,v_m\} \subseteq \R^d$).
For starters, the off-diagonal part of the matrix $\sum_{i=1}^m v_i v_i^T$ is represented by the shape in \Cref{fig:R-base}; it serves as the ``base'' for other matrices in \Cref{eq:R-terms}.
Each matrix in \Cref{eq:R-terms} can be represented by attaching ``gadgets'' (\Cref{fig:gadgets}) to the square vertex in the middle.
For the case of $\calR_1$, since $\eta_j = \|v_j\|_2^2-1 = \sum_{a=1}^d h_2(v_j[a])$ for $j\in [m]$ (see \Cref{eq:eta-vector}), each shape has an extra $h_2$ attached at the end.
\Cref{fig:R-A} shows an example of such a shape.

\begin{figure}[ht]
    \centering
    \begin{subfigure}[b]{0.28\textwidth}
        \includegraphics[width=\textwidth]{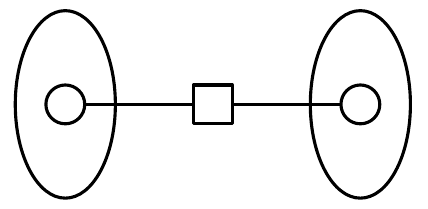}
        \caption{Off-diagonal part of $\sum_{i=1}^m v_i v_i^T$.}
        \label{fig:R-base}
    \end{subfigure}
    \quad
    \begin{subfigure}[b]{0.32\textwidth}
        \includegraphics[width=0.94\textwidth]{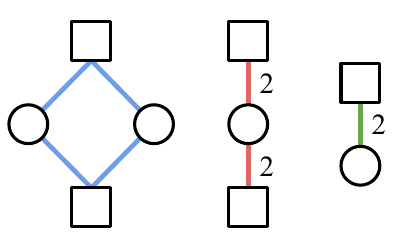}
        \caption{Left: $M_{\alpha}$ gadget. Middle: $M_{\beta}$ gadget. Right: final $h_2$ gadget.}
        \label{fig:gadgets}
    \end{subfigure}
    \quad
    \begin{subfigure}[b]{0.28\textwidth}
        \includegraphics[width=0.94\textwidth]{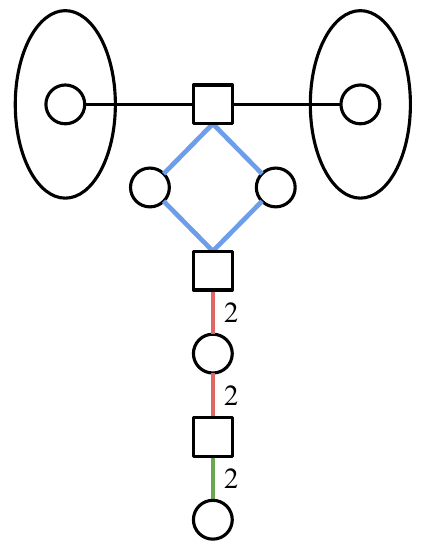}
        \caption{Off-diagonal part of $\sum_{i=1}^m (M_\al M_\beta \eta)[i] \cdot v_i v_i^T$.}
        \label{fig:R-A}
    \end{subfigure}
    \captionsetup{width=.9\linewidth}
    \caption{Examples of graph matrices in the decomposition of $\calR$.
    Recall that square vertices take labels in $[m]$ and circle vertices take labels in $[d]$.
    Unlabeled edges have Hermite index $1$.}
    \label{fig:examples}
\end{figure}

In the following, we formally define such shapes which we call \emph{dangling} shapes.

\begin{definition}[Injective gadgets]
We call each shape in $\{M_\al, M_\beta, M_D, \frac{1}{d}I_m\}$ an \emph{injective gadget} in the dangling shape.
\end{definition}

\begin{definition}[Dangling shapes]
    We further separate the matrices in the decomposition of $\calR$ (\Cref{eq:R-terms}) into diagonal and off-diagonal terms: 
    \begin{enumerate}
        \item For the off-diagonal terms, we start with a path from $U$ to $V$ (each containing a circle vertex receiving labels in $[d]$) passing through a middle square vertex that receives a label in $[m]$.
        \item For diagonal terms, we have a double edge (that shall be distinguished from an $h_2$ edge as the double edge here stands for $v_i[a]^2$) connected to a middle square vertex that receives a label in $[m]$.
    \end{enumerate}

    For each term, we specify a length-$k\geq 0$ dangling path that starts from the middle square vertex such that
    \begin{itemize}
        \item for any $k>0$, each step comes from one of the following gadgets in $\{ M_\al, M_\beta, M_D, \frac{1}{d}I_m\}$;
        \item The dangling path is not necessarily injective, that we may have each vertex appearing at multiple locations along the path.
        However, since it is a walk along the above gadgets, the path is \emph{locally injective} within each gadget.
    \end{itemize}
    For matrices from $\calR_1$, there is an additional $\eta$ factor.
    Thus, we attach an $h_2$ gadget (\Cref{fig:gadgets}) to the end of the dangling path.
    We call this the ``final $h_2$ gadget''.
\end{definition}

\begin{definition}[Gadget incursion] \label{def:gadget-incursion}
    Let $A,B$ be two gadgets along the dangling path.
    We call it a gadget-incursion if there are unnecessary vertex intersections between $V(A)$ and $V(B)$ beyond the ``necessary intersection'': when $A,B$ are adjacent, $V_A = U_B$ is the necessary boundary intersection, any intersection in $V(A)\setminus V_A $ and $V(B)\setminus U_B$ is a gadget incursion; similarly, when $A, B$ are not adjacent,  any intersection in $V(A) $ and $V(B)$ is a gadget incursion.
\end{definition}

\paragraph{Vertex and edge appearance}


Recall from \Cref{sec:overview-local-analysis} that we bound the norm of a graph matrix by analyzing length-$2q$ ``block-walks'' of the shape and bounding the vertex/edge factor of each ``block-step''.
To this end, we need to consider both \textit{global} and \textit{local} appearances of a labeled vertex.
We remind the reader that a labeled square (resp.\ circle) vertex is an element in $[m]$ (resp.\ $[d]$), and a labeled edge is an element in $[m] \times [d]$ (see \Cref{rem:labeled-vertices-edges}).

\begin{definition}[Local versus global appearance]
    Given a block-walk, we call each labeled vertex's appearance within the given block a local appearance, and each vertex's appearance throughout the walk a global appearance.

    Moreover, we say that a labeled vertex/edge is making its \emph{global} first/last appearance if it is the first/last appearance of that labeled vertex throughout the walk.
    Similarly, we say it is making its \emph{local} first/last appearance if it is the first/last appearance within the given block in the walk.
\end{definition}

We also need the following definition, which is a special case that we need to handle.
The term ``reverse-charging'' will be clear once we describe our edge charging scheme in \Cref{sec:analysis-of-R}.

\begin{definition}[Reverse-charging step/edge] \label{def:reverse-charging}
    For a given walk, and a block in the walk, we call a step $u\rightarrow v$ reverse-charging if 
   \begin{enumerate}
       \item the underlying edge is making its last \emph{global} appearance throughout the walk;
       \item the underlying edge's first \emph{global} appearance is also in the current block;
       \item (Reverse)  the first appearance of the underlying edge goes from $v$ to $u$.
   \end{enumerate}
\end{definition}


\subsubsection{Further set-up for step-labeling and edge factor scheme}
\label{sec:further-setup-edge-factors}

Our argument for tight matrix norm bounds requires assigning each edge (or step) a step-label (\Cref{def:step-labeling}) that represents whether it is making first/middle/last appearance, and assigning edge factors based on the edge type (recall our edge factor scheme in \Cref{sec:edge-factor-scheme}).
However, further care is warranted for dangling shapes when an edge appears with different Hermite indices in the walk (e.g., appears both as an $h_1$ and $h_2$ edge).
In this case, it is no longer true that an $h_k$ edge needs to appear at least twice in the walk for the random variable to be non-vanishing.
For example, suppose an edge appears as $h_1$, $h_1$, and $h_2$ in the walk.
Then, even though $h_2$ only appears once, this term is non-vanishing under expectation:
\begin{equation*}
    \E_{x\sim\calN(0,1/d)}\Brac{h_1(x)^2 h_2(x)} =
    \E\left[x^2 (x^2-\frac{1}{d}) \right] = \E\left[x^4\right] - \E\left[\frac{x^2}{d}\right] = \frac{2}{d^2}\,.
    \numberthis \label{eq:h1-h1-h2}
\end{equation*}


The matrices that arise in our analysis may contain $h_1, h_2, h_3, h_4$ edges.
For $i\leq 4$, we treat an $h_i$ edge as $i$ edge-copies in our edge factor assignment scheme.


\begin{definition}[Step-labeling scheme for mixed edges]  \label{def:step-labeling-mixed}
    For each step regardless of the Hermite index, assign a step-label to all its edge-copies as follows, 
    \begin{enumerate}
        \item Assign an $F$ step if it is making its first appearance;
        \item Assign an $H$ step if it is making its middle appearance
        \item Assign an $R$ step if it is making its last appearance.
    \end{enumerate}
\end{definition}

We next describe our edge factor assignment scheme.

\begin{lemma}[Edge factor assignment scheme]
\label{lem:mixed-edge-factor-scheme}
    For any graph matrix of size at most $D_V$ that contains $h_1,h_2,h_3,h_4$ edges and walks of length $2q \geq \Omega(\log d)$,
    we can assign values to each edge-copy among the edge's appearance throughout the walk such that
    \begin{enumerate}
        \item Each edge-copy of an $h_1, h_2$ edge with $F/R$ step-label gets assigned a value $\frac{2^{1/4}}{\sqrt{d}}$.
        \item Each edge-copy of an edge with step-label $H$ and any edge-copy of an $h_3, h_4$ edge gets assigned a value $\frac{32 q D_V}{\sqrt{d}}$.
        \item In the case of a random variable appearing as $h_1, h_1, h_2$ (in an arbitrary order), it gets assigned value of $\frac{2}{d^2}$ in total, in particular, each edge-copy of the $h_2$ edge gets assigned a value $\frac{\sqrt{2}}{\sqrt{d}}$.
    \end{enumerate}
\end{lemma}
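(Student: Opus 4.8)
The scheme is verified \emph{one labeled edge at a time}. Since the coordinates $\{G[(i,a)]=v_i[a]\}_{(i,a)\in[m]\times[d]}$ are mutually independent $\calN(0,1/d)$ variables, the product of edge--expectations appearing in the trace bound \Cref{eq:M-beta-trace} factorizes over distinct labeled edges (the only change for mixed edges is that an edge's factor is now $\E_{x}[\prod_j h_{t_j}(x)]$ over its appearances rather than a single moment). So it suffices to show: for every labeled edge that appears in the walk, with Hermite indices $t_1,\dots,t_\ell\in\{1,2,3,4\}$ listed in order of appearance and $D:=\sum_j t_j$ edge-copies, one can assign positive values to the $D$ edge-copies so that (i) their product is at least $\bigl|\E_{x\sim\calN(0,1/d)}\bigl[\prod_{j=1}^{\ell}h_{t_j}(x)\bigr]\bigr|$ and (ii) the individual values are the ones in items 1--3. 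Throughout we use that $\ell$ is at most the number of edge-slots of the walk, $\ell\le 2q\,|E(\tau)|$, and $D\le 4\ell$, while $q=\Omega(\log d)$ and $D_V=\polylog(d)$, so $qD_V$ exceeds any fixed constant once $d$ is large.

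\textbf{Degenerate and small cases.} Since $\prod_j h_{t_j}$ has the parity of $D$, the expectation vanishes unless $D$ is even; it also vanishes when $\ell=1$ since $h_t\perp 1$ for $t\ge 1$. For $\ell=2$, orthogonality of $\{h_t\}$ forces $t_1=t_2=:t$ for a nonzero expectation, and then $\E[h_t(x)^2]=t!\,d^{-t}$. If $t\in\{1,2\}$ we assign $2^{1/4}/\sqrt d$ to each of the $2t$ edge-copies: the product $2^{t/2}d^{-t}$ is $\ge t!\,d^{-t}$ (equality at $t=2$), giving (i). If $t\in\{3,4\}$ these are $h_3,h_4$ edges, to which item 2 assigns $32qD_V/\sqrt d$ per edge-copy, which dominates $t!\,d^{-t}$ trivially. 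Finally, if the index multiset equals $\{1,1,2\}$, a direct computation as in \Cref{eq:h1-h1-h2} gives $\E[h_1(x)^2 h_2(x)]=2/d^2$; assigning $\sqrt2/\sqrt d$ to each of the two $h_2$-copies and $1/\sqrt d$ to each of the two $h_1$-copies yields product exactly $(2/d)(1/d)=2/d^2$, which is item 3.

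\textbf{General high-multiplicity case.} Suppose $\ell\ge 3$ and the multiset is not $\{1,1,2\}$. Generalized H\"older together with \Cref{fact:hermite-moments} (applied with an even $p\in\{\ell,\ell+1\}$) gives
\begin{align*}
\Bigl|\E_{x\sim\calN(0,1/d)}\Bigl[\prod_{j=1}^{\ell}h_{t_j}(x)\Bigr]\Bigr|
&\le \prod_{j=1}^{\ell}\bigl(\E[h_{t_j}(x)^{p}]\bigr)^{1/p}
\le \prod_{j=1}^{\ell}(t_j!)^{1/2}\,(p/d)^{t_j/2} \\
&\le \Bigl(\frac{C\sqrt{\ell}}{\sqrt d}\Bigr)^{D},
\end{align*}
for an absolute constant $C$, using $(t_j!)^{1/2}\le t_j^{t_j/2}\le 2^{t_j}$ for $t_j\le 4$ and $p\le 2\ell$. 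We assign: every edge-copy of an $h_1$ or $h_2$ edge belonging to the \emph{first or last} appearance of the labeled edge gets $2^{1/4}/\sqrt d$; every remaining edge-copy---those of the $\ell-2\ge 1$ middle appearances, together with any edge-copy of an $h_3$ or $h_4$ edge---gets $32qD_V/\sqrt d$. Let $k$ be the number of edge-copies of the second kind; then $k\ge 1$, $D-4\le k\le D$ (at most $2\times 2$ edge-copies are $h_1/h_2$ copies in the first/last appearances), and $\ell\le k+2$ (each of the $\ell-2$ middle appearances contributes at least one edge-copy to $k$). The assigned product is at least $(32qD_V)^{k}d^{-D/2}$, so by the display above it suffices to check $(C\sqrt{\ell})^{D}\le (32qD_V)^{k}$. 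Substituting $D\le k+4$, $\ell\le k+2$ and $k\le O(qD_V)$, a short calculation---splitting on whether $k$ is below an absolute constant or not, so that the exponent ratio $\tfrac{D}{k}\to 1$ exactly when $\log(C\sqrt\ell)$ could be large---verifies this for $d$ large, the constant $32$ being chosen generously so that the tightest case $k=1$ goes through. Multiplying the per-edge-copy values over all labeled edges then bounds the trace as required.

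\textbf{Main obstacle.} The delicate point is that the base rate $2^{1/4}/\sqrt d$ on first/last edge-copies of $h_1,h_2$ edges is, on its own, too small to absorb the $\sqrt{\ell}$-type blow-up H\"older produces for a high-multiplicity edge; the scheme works only because any such edge necessarily has middle ($H$) appearances, and the generous rate $32qD_V/\sqrt d$ there more than covers the deficit. The real care is in the bookkeeping---tracking which edge-copies lie in first/last versus middle appearances (hence the inequalities $D\le k+4$, $\ell\le k+2$), and verifying that $\{1,1,2\}$ is the \emph{only} pattern for which the generic assignment is too lossy for a tight-to-constant bound and therefore must be overridden by item 3; everything else reduces to the Hermite-moment estimate and the elementary inequality $(C\sqrt\ell)^D\le (32qD_V)^k$.
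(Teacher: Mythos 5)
Your proposal is correct and follows essentially the same route as the paper: handle the multiplicity-two $h_1/h_2$ case and the special $h_1,h_1,h_2$ pattern explicitly, and bound every other (high-multiplicity or $h_3/h_4$) case by a Hermite-moment estimate of the form $(O(\sqrt{t})/\sqrt{d})^{t}$, then observe that at most four edge-copies receive the small value $2^{1/4}/\sqrt{d}$ so the remaining copies' factor $(32qD_V/\sqrt{d})$ absorbs the deficit via an elementary case split. The only cosmetic difference is that you invoke generalized H\"older with a single exponent $p\approx\ell$ where the paper applies Cauchy--Schwarz twice grouped by Hermite index; both reduce to the same final inequality, and your sketched "short calculation" does check out with the bounds $D\le k+4$, $\ell\le k+2$, $k\le O(qD_V)$ you set up.
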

\begin{proof}
    We first note that $\E_{x\sim\calN(0,1/d)}[h_k(x)^2] = \frac{k!}{d^k}$.
    For $k=1,2$, if an $h_k$ edge only appears twice and no other Hermite index occurs, then it must have $2k$ edge-copies with step-labels $F$ and $R$, giving an edge factor of $(\frac{2^{1/4}}{\sqrt{d}})^{2k} = \frac{2^{k/2}}{d^k}$, which is larger than $\frac{k!}{d^k}$ for $k = 1, 2$.
    
    Next, we consider the case when $h_3, h_4$ edges are involved or when an edge appears more than twice, i.e., some edge-copies are assigned step-label $H$.
    Let $a_k$ be the number of times $h_k$ appears in the walk, and let $t \coloneqq \sum_{k\leq 4} k \cdot a_k$ be the total number of edge-copies.
    Applying Cauchy-Schwarz twice and \Cref{fact:hermite-moments},
    \begin{align*}
        \Abs{ \E_{x\sim \calN(0,1/d)} \Brac{\prod_{k\leq 4} h_k(x)^{a_k}} }
        &\leq \E\Brac{h_1(x)^{2a_1} h_2(x)^{2a_2}}^{1/2} \E\Brac{h_3(x)^{2a_3} h_4(x)^{2a_4}}^{1/2} \\
        &\leq \prod_{k\leq 4} \E\Brac{h_k(x)^{4a_k}}^{1/4} 
        \leq \prod_{k\leq 4} \Paren{\frac{4k a_k}{d}}^{k\cdot a_k/2} \mper \\
        &\leq \Paren{\frac{4t}{d}}^{t/2} \mper
    \end{align*}
    We next show that the edge factors assigned to the $t$ edge-copies upper bound the above.
    Let $t_0$ be the number of edge-copies that get assigned $\frac{2^{1/4}}{\sqrt{d}}$.
    We must have $0 \leq t_0 < t$ and $t_0 \leq 4$.
    Then, the assignment scheme gives
    \begin{equation*}
        \Paren{\frac{2^{1/4}}{\sqrt{d}}}^{t_0} \cdot \Paren{\frac{32q D_V}{\sqrt{d}}}^{t-t_0}
        \geq d^{-t/2} (32q D_V)^{t-t_0} \mper
    \end{equation*}
    Since the length of the walk is $2q$ and the size of the graph matrix (shape) is $\leq D_V$, we have $t \leq 8q D_V$.
    Thus, if $t \leq 8$, then clearly $(32q D_V)^{t-t_0} \geq (4t)^{t/2}$;
    otherwise, $t-t_0 \geq t/2$ and $(32q D_V)^{t-t_0} \geq (4t)^{t/2}$.
    This shows that the edge factors correctly account for the values from the Hermite characters.

    For the special case when an edge appears as $h_1, h_1, h_2$, the factor $\frac{2}{d^2}$ follows from \Cref{eq:h1-h1-h2}.
    This completes the proof.
\end{proof}

\subsection{Local Analysis for \texorpdfstring{$\calR$}{R}} \label{sec:analysis-of-R}

\begin{lemma} \label{lem:block-val-bound-RA} For some constant $\eps>0$, for any $q<d^\eps$,  the block-value function for $R$ is bounded by 
\[	B_q(\calR ) \leq \frac{1}{10}\,. \]
\end{lemma}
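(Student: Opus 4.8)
The goal is to bound $B_q(\calR) \le \frac{1}{10}$, where $\calR = \calR_1 + \calR_2 + E_{\calR}$ and each of $\calR_1, \calR_2$ is a linear combination (with scalar coefficients $\tfrac{r+s}{s^2-ru}$, $\tfrac{-u-s}{s^2-ru}$ that are $O(1)$ by \Cref{lem:M-invertible}) of graph matrices of the ``dangling path'' form in \Cref{eq:R-terms}. Since $\|E_{\calR}\|_{\op} = o(1)$ by \Cref{prop:R-decomposition}, and the truncation of $A^{-1}$ has only $O(\log d)$-many terms in the path with caps $\tau_1 = \tau_2 = O(\log d)$, $\tau_3 = 3$, $\tau_4 = 1$, it suffices to bound the block-value function $B_q(\tau)$ of each individual dangling shape $\tau$ appearing in the expansion and then sum the resulting geometric-type series. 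So the plan is: (i) fix a single dangling shape $\tau$ — a $U$–$V$ path through a middle square vertex, followed by a length-$k$ dangling walk of injective gadgets from $\{M_\al, M_\beta, M_D, \tfrac1d I_m\}$, plus possibly a final $h_2$ gadget; (ii) invoke \Cref{prop:sum-of-labelings} to write $B_q(\tau) = \sum_{\calL} \vtxcost(\calL)\cdot\edgeval(\calL)$ over step-labelings of $E(\tau)$; (iii) bound this sum by a small explicit quantity using the vertex-factor-with-redistribution scheme (\Cref{sec:vertex-factor-scheme}), the edge-factor scheme for mixed edges (\Cref{lem:mixed-edge-factor-scheme}), and the $\pur$ bounds (\Cref{lem:pur-circle,lem:pur-square,cor:pur-square}); (iv) sum over all shapes in the truncated expansion.

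\textbf{Per-gadget block value.} The heart of the argument is a \emph{local} estimate: each gadget contributes a bounded multiplicative factor to $B_q(\tau)$. First I would set up the base path (\Cref{fig:R-base}): two $h_1$ edges from boundary circle vertices to the middle square vertex — in the dominating labeling one boundary circle vertex makes a middle appearance and the other its first/last, and by the redistribution scheme each boundary vertex contributes $\sqrt{\wt} = \sqrt{d}$ while the square vertex (middle appearance) costs nothing, against edge factors $(1/\sqrt d)^2$, so the base path contributes $\Theta(\sqrt{m})$ from the boundaries times $O(1)$. (Here one must use the ``$v^{*}$'' / parity argument — that every block except the first and last contains a vertex appearing in both an earlier and a later block — to move one edge factor onto $U\cup V$, exactly as indicated in the commented-out material; this is what keeps the boundary cost at $\sqrt d$ rather than $d$ per side.) Then each appended gadget is analyzed by casing on the step-labels of its (2 or 4) edges: for an $M_\al$ gadget, the dominating case ($F\to F$ extending the path) adds one new square vertex ($\sqrt m$ backpay-split), passes through circle vertices at middle appearances (no cost), with edge factors $(1/\sqrt d)^4 = 1/d^2$ and no $\pur$, yielding a per-gadget factor $O(\sqrt m/d^2)$; for $M_\beta$, one new square vertex and edge factor $2/d^2$, giving $O(\sqrt m /d^2)$ similarly — wait, the correct bound is that each $M_\al$ or $M_\beta$ gadget contributes a factor whose magnitude is governed by $\|M_\al\|_{\op}, \|M_\beta\|_{\op} \le 0.1$ from \Cref{lem:T-norm}, because the gadget's block-step is precisely a step of the operator; the $M_D$ gadget contributes $\le O(\sqrt{\log d/d})$; the $\tfrac1d I_m$ gadget contributes $\le 1/d$. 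The final $h_2$ gadget (for $\calR_1$) contributes $O(\sqrt{m/d^2}) = O(\sqrt m/d)$. Step-labelings involving $S$ or $H$ at any gadget force middle appearances on both endpoints of that edge, killing a $\sqrt{\wt}$ factor and costing only $\pur \le (2q D_V)^{O(1)} = \polylog(d)$ and edge factors $\le (\polylog(d)/\sqrt d)^{O(1)}$ — hence every such labeling loses a factor $\polylog(d)/\sqrt d = o(1)$ relative to the all-$F$/all-$R$ labeling, so these are lower-order and summable. Thus per shape,
\[
B_q(\tau) \le C_0 \cdot \frac{\sqrt{m}}{d} \cdot (1+o(1))^{} \cdot \prod_{\text{gadgets } g} \rho(g),
\]
where $\rho(M_\al), \rho(M_\beta) \le 0.1$, $\rho(M_D) \le O(\sqrt{\log d/d})$, $\rho(\tfrac1d I_m) = 1/d$, the leading $\sqrt m/d \le \sqrt c$ comes from the base path (boundary $\sqrt{d}\cdot\sqrt{d}$ redistributed, against $\sqrt{m}$... more precisely the base plus final $h_2$ give $\le \sqrt{m/d^2}\cdot\sqrt{m} \le$ something $O(\sqrt{c})$ when $m \le cd^2$), and $C_0$ is an absolute constant.

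\textbf{Summing the series.} Given the per-shape bound, summing over the truncated expansion of $A^{-1}$ (\Cref{def:truncated-A-inverse}) amounts to summing $\sum_{k_1\le\tau_1}\sum_{k_2\le\tau_2}\sum_{k_3\le 3}\sum_{k_4\le 1}\binom{k_1+\cdots+k_4}{k_1,\dots,k_4}\cdot 0.1^{k_1}\cdot 0.1^{k_2}\cdot O(\sqrt{\log d/d})^{k_3}\cdot (1/d)^{k_4}$, which is a convergent geometric-type series bounded by $O(1)$ (indeed $\le 1 + O(1)\cdot(0.1+0.1+o(1)) < 2$); multiplying by the $O(1)$ Woodbury scalars, by the base-path/final-$h_2$ factor $O(\sqrt{c})$, and by the absolute constant $C_0$, and choosing $c$ (equivalently $C_R$) small enough, gives $B_q(\calR) \le \frac{1}{10}$. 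The $o(1)$ contributions from all $S$/$H$ labelings, from gadget-incursions (\Cref{def:gadget-incursion}, which only introduce extra vertex coincidences and hence extra $1/\sqrt d$-type decay), and from $E_{\calR}$ are absorbed.

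\textbf{Main obstacle.} The genuinely delicate part is \emph{not} the dominating all-$F$/all-$R$ labelings but the bookkeeping that makes the boundary vertices $u, v \in U\cup V$ (and the top square vertex $SQ_1$, and the special vertex $v^{*}$ from the parity argument) pay only $\sqrt{\wt}$ rather than $\wt$, \emph{while simultaneously} absorbing all $\pur$ charges without a logarithmic blowup. Concretely: one must exhibit, in every block, an ``extra'' edge factor (the edge $e^{*}$ of the commented-out draft — typically the edge into the final circle vertex, or an edge in an $M_D$/$M_\beta$ gadget) to hand to $U\cup V$, handle the reverse-charging edges of \Cref{def:reverse-charging} correctly so that vertices appearing in only one block still collect enough edge factors, and verify that every $S$ or $H$ step that generates $\pur$ also generates a compensating half-edge-factor of slack (Lemmas on $S/H$ slack and $M_D$ slack in the draft), so that the $\pur$ cost $(2qD_V)^{O(1)}$ is always multiplied by an $o(1)$ edge-factor deficit and stays lower-order. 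This is exactly the content of the local analysis that \Cref{sec:analysis-of-R} must carry out gadget-type by gadget-type and case by case; once it is in place, the summation above is routine.
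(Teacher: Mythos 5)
Your overall architecture (fix a dangling shape, bound its block value gadget-by-gadget via \Cref{prop:sum-of-labelings}, then sum over the truncated expansion of $A^{-1}$ and absorb $E_{\calR}$ and the Woodbury scalars) matches the paper's road map, but the two places where you actually need a proof are the two places you either assert something false-in-spirit or defer. First, the step ``each $M_\al$ or $M_\beta$ gadget contributes a factor governed by $\|M_\al\|_{\op},\|M_\beta\|_{\op}\le 0.1$ because the gadget's block-step is precisely a step of the operator'' is not a valid move inside the block-value framework. The block value of a gadget embedded in a dangling shape is a local combinatorial quantity computed from the redistribution scheme, not an operator norm of the standalone matrix: in the paper's \Cref{lem:block-val-bound-RA} wrap-up, an $M_\al$ gadget on the dangling path contributes roughly $\Theta(\sqrt{m}/d)$ (one square vertex at $\sqrt m$, two circle vertices at $\sqrt d$ each, four edge-copies at $1/\sqrt d$), which happens to be of the same order as $\sqrt c$ but is obtained by a different accounting than the norm bound of \Cref{lem:M-alpha-norm-bound} (whose dominant labeling, $R\to F$ with factor $2m/d^2$, does not even arise the same way on the dangling path). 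If one could substitute operator norms of sub-shapes for their block values, the entire graph-matrix analysis of $\calR$ would be unnecessary — one would just write $\|\calR_1\|_{\op}\lesssim\|T_0\|_{\op}\cdot(\dots)$ — and that naive route is exactly what loses the logarithmic factors the paper is built to avoid.

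Second, the parts you label ``main obstacle'' and defer are the actual content of the paper's proof, and your sketched plan for the $\pur$ factors is the one the paper explicitly shows is insufficient here. You propose that every $S$/$H$ step generating $\pur$ also generates compensating edge-factor slack; but for $\calR$ a square vertex sitting on the $U$--$V$ path can accumulate arbitrarily many unclosed $F$ edges across blocks with \emph{no} surprise or high-mul steps at all (this is stated at the start of the paper's $\pur$ section for $\calR$), so there is no $S$/$H$ slack to charge against the resulting unforced returns. The paper's fix is a genuinely new $\pur$ bound for square vertices via global \emph{middle appearances} ($\pur(v)\le 3\,\mathsf{MidApp}(v)+3(s(v)+h(v))+D(v)$), paired with the observation that each middle appearance is assigned an edge-copy worth $O(1/\sqrt d)$ by the top-down charging, which is what kills the $(2qD_V)^{O(1)}$ cost. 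Likewise, the bookkeeping that lets $U\cup V$, $\mathsf{SQ}_1$ (in the $U=V$ blocks), and the final $h_2$ gadget pay only $\sqrt{\wt}$ — the block-reserve edge, the critical-edge search, the unbreakable odd-multiplicity $U$--$V$ path, and the reverse-charging rule of \Cref{def:reverse-charging} — is the substance of \Cref{prop:top-down-charging}, not a routine verification. As written, your proposal is a plausible plan whose crux is missing, and whose stated shortcut for the per-gadget factors would not survive scrutiny.
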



We first state our vertex factor assignment scheme (with redistribution) which assigns vertex factors to labeled vertices according to their global appearances. It is the same one as described in \Cref{sec:vertex-factor-scheme}.

\begin{mdframed}[frametitle= {Vertex factor assignment scheme}]
\begin{enumerate}
    \item For each labeled vertex $i$ making its first or last \emph{global} appearance, assign a factor of $\sqrt{\wt(i)}$;
    \item For each labeled vertex $i$ making its middle \emph{global} appearance, assign a factor of $1$ if it is reached via an $R$ step, otherwise assign a factor $2q \cdot D_V$ as well as its corresponding $O(1)$ $\pur$ factors.
\end{enumerate}
\end{mdframed}


We next describe the scheme that assigns edge-copies to vertices.
In most cases, we charge edge-copies (on the dangling path) to the vertex it leads to, unless it is a \emph{reverse-charging} edge (\Cref{def:reverse-charging}).
Recall that we define a step $u\to v$ to be reverse-charging if it is making its last global appearance and if its first global appearance is also in the current block going from $v \to u$.

\begin{mdframed}[frametitle= {"Top-down" edge-copy charging primitive}]
\begin{enumerate}
    \item Assign both edges on the $U-V$ path to the first square vertex in the middle;
    \item For any step $u \to v$ before the final $h_2$ gadget, assign the edge to $v$ unless this is a reverse-charging edge (\Cref{def:reverse-charging}), in which case we assign to $u$;
    \item For the final $h_2$ gadget (if any), we reserve its assignment from the current scheme.  
\end{enumerate}
\end{mdframed}

%

See \Cref{sec:illustration} for an illustration of the above scheme.
We next show the following invariant throughout the walk, 

\begin{proposition}  \label{prop:top-down-charging}
We can assign each edge-copy to at most one vertex,
\begin{enumerate}
    \item for a circle vertex, it is assigned $1$ edge-copy if it is making first/last \emph{global} appearance, and $2$ edge-copies if both;
    \item for a square vertex on the dangling path, it is assigned $2$ edge-copies if it is making  first/last \emph{global} appearance, and $4$ edge-copies if both;
    \item for any square vertex's \text{global} middle appearance yet \text{local} first appearance in the block, it is assigned at least $1$ edge-copy;
    \item no surprise/high-mul step is assigned to any vertex's \emph{global} first/last appearance.
\end{enumerate}
\end{proposition}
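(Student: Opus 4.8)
The plan is to prove \Cref{prop:top-down-charging} by a direct structural case analysis of the closed block-walk, classifying every step by its step-label and by whether each of its two endpoints is making a global (and local) first, middle or last appearance. The first bullet --- each edge-copy is charged to at most one vertex --- is immediate from the charging primitive: every step not on the $U$--$V$ path is charged to exactly one endpoint, and the two $U$--$V$-path edges of a block are charged to the middle square vertex $SQ_1$, so no edge-copy is ever charged twice. For the remaining bullets the relevant arithmetic is that, by \Cref{lem:mixed-edge-factor-scheme}, an $F$ or $R$ edge-copy of an $h_1$ or $h_2$ edge carries value $2^{1/4}/\sqrt d$; hence one such edge-copy pays for one $\sqrt d=\sqrt{\wt(\text{circle})}$ factor and two of them pay for one $\sqrt m=\sqrt{\wt(\text{square})}$ factor (using $m\le cd^2$). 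So the counts ``$1$ per first/last'' for circle vertices and ``$2$ per first/last'' for square vertices are exactly what the redistributed vertex-factor scheme of \Cref{sec:vertex-factor-scheme} consumes.

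For a circle vertex $v$ on the dangling path making its global first appearance, the step reaching $v$ uses a brand-new labeled edge and lands on a brand-new vertex, so it is an $F$ step, and an $F$ step is never reverse-charging (\Cref{def:reverse-charging}): the underlying edge's first global appearance is this very step and it points into $v$, violating the ``reverse'' condition; hence the step is charged to $v$, giving $\ge1$ edge-copy, and since it is an $F$ step this also contributes to bullet (4). At $v$'s global last appearance the incident closing edge is an $R$ step; if it is not reverse-charging it is charged to $v$, and if it is reverse-charging then by definition its first appearance is also in the same block running from $v$ back to its source, so \emph{both} appearances of that edge are charged to the source --- precisely the endpoint that then needs the extra copy --- and one checks this does not rob $v$, since in that situation $v$ is simultaneously entered by a different $R$ edge forced to close by the parity of the walk. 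The genuinely delicate sub-case is the boundary circle vertices: the two $U$--$V$-path edges are charged to $SQ_1$, so a boundary circle vertex must draw its one or two edge-copies from a surplus elsewhere. I would reconcile this as in the planted-affine-plane analysis of \cite{PTVW22}: in every block but the first and last, a parity argument produces a path of odd-multiplicity edges between the two boundary vertices, hence a vertex appearing in both an adjacent earlier and an adjacent later block (needing no edge-copy in the current block, free to donate one); together with the extra edge-copy always available at the final circle vertex of the dangling path, this supplies the deficit.

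For square vertices, note first that a square vertex never lies on a block boundary, so it is always entered by a step within its block, and the step(s) entering it at a \emph{local} first appearance can never be reverse-charging (a reverse-charging edge into $v$ would require $v$ to have appeared earlier in the same block), so they are charged to $v$; an $M_\alpha$ gadget enters a square vertex by two $h_1$ edges and an $M_\beta$, $M_D$ or final-$h_2$ gadget by one $h_2$ edge --- two edge-copies in every case --- which already gives bullet (3) (with $\ge1$, in fact $\ge2$). The vertex $SQ_1$ additionally collects the two $U$--$V$-path edge-copies, and its first dangling gadget contributes two more edge-copies of value ($M_\alpha$: two $h_1$ edges; $M_\beta$/$M_D$: one $h_2$; a $\tfrac1d I_m$ gadget: the scalar $\tfrac1d=(1/\sqrt d)^2$), so $\ge4$ in total, covering the $4$ needed when it is both globally first and last; a dangling square vertex collects $2$ edge-copies at its global first appearance and, by the parity fact that at its global last appearance the entering edges are $R$ edges charged to it, $2$ more, giving bullet (2). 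Bullet (4) then follows from the overall structure: an $S$ step lands on an already-visited vertex, hence can never account for a global first appearance, and by design the edge-copies from $S$ and $H$ steps are reserved for the $\pur$ analysis (\Cref{sec:resolving-confusion}) rather than used to pay vertex factors --- consistency holds precisely because bullets (1)--(3) already cover every global first/last appearance using only $F$, $R$ and reverse-charging steps.

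The main obstacle is the accounting part of the second paragraph: making the ``donation'' bookkeeping for boundary circle vertices watertight while simultaneously respecting the reverse-charging rule and not double-counting the surplus at the final circle vertex. This is where the parity/separator argument must be combined with a careful audit of which vertex in each block is the spare one, and it is also where the mixed-edge cases of \Cref{lem:mixed-edge-factor-scheme} (an edge appearing as $h_1,h_1,h_2$, or the reserved final-$h_2$ gadget) have to be folded in without breaking the per-edge-copy value bounds.
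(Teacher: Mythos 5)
Your outline reproduces the easy half of the paper's argument (interior vertices charged by the edges that discover them, with reverse-charging covering the case that the global first and last appearance fall in the same block), but the places where you defer or assert are exactly where the paper's proof does its real work, and one of your assertions is false under the stated charging primitive. Concretely: in the $U=V$ case you claim $\mathsf{SQ}_1$ ``additionally collects \ldots two more edge-copies from its first dangling gadget,'' but the top-down primitive charges every step leaving $\mathsf{SQ}_1$ to its \emph{destination} (the circle or square vertex below), not to $\mathsf{SQ}_1$, and those destinations need that charge for their own first appearances. Moreover, when $U=V$ the two ``$U$--$V$ path'' copies assigned to $\mathsf{SQ}_1$ are two copies of the \emph{same} underlying edge, so when $\mathsf{SQ}_1$ makes both its global first and last appearance in the block it is short by two copies and no reverse-charging edge protects it. The paper resolves exactly this case by a separate analysis: if a surprise visit arrives at $\mathsf{SQ}_1$ its reverse-charged $R$ copies supply the deficit; otherwise one inspects the final departure gadget, relabels an $h_2$ step as a special $H^{*}$ step (or extracts a full $1/\sqrt{d}$ factor from the $H$-copies in an $M_\alpha$ gadget), and for the $A^{-1}1_m$ terms one spends the normalization $|\tfrac{u+s}{s^2-ru}|=O(1/d)$. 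None of this is recoverable from your sketch.

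The second gap is the boundary circle vertices when $U\neq V$. You invoke the parity argument (which the paper also uses, via the odd-multiplicity $U$--$V$ path and the middle-appearing vertex $v^{*}$) plus ``the extra edge-copy always available at the final circle vertex,'' but that extra copy is \emph{not} always available: when the final $h_2$ attachment is a reverse-charging edge it must be given to the last square vertex's last appearance, and the final circle vertex may itself be making its last appearance. The paper handles this with an iterative ``critical edge'' procedure that climbs the chain of $M_\beta$ gadgets through successive reverse-charging $h_2$ edges until it finds an $h_2$ edge charged to a circle vertex, one copy of which becomes the block-reserve; for terms without a final $h_2$ gadget the reserve instead comes from the $O(1/\sqrt{d})$ normalization. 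You flag this bookkeeping as ``the main obstacle'' but do not supply it. Finally, item (4) of the proposition is not ``by design'': as stated, the primitive would charge an $S$/$H$ step to a vertex whose global last (but not first) appearance occurs in that block; the paper fixes this by swapping that $S$/$H$ copy with the later $R$ copy of the same edge, which is available precisely because the vertex is not being reverse-charged in that block. Without the $\mathsf{SQ}_1$ analysis, the critical-edge/block-reserve construction, and the swap, the proposition is not proved.
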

\begin{proof}

We start by giving the argument when $U\neq V$, and then show it can be modified into an argument for $U=V$.

\parhead{Charging vertices outside $U\cup V$ and the final $h_2$ gadget} 
The above scheme applies immediately to vertices that appear in the current block while not in $U\cup V$ or the last square vertex as well as the final circle vertex it connects to in the $h_2$ gadget.	

It follows by observing that any circle (resp.\ square) vertex that makes its first \emph{global} appearance is the destination of $1$ (resp. $2$) edge-copies that are not reverse-charging, in which case the edge-copies are assigned to the particular vertex.
This holds analogously for vertices making the last \emph{global} appearance but not the first \emph{global} appearance in the block, since the edges are not reverse-charging.
We note that this is true for the first square vertex as well since we assign both edges on the $U-V$ path to the first square vertex.


 On the other hand, for vertices making both the first and the last \emph{global} appearances, consider the assignment until the final $h_2$ gadget, the charging above gives us $1$ and $2$ edges for each such vertex when it first appears \emph{locally}. Furthermore, notice that each of these edges need to be closed, and they are assigned to the destination vertex, i.e., the particular vertex in inspection, and this completes the proof of our assignment restricted to vertices outside $U\cup V$ and the final $h_2$ gadget.

\parhead{Charging the final gadget and finding "block-reserve"} \label{para:finding-block-reserve}
The goal is to identify an edge-copy as block-reserve, and assign vertex factors to corresponding edges in the final $h_2$ gadget (if any). We first consider the case when there is no final $h_2$ attachment, i.e.\ the term from $\frac{u+s}{s^2-ru} A^{-1} 1_m$.
Recalling our bounds from \Cref{lem:M-invertible} that $s^2-ru = \Omega\Paren{\frac{m}{d}}$, and $s=O(\sqrt{d}), |u|=O(1)$ (here the weaker bound on $s$ suffices), we observe that \[ 
\Abs{\frac{u+s}{s^2-ru}} = O\Paren{\frac{\sqrt{d}}{m/d}} = O\Paren{\frac{1}{\sqrt{d}}} \mcom
\]
and the normalizing constant can be regarded as an edge-copy. This is assigned as the block-reserve factor if there is no final $h_2$ gadget, and there is no vertex factor involved when there is no final $h_2$ gadget since this may be the last appearance of a square vertex while its factor has already been assigned to the edges that first lead to this vertex in the current block.

We now proceed to assign edges from the final $h_2$ gadget to vertex factors involved in the final $h_2$ gadget, moreover, we will identify one edge-copy of assigned factor at most $\frac{\sqrt{2}}{\sqrt{d}}$ as "block-reserve" that allows us to further charge vertices in $U\cup V$.
 
For the vertices in the final $h_2$ gadget, we observe the following, \begin{enumerate}
	\item in the final gadget, the last square vertex cannot be making its  first \emph{global} appearance yet it may be making its last appearance (since it is on the gadget boundary, we consider it appears in both the final $h_2$ attachment gadget and the gadget that precedes it);
	\item the last circle vertex may be making either its first or last \emph{global} appearance but it cannot be both;
	\item the $h_2$ edge in-between has not yet been assigned, and we can split it as two edge-copies, each of weight $\frac{2^{1/4}}{\sqrt{d}}$ if $F/R$ under the \emph{global} step-labeling.
	\end{enumerate}
We now observe the following,
\begin{enumerate}
    \item If the final $h_2$ attachment is not a reverse-charging edge assigned to the source square vertex, the square vertex has also been assigned by $2$ factors if making its first/last \emph{global} appearance, and $4$ if both, in the current block by the previous charging. In this case, note that we can reserve one of the two edge-copies, that is a factor of at most $\frac{2^{1/4}}{\sqrt{d}} $,\begin{itemize}
	    \item If the circle vertex is making first/last appearance, we either have both edge-copies receiving $F/R$ labeling, or a mix of $H,R$ labeling. In the first case, we have two assigned factors of $\frac{2^{1/4}}{\sqrt{d}}$. Assign one to the circle vertex's first/last appearance, and another to the block-reserve. In the second case, we have a mix of $H,R$ edges, that we at least have the underlying random variable appears twice as $h_1$ edges and at least once as an $h_2$ edge, by moving $\tilde{O}(1)$ polylog factors to the second appearance of the underlying random variable locally, we have again two edge copies of value $\frac{2^{1/4}}{\sqrt{d}}$, and the assignment follows from above.
	    \item If the circle vertex is making a middle appearance, we may assign related $\pur$ factors and vertex-factor cost to one edge-copy such that one edge-copy is of value $\Tilde{O}(\frac{1}{\sqrt{d}})$ while the other is still at most $\frac{2^{1/4}}{\sqrt{d}} $; assign the edge-copy with $\Tilde{O}(\frac{1}{\sqrt{d}})$ to the middle appearance factor and assign the edge-copy with factor $\frac{2^{1/4}}{\sqrt{d}} $ for block-reserve.
	\end{itemize}
	\item If the  final attachment $h_2$ edge is a reverse-charging step that corresponds to an edge whose $F$ copy 
	leads to the final square vertex, we assign both $h_2$ edges to the final square vertex as the square vertex may be making its last \emph{global} appearance. Note that if the square vertex makes its first \emph{global} appearance in the current block, it has already been assigned $2$ edges.
	
	 This leaves the last circle vertex potentially uncharged as it may be making its last appearance as well. Furthermore, we need to find one more "reserve" edge-copy for the block which would then be used to charge $U\cup V$.

    \item \textbf{Finding critical edge:} we now give a procedure to identify the critical edge, that is, an $h_2$ edge assigned to a circle vertex in the above top-down charging process. The process maintains a current circle vertex and its current gadget along the top-down path. In particular, the gadget considered is an $M_\beta$ gadget throughout. The process starts from vertex $s$, that is the circle vertex involved in the final $h_2$ attachment, and the gadget considered is the one in which $s$ opens up the $F$ step of the current edge $e^*$, the $h_2$ edge in the final gadget. Note that this is an $M_\beta$ gadget. We now case on the step-labeling of the top half of the $M_\beta$ gadget in inspection, in particular, the edges leading to the circle vertex $s$ in that particular gadget,
    \begin{itemize}
        \item This is an $F$, non-reverse-charging $R$, or $H$ step assigned to $s$, this is the critical edge we aim to find, as we have two edge-copies assigned to a circle vertex, and the process terminates.
        \item This is a reverse-charging $R$ edge: update the circle vertex to be the top vertex of the current $M_\beta$ gadget $s$, and update the edge $e^*$ to be the $h_2$ edge in the top half of the current gadget, and repeat the above process. Note that the updated gadget must appear, and additionally, on top of the current gadget in the dangling path.
    \end{itemize}
	We now observe that this process ultimately terminates since each time we move up towards the top of the dangling path. Once the critical edge is found, note that it contains two edge-copies, assign one to the vertex's factor, and another to the block-reserve. In the case of $H$ edges involved, locally assign the edge-value as $\frac{1}{\sqrt{d}}$ and $\tilde{O}(\frac{1}{\sqrt{d}})$, and assign the copy with $\frac{1}{\sqrt{d}}$ for block-reserve, while the other for vertex's factor.
	\end{enumerate}

\paragraph{Reserving surprise/high-mul step from vertex-factor assignment outside $U\cup V$} 
We first observe that in the above assignment, it is clear that the edges assigned to vertex's first appearance cannot be surprise-visit nor high-mul step. That said, it suffices for us to consider the assignment for vertex's last appearance factor (whose first appearance is not in the same block). Consider such vertex's first local appearance, they may either be $H/S$ edges if not $R$ under the global edge-labeling. If so, since the vertex is making the last appearance in the current block, each corresponds to an $R$-step in this block. Moreover, observe that any such $R$-step is intended for "reverse-charging" if the vertex in inspection is making both first and last global appearance in the block, and thus it is not assigned to the vertex factor of the source vertex. That said, it suffices for us to swap the $H/S$ step with the $R$ step so that no surprise/high-mul step is assigned to vertex's (polynomial) factor.

	At this point, for a block with $U\neq V$, we have assigned \begin{enumerate}
		\item $1$ or $2$ edge for each vertex's first/last appearance in the current block outside $U_\al\cup V_\al$ depending on the vertex's type;
		\item $1$ edge-copy of value $O(\frac{1}{\sqrt{d}})$ has been identified as the block-reserve.
		\item it is also straightforward to observe that any edge assigned for vertex's first/last appearance cannot be a surprise visit nor high-mul visit.
	\end{enumerate}

%

	\paragraph{Charging circle vertices in $U\cup V$}
	\begin{proposition}[Unbreakable $U-V$ path]
	There is always a $U-V$ path such that each edge along the path is of odd multiplicity. Call this path $P_{\textnormal{safe}}$.
\end{proposition}
\begin{proof}
 It suffices for us to restrict our attention to paths of only $h_1$ edges. By our gadget property, each vertex except $U_\al\cup V_\al$ is incident to an even number of $h_1$ edges, while only vertex copy in $U_\al\cup V_\al$ is of odd degree. Suppose the path is broken, consider the (maximal) component $C_U$ connected to $U_\al$ (but not $V_\al$), we first observe that the edge-multiplicity of $E(C_U, V(\al)\setminus C_U)$ must be even, as otherwise, some edge is of odd multiplicity, and the edge is included inside $C_U$ as opposed to be across the cut. That said, \[ 
 \sum_{v\in C_U} \deg(v) = 2 \cdot \mul(E(C_U)) + \mul(E(C_U, V(\al)\setminus C_U)) \mper
 \]
 Observe that the LHS is odd as any vertex copy except $U_\al$ has even degree, and we have exactly $1$ odd-degree vertex copy. On the other hand, RHS follows as each edge-multiplicity in $E(C_U)$ contributes a factor $2$ to degree of vertices in $C_U$ while edges across the cut contribute $1$ for each multiplicity; since $E(C_U, V(\al)\setminus C_U)$ is even by connectivity argument above, the RHS is also even and we have a contradiction.
\end{proof}
\begin{corollary}
	There is at least one vertex $v^*$ making middle-block appearance, i.e. it is a vertex that appears in previous blocks, and is appearing again in future blocks.
\end{corollary}

	We recall our charging so far that we have one edge-copy reserved, while we have two circle vertices in $U\cup V$ uncharged. And we now show that the single-edge copy is sufficient. i.e., at most one vertex factor is picked up among these two vertices.

	Consider the path $P_{\textnormal{safe}}$, and observe that it passes through both vertices in $U\cup V$. Take $v^*$ to be the first vertex on the path that pushes out an $F/H/S$ edge. In the case $v^* = U$, note that it suffices for us to assign the reserve factor for the vertex in $V$; similarly, assign the reserve factor for $U$ for $v^*= V$. In the case $v^*\notin U\cup V$, note that the previous scheme assigns at least one edge for $v^*$ if it is a circle, and $2$ if it is a square when $v^*$ first appears in the current block, and this is in fact not needed since $v^*$ is making a middle appearance. That said, we have at least $2$ factors, $1$ from the reserve factor, and $1$ from the edge-assignment for $v^*$ in the previous scheme, we now assign these two factors for $U \cup V$.
	
\begin{remark}
	$v^*$ is picked such that it is either reached by an $R$ edge, or it is the boundary vertex $U_\al$. In other words, no factor is needed for specifying $v^*$ of the block.
\end{remark}	
	
	and this completes the proof to our proposition when $U\neq V$.
	
	\paragraph{Analysis for $U=V$}
 We now consider the case when $U=V$.

\begin{definition}[$\mathsf{SQ}_1$: first square vertex in $R$ term]
    For each $R$ term, call the first square vertex on top of the dangling path the $\mathsf{SQ}_1$ vertex. 
\end{definition}
\begin{remark}
This vertex is an exception from any other square vertex as its first appearance might be the destination of two edge-copies that correspond to the same underlying random variable. This happens when an edge making its first and last appearance at the same time for the $U=V$ term.
\end{remark}

For $U=V$, the charging for vertices outside is identical except for the first square vertex $\mathsf{SQ}_1$: since the two edges assigned to it now correspond to the same underlying edge-copy, and may now receive $F, R$-labeling. That said, both edges are now assigned to the first square vertex. If the square vertex is making either first/last appearance but not both in the current block, the current assignment is sufficient. However, there are no reverse-charging edge copies protecting $\mathsf{SQ}_1$, and that warrants a further analysis.

We follow the previous strategy in identifying block-reserve: even though for $U=V$, the vertex $U=V$ is by definition not contributing any vertex-appearance factor,  we now intend the block-reserve factor to pay for either additional $\pur$ factor due to the dormant step stemming from $U=V$, or the last appearance factor of $\mathsf{SQ}_1$ but not both. 

\begin{enumerate}
    \item $\mathsf{SQ}_1$ makes both first and last appearance in the current block: in this case, the edges assigned to $\mathsf{SQ}_1$ in the beginning of the top-down charging process are assigned to the first appearance of  $\mathsf{SQ}_1$. That said, it remains for us to identify two extra edge-copies for the last appearance factor. 
    
\textbf{Charging for terms without final $h_2$ attachment:} these terms come with a normalization of $|\frac{u+s}{s^2-ru}| = O(\frac{1}{d})$, and these are two edge copies that we assign to the last appearance of $\mathsf{SQ}_1$. 

 \textbf{Charging for terms with final $h_2$ attachment:}
     we first note that any edge incident to $\mathsf{SQ}_1$ must be closed, and further case in whether there is any surprise visit arriving at $\mathsf{SQ}_1$ throughout the dangling path. 
\begin{enumerate}

%
        
		\item  Suppose that there is no surprise visit arriving at $\mathsf{SQ}_1$ throughout the dangling path.
		



		 Consider the final departure from  $\mathsf{SQ}_1$, and note that it pushes out at least two edge-copies.
  \begin{itemize}
      \item If this is an $M_\beta$ gadget, let $r$ be the circle vertex it connects to via an $h_2$ edge. Suppose the random variable corresponding to $(\mathsf{SQ}_1,r)$ first appears as an $h_2$ edge, in this case, the circle vertex has been assigned two edge-copies the first time it appears as it is reached using $h_2$ edge, that said, we can reserve the other two edge copies from the final attachment for the missing last appearance factor of $\mathsf{SQ}_1$;
          \item If this is an $M_\beta$ gadget, yet the random variable $(\mathsf{SQ}_1, r)$ first appears as an $h_1$ edge in the current block. Note that there is at least one more edge-copy of $h_1$ edge that is currently assigned an $H$-label, observe that we can replace its label to be $R$, and assign both $F/R$ copies to the vertex factor of the circle vertices. We now relabel the final $h_2$ step to be $H^*$, and note that these two edges get assigned a value at most $\frac{2}{d}$, and assign them both to the missing last appearance factor of $\mathsf{SQ}_1$. 
          \item To see that $H^*$ does not require extra $D_V\cdot q$ factor, we observe that this is a circle vertex traversed before in the block, and for each block, $H^*$ is unique. That said, it suffices for us to use a special label in $[2]$ when $H^*$ first appears in the block to identify the edge. Moreover, note that swapping $H^*$ with the $R$ step does not add to $\pur$ factor of the destination circle vertex as this edge no longer appears; 
              \item If this is an $M_\al$ gadget, let $r_1, r_2$ be the circle vertices the square vertex is
    connected to. Note that for each $r_i$, there must be at least two more edge-copies of random variable of $(\mathsf{SQ}_1, r_i)$ that receive $H$ label, i.e., we have a factor of $\tilde{O}(\frac{1}{d})$. That said, we may reassign the factors and extract one full factor of $\frac{1}{\sqrt{d}}$ (with the remaining being $\tilde{O}(\frac{1}{\sqrt{d}})$, and since we have two of them that combine to a factor of $\frac{1}{d}$, assign it to the missing last appearance factor of $\mathsf{SQ}_1$. 
  \end{itemize}

        \item There is a surprise visit arriving at $\mathsf{SQ}_1$. We first observe this must be either an $h_2$ edge, or a pair of distinct $h_1$ edges. The surprise visit must be closed already, and their underlying $R$ copies are intended for reverse-charging in the top-down charging scheme for the last appearance of $\mathsf{SQ}_1$. In this case, the factor for the last appearance has already been assigned edges. 
    \end{enumerate}
   \item  \textbf{Finding "block-reserve" for $U=V$:}
 $SQ_1$ is not making both first and last appearance in the current block, we first note that the departure from $U\cup V$ is special as this is the only vertex that can appear on the boundary again without being reached by any edge, and this is not a dormant gadget $M_D$. That said, the most recent departure may contribute $\pur$ factor to the vertex in $U\cup V$, and we now identify an edge-copy for this factor. 
\begin{enumerate}
 \item  \textbf{Charging for terms without final $h_2$ attachment:} for $A^{-1}1_m$ term where the final $h_2$ gadget is missing, we pick up a normalization constant of $O(\frac{1}{d})$ and we designate this as the block-reserve.
	
\item \textbf{Charging for terms with final $h_2$ attachment:} for terms with the final $h_2$ gadget, the analysis of finding block-reserve from the case $U\neq V$ applies identically for finding an $h_2$ edge that gets assigned to a circle vertex in the top-down traversal process. In particular, we may designate one edge-copy among the two copies of the identified $h_2$ edge as a block-reserve to charge the corresponding $\tilde{O}(1)$ factors from $\pur$.

\begin{remark}
	Note that the edge identified from the above process is in fact stronger, as it carries a factor of $O(\frac{1}{\sqrt{d}})$ as opposed to $\tilde{O}(\frac{1}{\sqrt{d}})$. That said, since for $U\neq V$ terms, the block-reserve is only assigned for $\tilde{O}(1)$ factors from $\pur$, either is sufficient.
\end{remark}
	
\end{enumerate}
\end{enumerate}	

\end{proof}

\subsection{Illustration via diagrams}
\label{sec:illustration}

\begin{center}
    \begin{tikzpicture}[
      mycircle/.style={
         circle,
         draw=black,
         fill=white,
         fill opacity = 1,
         text opacity=1,
         inner sep=0pt,
         minimum size=20pt,
         font=\small},
      mysquare/.style={
         rectangle,
         draw=black,
         fill=white,
         fill opacity = 1,
         text opacity=1,
         inner sep=0pt,
         minimum height=20pt, 
         minimum width=20pt,
         font=\small},
      myarrow/.style={-Stealth},
      node distance=0.6cm and 1.2cm
      ]
      \draw (-5,2.5) ellipse (.4cm and .6cm);
      \draw (0,2.5) ellipse (.4cm and .6cm);
      \draw[orange] (-2.5,-6) .. controls (-6.5,-4) and (-6.5,-2) .. (-2.5,0);
      \draw[orange] (-4, -1.5) -- (-4, -4.5);
      \draw[orange] (-1, -1.5) -- (-1, -4.5);
      
      \draw (5,2.5) ellipse (.4cm and .6cm);
      \draw[orange] (2.5,-6) .. controls (6.5,-4) and (6.5,-2) .. (2.5,0);
      \draw[orange] (4, -1.5) -- (4, -4.5);
      \draw[orange] (1, -1.5) -- (1, -4.5);
      
      \draw[orange] (-5,2.5) .. controls (-1,4) and (1,4) .. (5,2.5);
      \draw[orange] (-2.5, 0) -- (2.5, 0);
      \draw[orange] (-2.5, -8) -- (2.5, -8);  
      
      \node[mycircle]  at (-5, 2.5) (u) {$u$};
      \node[mycircle]  at (0, 2.5) (v) {$v$};
      \node[mysquare]  at (-2.5, 0) (a) {$a$};
      \node[mycircle]  at (-4, -1.5) (b) {$b$};
      \node[mycircle]  at (-1, -1.5) (c) {$c$};
      \node[mysquare]  at (-2.5, -3) (d) {$d$};
      \node[mycircle]  at (-4, -4.5) (e) {$b$};
      \node[mycircle]  at (-1, -4.5) (f) {$c$};
      \node[mysquare]  at (-2.5, -6) (g) {$a$};
      \node[mycircle]  at (-2.5, -8) (t) {$t$};
      
      \node[mycircle]  at (5, 2.5) (w) {$u$};
      \node[mysquare]  at (2.5, 0) (a2) {$a$};
      \node[mycircle]  at (1, -1.5) (b2) {$b'$};
      \node[mycircle]  at (4, -1.5) (c2) {$c'$};
      \node[mysquare]  at (2.5, -3) (d2) {$d'$};
      \node[mycircle]  at (1, -4.5) (e2) {$b'$};
      \node[mycircle]  at (4, -4.5) (f2) {$c'$};
      \node[mysquare]  at (2.5, -6) (g2) {$a$};
      \node[mycircle]  at (2.5, -8) (t2) {$t$};
    \foreach \i/\j/\txt/\p/\pp in {
      u.315/a.135/ /above/0.5,
      v.225/a.45/ /above/0.5,
      a.225/b.45/ /above/0.5,
      a.315/c.135/ /above/0.5,
      b.315/d.135/ /above/0.5,
      c.225/d.45/ /above/0.5
    }
    \draw[cyan] [myarrow] (\i) -- node[font=\small,\p,pos=\pp] {\txt} (\j);
    \foreach \i/\j/\txt/\p/\pp in {
      e.45/d.225/ /above/0.5,
      f.135/d.315/ /above/0.5,
      g.135/e.315/ /above/0.5,
      g.45/f.225/ /above/0.5
    }
    \draw[red] [myarrow] (\i) -- node[font=\small,\p,pos=\pp] {\txt} (\j);
    \foreach \i/\j/\txt/\p/\pp in {
      g.270/t.90/2/right/0.5
    }
    \draw[green] [myarrow] (\i) -- node[font=\small,\p,pos=\pp] {\txt} (\j);
    
    \foreach \i/\j/\txt/\p/\pp in {
      v.315/a2.135/ /above/0.5,
      w.225/a2.45/ /above/0.5
    }
    \draw[blue] [myarrow] (\i) -- node[font=\small,\p,pos=\pp] {\txt} (\j);
    \foreach \i/\j/\txt/\p/\pp in {
      a2.225/b2.45/ /above/0.5,
      a2.315/c2.135/ /above/0.5,
      b2.315/d2.135/ /above/0.5,
      c2.225/d2.45/ /above/0.5
    }
    \draw[cyan] [myarrow] (\i) -- node[font=\small,\p,pos=\pp] {\txt} (\j);
    \foreach \i/\j/\txt/\p/\pp in {
      e2.45/d2.225/ /above/0.5,
      f2.135/d2.315/ /above/0.5,
      g2.135/e2.315/ /above/0.5,
      g2.45/f2.225/ /above/0.5
    }
    \draw[red] [myarrow] (\i) -- node[font=\small,\p,pos=\pp] {\txt} (\j);
    \foreach \i/\j/\txt/\p/\pp in {
      g2.270/t2.90/2/right/0.5
    }
    \draw[green] [myarrow] (\i) -- node[font=\small,\p,pos=\pp] {\txt} (\j);
    \end{tikzpicture}
    \end{center}

    This figure illustrates the charging scheme. The light blue edges are $F$ edges and we assign their factor to their destination. The dark blue edges are $R$ edges whose first appearance is in a different block. We also assign these edges to their destination. The red edges are reverse-charging $R$ edges whose first appearance is in the current block. We assign these edges to the destination of the corresponding $F$ edge (which is generally the source for this edge).

    Note that two edge factors are missing from $v$. We obtain these factors from the two green edges pointing towards $t$.
    
    \begin{center}
    \begin{tikzpicture}[
      mycircle/.style={
         circle,
         draw=black,
         fill=white,
         fill opacity = 1,
         text opacity=1,
         inner sep=0pt,
         minimum size=20pt,
         font=\small},
      mysquare/.style={
         rectangle,
         draw=black,
         fill=white,
         fill opacity = 1,
         text opacity=1,
         inner sep=0pt,
         minimum height=20pt, 
         minimum width=20pt,
         font=\small},
      myarrow/.style={-Stealth},
      node distance=0.6cm and 1.2cm
      ]
      \draw (0,2) ellipse (.4cm and .6cm);
      \draw (0,2) ellipse (.5cm and .75cm);
      \draw[orange] (0,-6) .. controls (-4,-4) and (-4,-2) .. (0,0);
      \draw[orange] (-1.5, -1.5) -- (-1.5, -4.5);
      \draw[orange] (1.5, -1.5) -- (1.5, -4.5);
      \draw[orange] (0, -8) -- (1.5, -4.5);
      \node[mycircle]  at (0, 2) (u) {$u$};
      \node[mysquare]  at (0, 0) (a) {$a$};
      \node[mycircle]  at (-1.5, -1.5) (b) {$b$};
      \node[mycircle]  at (1.5, -1.5) (c) {$c$};
      \node[mysquare]  at (0, -3) (d) {$d$};
      \node[mycircle]  at (-1.5, -4.5) (e) {$b$};
      \node[mycircle]  at (1.5, -4.5) (f) {$c$};
      \node[mysquare]  at (0, -6) (g) {$a$};
      \node[mycircle]  at (0, -8) (t) {$c$};
    \foreach \i/\j/\txt/\p/\pp in {
      a.225/b.45/ /above/0.5,
      a.315/c.135/ /above/0.5,
      b.315/d.135/ /above/0.5,
      c.225/d.45/ /above/0.5
    }
    \draw[cyan] [myarrow] (\i) -- node[font=\small,\p,pos=\pp] {\txt} (\j);
    \foreach \i/\j/\txt/\p/\pp in {
      e.45/d.225/ /above/0.5,
      f.135/d.315/ /above/0.5,
      g.135/e.315/ /above/0.5
    }
    \draw[red] [myarrow] (\i) -- node[font=\small,\p,pos=\pp] {\txt} (\j);
    \foreach \i/\j/\txt/\p/\pp in {
      g.45/f.225/ /above/0.5,
      t.90/g.270/2/left/0.5
    }
    \draw[brown] [myarrow] (\i) -- node[font=\small,\p,pos=\pp] {\txt} (\j);
    
      \draw (4,2) ellipse (.4cm and .6cm);
      \draw (4,2) ellipse (.5cm and .75cm);
      \draw[orange] (4, -2) -- (6, 0);
      \node[mycircle]  at (4, 2) (u2) {$u'$};
      \node[mysquare]  at (4, 0) (a2) {$a'$};
      \node[mycircle]  at (6, 0) (b2) {$b'$};
      \node[mysquare]  at (4, -2) (t2) {$b'$};

    \foreach \i/\j/\txt/\p/\pp in {
      a2.00/b2.180/2/above/0.5
    }
    \draw[cyan] [myarrow] (\i) -- node[font=\small,\p,pos=\pp] {\txt} (\j);
    \foreach \i/\j/\txt/\p/\pp in {
      t2.90/a2.270/2/right/0.5
    }
    \draw[brown] [myarrow] (\i) -- node[font=\small,\p,pos=\pp] {\txt} (\j);      
    \end{tikzpicture}
    \end{center}
    
    \begin{center}
    \begin{tikzpicture}[
      mycircle/.style={
         circle,
         draw=black,
         fill=white,
         fill opacity = 1,
         text opacity=1,
         inner sep=0pt,
         minimum size=20pt,
         font=\small},
      mysquare/.style={
         rectangle,
         draw=black,
         fill=white,
         fill opacity = 1,
         text opacity=1,
         inner sep=0pt,
         minimum height=20pt, 
         minimum width=20pt,
         font=\small},
      myarrow/.style={-Stealth},
      node distance=0.6cm and 1.2cm
      ]
      \draw (-1.5,1.5) ellipse (.4cm and .6cm);
      \draw (1.5,1.5) ellipse (.4cm and .6cm);
      \draw[orange] (0,-4.5) .. controls (-2,-3.5) and (-2,-2.5) .. (0,-1.5);
      \node[mycircle]  at (-1.5, 1.5) (u) {$u$};
      \node[mycircle]  at (1.5, 1.5) (v) {$v$};
      \node[mysquare]  at (0, 0) (a) {$a$};
      \node[mycircle]  at (0, -1.5) (b) {$b$};
      \node[mysquare]  at (0, -3) (c) {$c$};
      \node[mycircle]  at (0, -4.5) (t) {$b$};
    \foreach \i/\j/\txt/\p/\pp in {
      u.315/a.135/ /above/0.5,
      v.225/a.45/ /above/0.5,
      b.270/c.90/2/right/0.5
    }
    \draw[cyan] [myarrow] (\i) -- node[font=\small,\p,pos=\pp] {\txt} (\j);
    \foreach \i/\j/\txt/\p/\pp in {
      t.90/c.270/2/right/0.5
    }
    \draw[red] [myarrow] (\i) -- node[font=\small,\p,pos=\pp] {\txt} (\j);
    \foreach \i/\j/\txt/\p/\pp in {
      a.270/b.90/2/right/0.5
    }
    \draw[green] [myarrow] (\i) -- node[font=\small,\p,pos=\pp] {\txt} (\j);
    \end{tikzpicture}
    \end{center}

\subsection{\texorpdfstring{$\pur$}{Pur} bound for square vertices in \texorpdfstring{$\calR$}{R}}
The prior $\pur$ bound does not apply well for square vertices in $\calR$, in particular, it should be pointed out that even before non-trivial intersection within each block along the dangling path, the $\pur$ argument based on $1$-in-$1$-out (or its slightly generalized version of $2$-in-$2$-out) falls apart in the analysis for $\calR$.

In particular, one may consider a walk on $R_S$ with the square vertex along the $U-V$ path fixed throughout the walk. It is easy to verify the the fixed square vertex may have growing unclosed $F$ edges without any surprise visit/high-mul step. That said, it is not sufficient to use the slack from such factors to offset the potential confusion due to $R$ edges. 

In this section, we give a new argument to handle the $\pur$ factor in the vanilla setting of $R$ when there is no non-trivial intersection along the dangling gadget-path, and then extend it for the general cases of $R$ where each block is not necessarily injective due to intersection across gadgets. 

\paragraph{Gap from square middle appearance} For starters, we observe that a vertex may only push out unforced returns if it is making a middle appearance given that it maintains a list of incident edges, including the additional information which are closed. When it is making the last appearance, any currently unclosed edge needs to be closed, and therefore shall be pushed out, giving us a fixed set of edges being pushed-out (where we momentarily ignore the question whether one needs to distinguish among the edges in the edge-set). This immediately renders us the following bound on $\pur$ readily,
\begin{lemma}[$\pur$ factor via middle appearance: beginner version]
For any vertex $v$, let $\mathsf{MidApp}(v)$ be the number of middle appearances of $v$ throughout the walk, we have \[ 
\pur(v) \leq 3 \cdot \mathsf{MidApp}(v)
\] 
provided each square vertex pushes out at most $3$ edges in each block throughout the walk, and each block is vertex-injective.
\end{lemma}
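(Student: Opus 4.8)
The plan is to prove the bound by a block-by-block charging argument, in the same spirit as \Cref{lem:pur-circle} and \Cref{lem:pur-square}, but replacing the surprise/high-mul counters $s_t(v),h_t(v)$ (which, as observed above, can stay $0$ while $v$ accumulates unclosed $F$-edges) by the number of blocks in which $v$ makes a middle appearance. First I would fix notation: since each block is vertex-injective, within any block $v$ is realized by at most one shape-vertex and is therefore visited exactly once during that block's traversal of the dangling shape; list the blocks in which $v$ appears, in walk-order, as $B_1,\dots,B_\ell$, so that $B_1$ (resp.\ $B_\ell$) realizes the global first (resp.\ last) appearance of $v$ and $\mathsf{MidApp}(v)=\max(\ell-2,0)$. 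Every return step leaving $v$ happens inside one of these blocks, so it suffices to bound, for each $B_i$, the number of return steps out of $v$ in $B_i$ that are unforced and hence require a $\pur$ label in $2q\cdot D_V$.

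Next I would show that the first block $B_1$ contributes nothing. Every labeled edge incident to $v$ inside $B_1$ is making its global first appearance, since traversing an edge at $v$ forces $v$ to appear in that block; hence every step incident to $v$ in $B_1$ carries step-label $F$ or $S$, never $R$, so there is no return step out of $v$ in $B_1$ at all. (The only edge-repetition that can occur inside an injective block is an $M_D$ ``double edge'', but this joins a square to a circle and is traversed as $v\to w\to v$, so its closing step is a return \emph{into} $v$, not out of it; and the degenerate ``$U=V$'' configuration of the $\calR$-shape, in which the two $U$--$V$ path edges at $\mathsf{SQ}_1$ would collapse into one labeled edge and a last appearance of it could create an unforced $R$-step at $\mathsf{SQ}_1$ inside its first block, is excluded here precisely by vertex-injectivity.)

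Then I would show that the last block $B_\ell$ also contributes nothing. In $B_\ell$ every edge incident to $v$ is making its global \emph{last} appearance (it cannot reappear, as that would force $v$ to appear later), so every departure from $v$ in $B_\ell$ is a return step, and the collection of edges these returns close is exactly the set of \emph{all} currently-unclosed edges at $v$, a set determined entirely by the portion of the walk already reconstructed. Consequently no $\pur$ label is needed to tell the decoder \emph{which} edges to close. As flagged just before the lemma, we momentarily disregard the lower-order ($\polylog$-per-block) cost of specifying, within this determined set, which of the at most $D_V$ shape-positions at $v$ is matched to which unclosed edge; this is the single point the beginner version defers to the refined analysis.

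Finally, for each intermediate block $B_i$ with $1<i<\ell$, vertex-injectivity gives a single visit to $v$, out of which the walk departs along at most $3$ edges by hypothesis, so at most these $\le 3$ departures can be unforced returns needing a $\pur$ label, and $B_i$ contributes at most $3$. Summing, $\pur(v)\le 0+3(\ell-2)+0=3\,\mathsf{MidApp}(v)$, as claimed. The main obstacle, and the reason this is only the ``beginner'' version, is the last-block step: making fully rigorous that closing edges from a determined set is free requires handling the within-set matching, which becomes genuinely nontrivial once blocks are allowed to be non-injective (so $v$ may be visited several times in one block and the gadgets may overlap), and together with the separate treatment of $\mathsf{SQ}_1$ and of the $U=V$ terms this is exactly what the subsequent, stronger $\pur$ bound must absorb.
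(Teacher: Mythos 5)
Your proposal is correct and follows essentially the same route as the paper, whose own justification is exactly the observation preceding the lemma: a vertex incurs no $\pur$ cost at its first appearance, at its last appearance the set of edges to be closed is forced (with the within-set matching question explicitly deferred, as you also note), and each middle appearance in a vertex-injective block contributes at most $3$ by the push-out hypothesis, giving $\pur(v)\leq 3\cdot \mathsf{MidApp}(v)$. Your block-by-block write-up, including the caveats about the $M_D$ double edge and the $U=V$/$\mathsf{SQ}_1$ configuration handled later in the paper, is just a more explicit rendering of that same argument.
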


With the above $\pur$ bound, it may not be meaningful if we cannot obtain a slack from $\mathsf{MidApp}$. Fortunately, this is indeed the case for our setting, and we first observe this in the vanilla setting where there is no gadget-incursion within each block, (in particular, this already applies immediately if the edges are injective witin each block), \begin{enumerate}
    \item We have assigned each square vertex at least $1$ edge when it is making a middle appearance: to see this, the top-down charging scheme assigns each square vertex $2$ edges when the square-vertex appears for the first time in the block; in the case of charging $U\cup V$, $1$ edge may be re-routed from a square vertex making a \emph{global} middle appearance. That said, at least $1$ edge is assigned to each square vertex when it makes a middle appearance.
    \item Note that for a fixed vertex $v$ in a given block, its \emph{local} first appearance at the given block may be corresponding to a \emph{global} middle appearance, and such \emph{mismatch} is the source of our slack;
    \item Observe that each middle-appearance corresponds to a mismatch described above (provided each block is injective), and each such mismatch of local-global first appearance assigns a vertex making global middle appearance one edge-copy, that is a factor of $O(\frac{1}{\sqrt{d}})$;
    \item However, since each vertex's middle appearance does not get assigned any vertex factor in our scheme, we may use the $\frac{1}{\sqrt{d}}$ gap to offset the $3$ $\pur$ factors corresponding to the particular \emph{global} middle appearance, that is \[ 
    O(\frac{1}{\sqrt{d}}) \cdot (q\cdot D_V)^3  = o_d(1)\,.
    \]
\end{enumerate}
\paragraph{Extension to gadget intersections} To handle dangling paths with potentially intersecting gadgets, it should be pointed out the $\pur$ bound goes through as stated while we do not necessarily have a gap from middle-appearance. In particular, it is possible now that in a given block, a square vertex appears for various times and only has $2$ edges assigned to it for its first \text{local} appearance, as any of its subsequent appearance in the given block follow via closing some $F$ edge that gets opened up earlier in the work, and thus assigned to the destination as opposed to the given square vertex.

Towards generalizing the prior argument, we consider a specific subclass of \emph{global} middle appearance of a vertex through the block walk,
\begin{definition}[Middle appearance for a square vertex]
    For a given walk and a given block-step $\mathsf{BlockStep}_i$, we say a labeled vertex $v\in [m]$ makes a middle appearance in $\mathsf{BlockStep}_i$ if \begin{enumerate}
        \item $v$ makes appearance at $\mathsf{BlockStep}_i$;
        \item $v$ makes appearance at some $\mathsf{BlockStep}_j$ for $j<i$, and at some  $\mathsf{BlockStep}_{j'}$ for $j'>i$.
    \end{enumerate}
\end{definition}
    With some abuse of notation, we continue to let $\mathsf{MidApp}(v)$ denote the number of middle appearances of $v$. Note that this is clear when we are working with blocks that do not have block-injectivity, while it is equivalent to the previous definition in vertex-injective blocks.
    
In particular, we emphasize that following the above definition, in the case of a labeled vertex first appears at $\mathsf{BlockStep}_i$, and appears multiple times in various gadgets in the dangling-path of $\mathsf{BlockStep}_i$, it is not considered as making a middle appearance at $\mathsf{BlockStep}_i$.

\begin{lemma}[$\pur$ bound for square vertices]
For any square vertex $v$ that does not push out dormant gadgets, at any time-$t$, 
\[  
\pur_t(v) \leq  P_t(v) \leq  3 \cdot \mathsf{MidApp}_t(v) + 3(s_t(v) + h_t(v)) 
\]
where we define \[ 
P_t(v)  \coloneqq \#(\text{$R$ steps closed from $v$ by time $t$}) + \#((\text{unclosed edges incident to $v$ at $t$ }) )  - 4 \mper
\]
In other words, by assuming at $4$ possible return legs to be fixed each time a vertex is on the boundary, the number of unforced returns from a vertex by time $t$ is at most $3 \cdot \mathsf{MidApp}_t(v) + 3(s_t(v) + h_t(v))$.
\end{lemma}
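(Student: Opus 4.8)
The plan is to prove the two inequalities separately, the first by the usual ``presumed-forced'' accounting and the second by induction on the successive block-steps at which $v$ appears. This generalizes the argument of \Cref{lem:pur-square}: there a square vertex had exactly $2$ incident edges per visit and the ``$2$-in-$2$-out'' structure sufficed, whereas here a square vertex on the dangling path of $\calR$ may simultaneously sit on the $U$--$V$ path and push out a gadget, and — crucially — may reappear several times inside a single block via gadget incursions (\Cref{def:gadget-incursion}). The new device needed to absorb this is the term $3\cdot \mathsf{MidApp}_t(v)$: every time $v$ reappears in a \emph{new} block (a middle appearance) we allow the potential $P_t(v)$ to grow by a bounded amount, and this growth is later paid for (outside this lemma) by the $O(1/\sqrt d)$ slack coming from the local-versus-global first-appearance mismatch in the charging scheme of \Cref{prop:top-down-charging}.

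\textbf{First inequality.} For $\pur_t(v)\le P_t(v)$, recall that a potential unforced return out of $v$ is an $R$ step leaving $v$ whose destination is not already determined; we presume up to $4$ return legs at $v$ to be forced, which is exactly the ``$-4$'': a square vertex that both lies on a $U$--$V$ path and pushes out an $M_\alpha$ or $M_\beta$ gadget is incident to at most $4$ edges that may be regarded as pinned down by the block structure. Every $R$ step out of $v$ beyond those contributes $+1$ to $\#(R\text{ steps closed from }v)$, and every edge presently incident to $v$ and still unclosed may in the worst case be closed by an unforced return in the future. Summing, the number of potential unforced returns out of $v$ up to time $t$ is at most $\#(R\text{ steps closed from }v\text{ by }t)+\#(\text{unclosed edges incident to }v\text{ at }t)-4=P_t(v)$.

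\textbf{Second inequality, by induction.} We induct over the block-steps $i_1<i_2<\cdots$ at which $v$ appears, processing each entire block as a single event; between consecutive such block-steps $P$ is unchanged. In the base case, $i_1$ is the block where $v$ is created — via an $F$ step, or $v=\mathsf{SQ}_1$ sitting on the $U$--$V$ path — and since $v$ does not push a dormant gadget, a direct check shows that the edge-copies incident to $v$ inside block $i_1$, minus the $4$ presumed-forced legs, leave $P_{t}(v)\le 3(s_t(v)+h_t(v))$, the only positive contributions coming from $S/H$ reentries of $v$ within block $i_1$. For the inductive step, assume the bound holds just before $v$'s appearance in block $i_\ell$. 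If this appearance is \emph{not} a middle appearance (i.e.\ $v$ never appears in a later block), then every unclosed edge at $v$ must be closed inside block $i_\ell$, so $v$'s departure opens no net unclosed edge and $P$ does not increase. If it \emph{is} a middle appearance, then whether $v$ is first entered in block $i_\ell$ by (forced) $R$ steps or by an $S/H$ step, the subsequent departure of $v$ down the dangling path opens at most $3$ new unclosed edge-copies (from a single $M_\alpha$ or $M_\beta$ gadget, the $U$--$V$ path contributing only already-accounted legs), so $P$ grows by at most $3$; meanwhile $\mathsf{MidApp}_t(v)$ increases by $1$, and if the entry was $S$ or $H$ then $s_t(v)+h_t(v)$ increases too, giving enough room. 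Within-block reappearances of $v$ in block $i_\ell$ are reached either by closing edges opened earlier in the same block (net change $\le 0$) or by $S/H$ steps counted in $s_t,h_t$, so they add nothing beyond this accounting. In every case the induction hypothesis propagates, yielding $P_t(v)\le 3\mathsf{MidApp}_t(v)+3(s_t(v)+h_t(v))$.

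\textbf{Main obstacle.} The delicate point — and the reason the clean $2$-in-$2$-out argument of \Cref{lem:pur-square} fails here — is the scenario flagged just before the lemma: a single square vertex pinned on the $U$--$V$ path can accumulate arbitrarily many unclosed $F$ edges over successive blocks with no intervening $S$ or $H$ step, so there is no ``local'' slack to charge against. The resolution is that each such block is a middle appearance of $v$, and the book-keeping above trades ``extra unclosed edges opened at $v$'' for ``$+3$ per middle appearance''. Making this rigorous requires (i) fixing precisely which $\le 4$ legs at $v$ count as presumed-forced so that the base case survives even in the presence of within-block gadget incursions, and (ii) checking that a within-block reappearance of $v$ never simultaneously opens new unclosed edges and escapes being charged — which is exactly where one invokes the local injectivity of each gadget (so that only gadget incursions, not arbitrary revisits, are possible) together with the edge-assignment invariant of \Cref{prop:top-down-charging}.
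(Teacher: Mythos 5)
Your overall strategy is the same as the paper's (induct over the successive blocks in which $v$ appears, and pay for growth of the potential with $3\cdot\mathsf{MidApp}$ plus $3(s_t+h_t)$), and your treatment of the first inequality and of the middle-appearance case is essentially the paper's. The minor difference in how you count $\mathsf{MidApp}$ (you credit the current appearance as middle using knowledge of the future, while the paper retroactively credits the previous appearance once $v$ shows up again) is harmless.

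The genuine gap is in the base case together with your handling of the non-middle (last) appearance. The paper's induction only closes because of a strengthened claim for the block of $v$'s first global appearance: there $P_t(v)\le 3(s_t(v)+h_t(v))-2$, since $v$ is created incident to only $2$ edges while $4$ return legs are presumed forced. That $-2$ of slack is then exactly consumed in the second block before any $\mathsf{MidApp}$ credit exists: the arrival at $v$ in the second block may close only \emph{one} of the (up to two) edges opened by the final departure of the first block — e.g.\ when $v$ is the $\mathsf{SQ}_1$ vertex entered by the single $U$-side edge of the $U$--$V$ path — costing $+1$, and the first local departure in that block may push out $3$ rather than $2$ edges (one toward $V$ plus two down the dangling gadget), costing another $+1$. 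With your base case $P\le 3(s+h)$ (no slack) and your assertion that at a non-middle appearance ``$P$ does not increase,'' the scenario in which $v$ appears in exactly two blocks yields, at evaluation times in the second block, $P$ as large as $3(s+h)+2$ while the claimed right-hand side is $3\cdot 0+3(s+h)$; the ``every unclosed edge must be closed inside the block'' argument only controls the net change over the whole block, not the value of $P$ at the moments when $R$ steps actually leave $v$. Your ``main obstacle'' paragraph correctly flags that the presumed-forced legs must be chosen carefully, but the missing ingredient is precisely this quantitative slack cascade ($-2$ in the first block, $-1$ after the arrival in the second block, $+1$ within-block from the possible three-edge first departure), which is the actual content of the paper's proof; without it the induction as you state it does not close.
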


\begin{proof}
    The first inequality is definitional as we assume each vertex may have $4$ edges being fixed, which incurs a cost of $[4]$ for each vertex each time it pushes out an $R$ step. Analogous to previous $\pur$ bounds, the base case is immediate when vertex first appears in the walk. In particular, we note that the above bound can be strengthened for the $\mathsf{BlockStep}_i$ in which vertex $v$ makes (globally) its first appearance. 
    
\begin{claim}
	For any square vertex $v$, for the $\mathsf{BlockStep}_i$, at any time-$t$ within the $\mathsf{BlockStep}_i$ , 
\[  
P_t(v) \leq  3(s_t(v) + h_t(v))  - 2 \mper
\]
\end{claim}
\begin{proof}
Notice this is immediate when vertex $v$ first appears, as it is incident to $2$ edges, and thus we have $P^{(1)}_t = -2 $ (as we have a $-2$ term since we maintain $3$ edges to be fixed instead of just $2$). The invariant holds as any subsequent departure opens up at most $2$ $F$ edges, and any subsequent arrival either closes both edges, or either arrival is along a surprise/high-mul visit, and give a net-gain of at most $3$ in the number of unclosed $F$ edges. This proves our claim.
\end{proof} 

It remains for us to consider the appearance of $v$ in subsequent blocks, in particular, we start with the \emph{locally} first appearance of the subsequent block. Let $t$ be the time-mark in which $v$ is making its first \emph{local} appearance in the current-block. Notice at time $t$ when the vertex first appears, it may be arrived via a single edge (as opposed to $2$ due to the $U-V$ path), and therefore it may push out at most $3$ edges.

    
	\paragraph{Appearance at the second block}
	If not, this is currently the second block in which $v$ appears: applying the claim on the first block, and observe that the most recent departure opens up at most $2$ $F$ edges, while the current arrival closes $1$ (unless $H$ or $S$, in which case a net-gain of $3$ suffices), we have \[ 
	P_t(v) \leq 3(s_t(v)+h_t(v)) -2 + 2 -1 =3(s_t(v)+h_t(v)) - 1
	\]
	where the $+2$ corresponds to the $2$ $F$ edges opened up at the most recent departure, and the $-2$ term corresponds to the term in the hypothesis on first-block, and the $-1$ comes from the current arrival closing at least one $R$ edge.
	
	For any subsequent appearance of $v$ in the current block, if any, the following is immediate,  \[ 
P_{t'}(v) - P_t(v) \leq 3 (s_{t'}(v) - s_t(v)) + 3 (h_{t'}(v) - h_t(v)) + 1
\]
as we observe that \begin{enumerate}
    \item The departure from the first vertex may open up at most $3$ edges instead of $2$;
    \item Any subsequent departure and arrival closes $2$ edges, and opens up at most $2$ edges, hence the previous argument applies, giving a net-gain of $+1$ due to the extra opening in the first departure.
\end{enumerate}
That said, for any appearance of $v$ in the second block at time $t'$, we have \[ 
P_{t'}(v)  =  P_{t(v)} + (P_{t'}(v) - P_t(v)) \leq 3(s_{t'}(v)+h_{t'}(v)) 
\]
	
\paragraph{Appearance at the future blocks} For any block, let $t_0$ be the local first appearance, and $t_1$ be the local final appearance, applying the above argument gives \[ 
P_{t_1}(v) - P_{t_0}(v) \leq  3 (s_{t_1}(v) - s_{t_0}(v)) + 3 (h_{t_1}(v) - h_{t_0}(v)) + 1\,.
\]
That said, it suffices for us to bound $P_{t_0}(v)$. This is bounded by \[ 
P_t(v) \leq  3 \cdot \mathsf{MidApp}_t(v) + 3(s_t(v) + h_t(v)) 
\]
Consider the base case when $v$ appears at the third block, the most recent departure opens up at most $2$ new $F$ edges. To offset this, we use the gain in $ \mathsf{MidApp}_t(v) $, as the appearance of $v$ in the second block is now counted as a middle appearance once $v$ appears in the third block, which gives a $+3$ on the RHS. The bound extends to any subsequent block immediately.
	This completes our proof of $\pur$ bound.
%
\paragraph{Extension to dormant gadgets}
To capture the $\pur$ change due to dormant gadget, we note that for each square vertex at each block, we can assume one dormant edge it pushes out being fixed while assign a $\pur$ factor for any other dormant gadget it pushes out at that block. Notice this is a cost at most $2$ for each square vertex throughout the walk, as we may need to assume $1$ dormant edge for its first appearance, and another for its last appearance. For any middle appearance, it suffices for us to assign a $\pur$ factor for any dormant edge it pushes out, as opposed to all-but-one in the case of first/last appearance. This prompts to define the following counter,
\begin{definition}[Dormant-excess]
    For each square vertex $v$, at any block $\mathsf{BlockStep}_i$, let $D_i$ be the number of dormant-gadgets it pushes out at this block,
    define $D_i(v) \coloneqq D_i - 1[ \text{first/last appearance at i} ]$, and additionally, the counter function is defined for the whole walk by taking \[
    D(v)  = \sum_{i:v \text{ appears in  } \mathsf{BlockStep}_i} D_i(v) \mper
    \]
\end{definition}
\begin{remark}
	By using an extra additive constant of $2$, each return using dormant leg is either forced or accounted for in $D(v)$.
\end{remark}
\begin{claim}
	Each dormant-excess gadget corresponds to a circle vertex reached using an $h_2$ edge, and gets assigned a factor of at most $\tilde{O}(\frac{1}{\sqrt{d}})$ in the combinatorial charging argument.
\end{claim}
\begin{proof}
Note that in the combinatorial charging argument, we have assigned both $h_2$ edges to the circle vertex's vertex factor, unless the square vertex is potentially making its first and last appearance at the same block, which is not ruled out by the definition of dormant excess. That said, for the circle vertex, it gets assigned at most a factor of $\sqrt{d}$ in our scheme, while both edges assigned at least $\tilde{O}(\frac{1}{\sqrt{d}})$ each, and combining the above yields the desired.
\end{proof}
\begin{corollary}
	By assuming at most $6$ return legs to be forced,  the number of unforced-return from $v$ throughout the walk is at most\[ \pur(v) \leq  3\cdot \mathsf{MidApp}_t(v) + 3(s_t(v) + h_t(v))  + D(v)\,. \]
\end{corollary}
This allows us to effectively ignore $\pur$ factor in the block-value analysis, as each $\pur$ factor can be distributed among  surprise visit, high-mul visit, or middle appearance such that each is assigned at most $3$ $\pur$ factors. Moreover, since each comes with a $O(\frac{1}{\sqrt{d}})$ factor, combining with the assigned $\pur$ factor contributes an $o_d(1)$ term provided $\frac{q^3}{\sqrt{d}} =o_d(1)$, which is sufficient for us as we set $q = d^{\eps}$ for a small enough constant $\eps$.
\end{proof}

\subsection{Wrapping up}
Given the edge-assignment scheme to vertex appearance, we now show why this immediately gives the desired bound on $B(\calR)$: we have the following factors, \begin{enumerate}
	    \item Each circle vertex gives a factor of $\sqrt{d}$ for their first/last appearance; the assignment gives each vertex one \emph{global} $F/R$ edge-copy each, that is a factor of  $\frac{\sqrt{2}}{\sqrt{d}}$, giving a bound of \[
	 \sqrt{d} \cdot \frac{\sqrt{2}}{\sqrt{d}} = \sqrt{2}\,;
	    \]
	    \item Each square vertex gives a factor of $\sqrt{m}$ for their first/last appearance, and the assignment above gives  \emph{global} $F/R$ each two edge-copies each, that is a factor of \[ 
	 \sqrt{m} \cdot    \frac{2}{d} \leq \frac{2\sqrt{ m}}{d}\,;
	    \]
	    \item Each square vertex that makes a \emph{global} middle appearance gives a $O(1)$ factors of $\pur$, while each such vertex appearance is assigned one edge, that is a factor of \[
	   (2q\cdot D_V)^{O(1)} \cdot  O\left( \frac{\sqrt{2} q^2}{\sqrt{d} }\right) \ll 1\,.
	    \]
	\end{enumerate}
	
\begin{corollary}
For each vertex's appearance and edges assigned to it, we have a factor of at most \[ 
(1+o_d(1)) \cdot   \sqrt{d} \cdot \frac{\sqrt{2}}{\sqrt{d}} \cdot 12 \leq (1+o_d(1)) \cdot 6\sqrt{2}
\]
for a circle vertex, and \[ 
(1+o_d(1)) \cdot 6 \cdot   \sqrt{m} \cdot \frac{2}{d } \leq (1+o_d(1)) \frac{12\sqrt{m}}{d}
\]
for a square vertex where the factor $6$ for each vertex comes from our bound that each vertex arrived using a forced $R$ edge can be specified at a cost of $[6]$.
\end{corollary}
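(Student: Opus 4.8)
The plan is to obtain the stated per-appearance bounds by reading them directly off the three assignment schemes already in place: the vertex-factor assignment scheme with redistribution of \Cref{sec:vertex-factor-scheme}, the top-down edge-copy charging invariant \Cref{prop:top-down-charging}, and the edge-factor assignment scheme \Cref{lem:mixed-edge-factor-scheme}. Concretely, I would fix a single block-step of a length-$2q$ closed walk on $\calR$ and, for every labeled vertex appearing in that block-step, multiply the vertex factor assigned to the appearance by the edge values of the edge-copy (or copies) charged to it by \Cref{prop:top-down-charging}; the Corollary is precisely the assertion that this product is always $O(1)$ for a circle vertex and $O(\sqrt m/d)$ for a square vertex.

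For a circle vertex $v$, the redistribution scheme charges the weight factor $\sqrt{\wt(v)}=\sqrt d$ only at its global first and global last appearances (a middle appearance reached by a return picks up only the constant $[6]$ for naming a presumed-forced leg, while a middle appearance not reached by a return picks up $2qD_V$ together with $O(1)$ $\pur$ factors). By \Cref{prop:top-down-charging}, a global first/last appearance is the destination of a global $F/R$ edge-copy, and by the part of the proof of \Cref{prop:top-down-charging} that reserves surprise/high-mul steps away from first/last appearances this copy may be assumed to carry step-label $F$ or $R$, so by \Cref{lem:mixed-edge-factor-scheme} its value is at most $\frac{\sqrt 2}{\sqrt d}$. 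Together with the constant $[6]$ coming from \Cref{lem:pur-circle} (and its $\calR$-refinement, in which at most six return legs per vertex are presumed forced), the product for such an appearance is at most $(1+o_d(1))\cdot\sqrt d\cdot\frac{\sqrt 2}{\sqrt d}\cdot 6=(1+o_d(1))\cdot 6\sqrt 2$. The square-vertex case is identical except that a global first/last appearance is the destination of \emph{two} $F/R$ edge-copies (\Cref{prop:top-down-charging}), whose values multiply to at most $\frac 2 d$, so the product is at most $(1+o_d(1))\cdot 6\cdot\sqrt m\cdot\frac 2 d=(1+o_d(1))\cdot\frac{12\sqrt m}{d}$.

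The remaining step is to verify that every other type of appearance contributes only a $1+o_d(1)$ multiplicative factor. A global middle appearance not reached by a return carries a combinatorial cost $(2qD_V)^{O(1)}=\polylog d$ (its $2qD_V$ factor plus its $O(1)$ $\pur$ factors), but by \Cref{prop:top-down-charging} it is charged at least one edge-copy, which as an $H$ (or $h_3$, $h_4$) copy has value at most $\frac{32qD_V}{\sqrt d}$ by \Cref{lem:mixed-edge-factor-scheme}; since $q<d^{\eps}$ the product is $o_d(1)$. The residual $\pur$ contributions --- those from surprise and high-mul steps, from the square-vertex count $\mathsf{MidApp}$, and from the dormant-excess count $D(v)$, all controlled by \Cref{lem:pur-circle}, \Cref{lem:pur-square}, and the $\calR$-specific square bound $\pur_t(v)\le 3\,\mathsf{MidApp}_t(v)+3(s_t(v)+h_t(v))+D(v)$ --- are each matched against a slack edge-copy of value $O(1/\sqrt d)$ (or $\widetilde O(1/\sqrt d)$) produced by the charging scheme, and hence are again $o_d(1)$. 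Summing the per-appearance bounds over all step-labelings via \Cref{prop:sum-of-labelings} then yields a block-value bound that one checks is at most $\frac1{10}$ for $q<d^{\eps}$, which is \Cref{lem:block-val-bound-RA}, and therefore $\|\calR\|_{\op}\le\frac12$ via \Cref{prop:norm-bound-from-B}, which is \Cref{lem:R-norm-bound}.

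The main obstacle is not this bookkeeping but the underlying proof of \Cref{prop:top-down-charging}: that the top-down charging never assigns one edge-copy to two vertices, always supplies the promised number of $F/R$ copies to every first/last appearance and at least one copy to every square vertex's local-first/global-middle appearance, never charges a surprise or high-mul step to a first/last appearance, and, most delicately, always leaves one spare ``block-reserve'' copy of value $O(1/\sqrt d)$ strong enough to pay for \emph{both} boundary circle vertices of $U\cup V$ at once. This relies on the ``unbreakable $U$--$V$ path'' statement together with a careful case analysis of the exceptional configurations: the $U=V$ blocks and the distinguished first square vertex $\mathsf{SQ}_1$, the final $h_2$ gadget (where the critical-edge-finding procedure is invoked), and the floating components produced by the $1_m\eta^{T}$, $\eta 1_m^{T}$, and $\eta\eta^{T}$ terms of the Woodbury expansion in \Cref{prop:M-inverse-eta}. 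Once \Cref{prop:top-down-charging} and the $\pur$ bounds for $\calR$ are in hand, the Corollary follows from the arithmetic above.
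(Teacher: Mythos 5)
Your proposal is correct and follows essentially the same route as the paper: the corollary is pure bookkeeping, pairing the $\sqrt{d}$ (resp.\ $\sqrt{m}$) vertex factor for a global first/last appearance with the one (resp.\ two) $F/R$ edge-copies guaranteed by \Cref{prop:top-down-charging}, multiplying by the cost $[6]$ for naming a presumed-forced return leg, and absorbing middle appearances and $\pur$ factors into $1+o_d(1)$ via the slack edge-copies, exactly as the paper's preceding itemized accounting does. Your added observation that the real content lives in \Cref{prop:top-down-charging} and the $\calR$-specific $\pur$ bounds, which the corollary takes as given, matches the paper's treatment.
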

\begin{proof}[Proof to \cref{lem:block-val-bound-RA}]
By our edge-copy charging scheme and \Cref{prop:top-down-charging}, we observe the following,
\begin{enumerate}
    \item Treat the $U-V$ path as a gadget, and we have $2$ choices depending on whether $U=V$ for the upcoming block;
    \item For each gadget-step, we sum over the edge-labelings;
    \item For each gadget along the dangling path, we pick up at most the following factor, \begin{itemize}
        \item $M_\al$: it gives $2$ circle vertices and $1$ square vertex, which is a factor of at most \[ 
      B_\al \coloneqq   (1+o_d(1)) \cdot 3^{4}\cdot  \frac{4}{d^2} \cdot \sqrt{md^2} \cdot 6^3 \,;
        \]
        \item $M_\beta$: it gives $1$ circle vertex and $1$ square vertex, which is a factor of at most \[ 
     B_\beta \coloneqq    (1+o_d(1)) \cdot 3^4 \cdot \frac{2}{d^2} \cdot \sqrt{md} \cdot 6^2  \,;
        \]
     	\item $M_D$: it gives $1$ circle vertex, which is a factor of at most $ 
     	(1+o_d(1)) \cdot 3^2    	$
     	if it is the first $M_D$ gadget of the block with the factor $3$ counting the step-label of the edge-copies in the gadget, or a factor of at most $B_D \coloneqq \tilde{O}(\frac{1}{\sqrt{d}})$ for any subsequent $M_D$ gadgets. 
    \end{itemize}
    \item For the $U- V$ path, the square vertex $SQ_1$ gives a factor of at most \[ 
    (1+o_d(1))  \cdot 6 \cdot \sqrt{\frac{m}{d^2}} 
    \] 
    and the circle vertex combined gives a factor of at most \[ 
    (1+o_d(1)) \cdot  (\sqrt{2})^2 \cdot  6
    \]
    where the factor of $6$ comes from at most one circle vertex being reached using $R$ edge.
\end{enumerate}
Therefore, combining the above bounds, we have \begin{align*}
	B(\calR) \leq (1+o_d(1)) \cdot 6 (\sqrt{\frac{m}{d^2}} \cdot 6 \cdot \sqrt{2}^2) \cdot 3^2 \cdot \prod_{\leq \polylog d \text{ gadgets} } (B_\al +B_\beta +B_D  )  < \frac{1}{2}
\end{align*}
provided \begin{align*}
	B_\al, B_\beta < \frac{1}{2} 
\end{align*}
and 
\[
 6 (\sqrt{\frac{m}{d^2}} \cdot 6 \cdot \sqrt{2}^2) \cdot 3^2  < \frac{1}{2}
 \]
 It can be verified that it suffices for us to take $m < \frac{1}{7000000}\cdot  d^2$ though we do not emphasize upon the particular constant as we believe a more careful argument by tracking our above bounds can render an improved constant without much work.
\end{proof}

\begin{lemma}[Restatement of ~\cref{lem:R-norm-bound}]
    With probability at least $1 - 2^{-d^\eps}$ for some constant $\eps>0$, \[ 
    \|\calR\|_{\op} <\frac{1}{2}\,.
    \]
\end{lemma}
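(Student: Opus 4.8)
The plan is to assemble the decomposition of \Cref{prop:R-decomposition} with the block-value bound of \Cref{lem:block-val-bound-RA}, convert the latter into a spectral-norm bound via the trace moment method of \Cref{sec:overview-local-analysis}, and then absorb the error term by a union bound over the auxiliary high-probability events.

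First, by \Cref{prop:R-decomposition} we write $\calR = \calR_1 + \calR_2 + E_{\calR}$, where $\|E_{\calR}\|_{\op} = o_d(1)$ — and, chasing the failure probabilities in \Cref{claim:sum-vi-vi}, \Cref{Claim:eta-bound} and \Cref{lem:truncated-A-inverse}, this holds except with probability $2^{-d^{\Omega(1)}}$. After substituting the truncated Neumann series $T_0$ of \Cref{def:truncated-A-inverse} for $A^{-1}$, each of $\calR_1$ and $\calR_2$ is a \emph{finite} linear combination of the dangling-shape graph matrices introduced in \Cref{sec:R-decomposition}: the number of constituent shapes is at most $4^{\tau_1+\tau_2+\tau_3+\tau_4} = d^{O(1)}$, each shape has at most $D_V = \polylog d$ vertices, and the scalar coefficients $\frac{r\pm s}{s^2-ru}$, the $\pm 1/d$ factors coming from $\frac1d I_m$ gadgets, and the Neumann signs are all folded into the ``block-reserve'' and edge-value accounting of \Cref{prop:top-down-charging,lem:mixed-edge-factor-scheme}.

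Next, put $\calR' := \calR_1 + \calR_2$ and fix $q = d^{\eps_0}$ for a small enough constant $\eps_0>0$ so that both $q \geq \Omega(\log^2 d)$ (as required by \Cref{prop:norm-bound-from-B}) and $q < d^{\eps}$ (as required by \Cref{lem:block-val-bound-RA}) hold. Expanding $\E\!\left[\tr\!\left((\calR'\calR'^{\top})^q\right)\right]$ as a weighted sum over closed length-$2q$ block-walks — where at each block-step the walker traverses a block described by one of the constituent dangling shapes — the quantity $B_q(\calR) = \sum_{\tau}\sum_{\calL}\vtxcost(\calL)\cdot\edgeval(\calL)$ is a valid block-value function for $\calR'$, exactly as in \Cref{prop:sum-of-labelings} but with an additional, harmless sum over the $d^{O(1)}$ shapes; hence $\E[\tr((\calR'\calR'^{\top})^q)] \leq d^{O(1)}\cdot B_q(\calR)^{2q}$. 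By \Cref{lem:block-val-bound-RA} we have $B_q(\calR)\leq \frac1{10}$, so \Cref{prop:norm-bound-from-B} yields $\|\calR'\|_{\op}\leq (1+o_d(1))\cdot\frac1{10}$ except with probability $2^{-q/\log d} = 2^{-d^{\Omega(1)}}$.

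Finally, each high-probability event used above — the norm bounds of \Cref{lem:T-norm}, the $\|\eta\|_2$ bound of \Cref{Claim:eta-bound}, the bounds on $r,s,u$ of \Cref{lem:M-invertible}, the truncation error bound of \Cref{lem:truncated-A-inverse}, the bound on $\|\sum_i v_iv_i^{\top}\|_{\op}$ of \Cref{claim:sum-vi-vi}, and the trace concentration just invoked — fails with probability at most $2^{-d^{\Omega(1)}}$; intersecting them via a union bound gives, with probability at least $1 - 2^{-d^{\eps}}$ for a suitable constant $\eps>0$, $\|\calR\|_{\op} \leq \|\calR'\|_{\op} + \|E_{\calR}\|_{\op} \leq (1+o_d(1))\cdot\frac1{10} + o_d(1) < \frac12$, which is the claim and recovers the inverse-exponential failure probability advertised in the introduction. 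The only real difficulty lies in \Cref{lem:block-val-bound-RA} itself: driving the block-value $B_q(\calR)$ down to an absolute constant needs the no-polylog-loss vertex/edge factor scheme with redistribution of \Cref{sec:vertex-factor-scheme,sec:edge-factor-scheme}, the ``top-down'' edge-copy charging of \Cref{prop:top-down-charging} — in particular locating the spare ``block-reserve'' edge-copy that pays for the two boundary circle vertices in $U\cup V$, the separate treatment of the $U=V$ case and of the $\mathsf{SQ}_1$ vertex — together with the refined $\pur$ bound for square vertices on the dangling path; once that constant-factor block-value bound is in hand, the passage to $\|\calR\|_{\op} < \frac12$ is the routine assembly sketched above.
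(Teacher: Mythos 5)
Your proposal is correct and follows essentially the same route as the paper: the paper's proof of this restatement is exactly the assembly you describe — take the decomposition $\calR=\calR_1+\calR_2+E_{\calR}$ from \Cref{prop:R-decomposition}, invoke the block-value bound $B_q(\calR)\le\frac{1}{10}$ of \Cref{lem:block-val-bound-RA}, set $q=d^{\eps}$ in \Cref{prop:norm-bound-from-B}, and add the small error term. If anything, you are slightly more explicit than the paper in chasing the inverse-exponential failure probabilities of the auxiliary events and in noting that the truncated series contributes only $d^{O(1)}$ shapes, but this does not change the argument.
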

\begin{proof}
    This follows by setting $q= d^{\eps}$ for some constant $\eps>0$ in \cref{prop:norm-bound-from-B}, and combines with our block-value function for small enough constant $c$.
\end{proof}

\section*{Acknowledgements} We would like to thank anonymous reviewers for their various suggestions in polishing our writing. J.H., P.K., and J.X. are grateful to Prayaag Venkat for bringing this problem to their attention in his theory lunch talk at CMU.

\bibliographystyle{alpha}
\bibliography{main}

\newpage
\appendix
\section{Deferred calculations}
\label{sec:missing-proofs}

\begin{claim}[Restatement of \Cref{Claim:eta-bound}]
 With probability at least $1-o_d(1)$, \[ 
 \|\eta \|_2^2 \leq (1+o_d(1)) \frac{2m}{d} \mper
 \]
 \end{claim}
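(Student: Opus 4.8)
The plan is to compute $\|\eta\|_2^2 = \sum_{i=1}^m (\|v_i\|_2^2 - 1)^2$ by first establishing that its expectation is exactly (or asymptotically) $\frac{2m}{d}$, and then showing concentration around this mean. Recall that for $v_i \sim \calN(0, \frac1d I_d)$, each coordinate $v_i[a] \sim \calN(0, 1/d)$, so $\|v_i\|_2^2 = \frac1d \chi^2_d$ in distribution, with $\E\|v_i\|_2^2 = 1$ and $\mathrm{Var}(\|v_i\|_2^2) = \frac{1}{d^2}\cdot 2d = \frac2d$. Hence $\E[\eta_i^2] = \frac2d$ and $\E\|\eta\|_2^2 = \frac{2m}{d}$ exactly. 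So the statement is really a concentration (upper tail) bound on $\|\eta\|_2^2$ around $\frac{2m}{d}$.

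\textbf{Key steps.} First I would note that the $\eta_i = \|v_i\|_2^2 - 1$ are i.i.d.\ across $i \in [m]$, each being a centered (normalized) chi-squared-type variable. Then $\|\eta\|_2^2 = \sum_i \eta_i^2$ is a sum of $m$ i.i.d.\ nonnegative random variables with mean $\frac2d$. The cleanest route is a one-sided concentration bound: since $\eta_i^2$ has sub-exponential-type tails (it is the square of a sub-exponential variable, so it has tails like $e^{-c\sqrt{t}}$, which is enough), one can apply Bernstein's inequality for sums of i.i.d.\ sub-exponential (or more precisely sub-Weibull) random variables. Concretely, $\eta_i^2 = (\frac1d\sum_{a=1}^d (g_a^2 - 1))^2$ where $g_a \sim \calN(0,1)$ i.i.d.; standard chi-squared tail bounds (Laurent--Massart) give $\Pr[|\eta_i| > t] \leq 2e^{-c d \min(t^2, t)}$, so $\eta_i^2$ concentrates at scale $1/d$ with exponential-type tails. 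Summing over $m$ terms, a union-type/Bernstein argument gives
\[
\Pr\!\left[ \|\eta\|_2^2 > (1+\delta)\frac{2m}{d} \right] \leq \exp\!\left( -\Omega\!\left( \min\!\Big( \delta^2 m, \; \delta \sqrt{md} \Big) \right) \right),
\]
which is $o_d(1)$ for any fixed $\delta$ as long as $m = \omega(1)$ (and in our regime $m \to \infty$). Choosing $\delta = \delta_d \to 0$ slowly (e.g.\ $\delta_d = \log d / \sqrt{\min(m,\sqrt{md})}$) still yields failure probability $o_d(1)$, which gives the $(1+o_d(1))$ factor in the statement.

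\textbf{Main obstacle.} The computation is essentially routine; the only mild subtlety is making sure the tail of $\eta_i^2$ is handled correctly — it is the \emph{square} of a sub-exponential variable, hence heavier-tailed than sub-exponential, so a naive application of Bernstein for sub-exponential variables to the $\eta_i^2$ directly needs a small justification (either invoke a sub-Weibull/sub-exponential-squared Bernstein variant, or truncate: show $\max_i \eta_i^2 \leq O(\log^2 d / d)$ with high probability via a union bound over $i \in [m]$, and on that event the $\eta_i^2$ are bounded, so the ordinary bounded-difference or Bernstein inequality applies). I would take the truncation route as it is the most transparent: condition on the event $\{\max_{i} |\eta_i| \le C\sqrt{\log d / d}\}$, which holds with probability $1 - m \cdot d^{-\omega(1)} = 1 - o_d(1)$, and then apply Bernstein to the bounded, i.i.d., mean-$\frac2d$ summands $\eta_i^2$. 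This is the only place requiring care; everything else is a direct moment computation. Since the paper defers this to the appendix as a "deferred calculation," a proof along exactly these lines is what I would write out in full there.
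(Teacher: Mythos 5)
Your proposal is correct, but it takes a genuinely different route from the paper. The paper proves \Cref{Claim:eta-bound} by the same moment-method machinery it uses everywhere else: it expands $\E\bigl[\|\eta\|_2^{2q}\bigr]$ as a sum over index sequences $(s_i,a_i,b_i)$ of products of $h_2$-type edges, bounds the contributions with the vertex/edge-factor bookkeeping (the dominant term being the ``diagonal'' case $a_i=b_i$, which yields $(1+o_d(1))(2md/d^2)^q$), and then takes $1/q$-th roots and applies Markov. You instead exploit the i.i.d.\ structure across $i\in[m]$ directly: you compute $\E[\eta_i^2]=\mathrm{Var}(\|v_i\|_2^2)=\tfrac{2}{d}$ exactly, so $\E\|\eta\|_2^2=\tfrac{2m}{d}$, and then prove one-sided concentration via Laurent--Massart chi-squared tails, truncation of $\max_i|\eta_i|$ at $O(\sqrt{\log d/d})$ (with the union bound over $m\le cd^2$ indices absorbed by taking the truncation constant large enough), and Bernstein for the bounded truncated summands. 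Both arguments are sound and both implicitly need $m\to\infty$ (which holds in the paper's regime); your route is more elementary and makes the constant $2m/d$ transparent as an exact expectation, while the paper's route keeps the proof uniform with its graph-matrix trace calculations and directly controls arbitrarily high moments. The only point requiring care in your write-up is the one you already flag: $\eta_i^2$ is the square of a sub-exponential variable, so either truncate (handling the conditioning by comparing to the truncated sum, whose mean is at most $2/d$) or invoke a sub-Weibull Bernstein variant; with that spelled out, your proof is complete.
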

 \begin{proof}
 We first unpack the inner product, \[ 
 \|\eta \|_2^2 = \sum_{i\in [m]} \eta_i^2 = \sum_{i\in [m]}\left( \sum_{j\in [d]} (v_i[j]^2-\frac{1}{d})\right)^2 = \sum_{i\in [m]} \sum_{j_1, j_2 \in [d]}\left(v_i[j_1]^2-\frac{1}{d} \right)\left(v_i[j_2]^2 -\frac{1}{d}   \right)\,.
 \]
  For any $q>0$, we then have \[ 
  \E[\|\eta\|_2^{2q}]  = \E\left[\sum_{s_1,s_2,\dots, s_q \in [m]} \sum_{\substack{a_1,a_2, \dots, a_q \in [d] \\ b_1, b_2,\dots,b_q \in [d] }} \prod_{i=1}^{q} (v_{s_i}[a_i]^2 - \frac{1}{d} ) \cdot (v_{s_i}[b_i]^2 - \frac{1}{d} )  \right] \leq\left( 1+o_d(1)\right)\left(\frac{2md}{d^2}\right)^q
  \]
  where the final bound follows in a similar way to our norm bounds as 
  \begin{enumerate}
      \item For each new $s_i$, we case on whether $a_i = b_i$, and if so, we pick a factor of $d$ for $a_i=b_i$ and a factor of $m$ for $s_i$;
      \item Otherwise, if we have $a_i\neq b_i$, since each edge is mean $0$, we can assign a factor of $\sqrt{d \cdot 2q}$ to each of the $a_i, b_i$ for their first two appearances, and a factor of $q$ for $a_i$,  $b_i$ making their third appearances and onwards; similarly, we assign a factor of $\sqrt{m \cdot q} $ for the first two appearances of $s_i$ or $q$ if $s_i$ is not making the first two  appearances;
      \item Each $H_2$ edge $(v_{s_i}[t_i]^2 - \frac{1}{d} )$ gets assigned its standard deviation that is $\frac{\sqrt{2}}{d}$ when it appears for the first two times, otherwise a factor of $O(\frac{q}{d})$;
      \item To see the final bound, notice we can case on whether each $H_2$ edge is making their first two appearances. \begin{itemize}
          \item In the case both edges making first two appearances, we pick up an edge-value of exactly $(\frac{\sqrt{2}}{d})^2= \frac{2}{d^2}$; otherwise, we pick up an edge-value at most $(\frac{q}{d})^2= \frac{q^2}{d^2}  $;
          \item Assuming both edges making first two appearances, and if $a_i=b_i$ matches, we pick up a factor of \[ 
          (1+o_d(1)) \frac{2}{d^2} \cdot md
          \]
          \item Assuming both edges making first two appearances while $a_i\neq b_i$, we pick up a factor of at most \[ 
          (1+o_d(1)) \frac{2}{d^2} \cdot \sqrt{md^2 q^3}
          \]
          \item If some edge is making third appearance or beyond, we pick up a factor at most  \[ 
          (1+o_d(1)) \frac{q^2}{d^2} \cdot O(q^2\sqrt{dq})
          \]
          as we have at most one new ''circle'' vertex that takes label in $[d]$ and gets assigned weight at most $\sqrt{dq}$.
          \item Summing over the above cases gives us a bound of \[ 
          (1+o_d(1)) \frac{2}{d^2} \cdot md = (1+o_d(1)) \frac{2m}{d}
          \]
      \end{itemize}
  \end{enumerate}

  Taking the $\frac{1}{q}$-th root and apply Markov's gives us the desired.
 \end{proof}

\begin{claim}[Bound of $s$]  \label{claim: bound-for-s}
	Recall that $s = 1+ \frac{1}{d}\eta^TA^{-1}1_m$, we have \[
	|s| \leq 1+o_d(1)
	 \]
\end{claim}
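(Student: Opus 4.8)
Recall $s = 1 + \frac{1}{d}\eta^T A^{-1} 1_m$, so it suffices to show $\left|\frac{1}{d}\eta^T A^{-1} 1_m\right| = o_d(1)$. The plan is to combine Cauchy--Schwarz with the norm bounds already available in the excerpt. By Cauchy--Schwarz, $\left|\eta^T A^{-1} 1_m\right| \leq \|A^{-1}\|_{\op} \cdot \|\eta\|_2 \cdot \|1_m\|_2$. From \Cref{lem:A-eigenvalues} we have $\|A^{-1}\|_{\op} \leq 2$; from \Cref{Claim:eta-bound} we have $\|\eta\|_2 \leq (1+o_d(1))\sqrt{2m/d}$; and trivially $\|1_m\|_2 = \sqrt{m}$. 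Putting these together gives
\[
    \frac{1}{d}\left|\eta^T A^{-1} 1_m\right| \leq \frac{1}{d} \cdot 2 \cdot (1+o_d(1))\sqrt{\frac{2m}{d}} \cdot \sqrt{m} = (1+o_d(1))\frac{2\sqrt{2}\, m}{d^{3/2}}.
\]
Since we are in the regime $m \leq c d^2$ for a small constant $c$, this is at most $(1+o_d(1)) \cdot 2\sqrt{2}\, c^{1/2} d^{1/2}$, which is unfortunately \emph{not} $o_d(1)$ — it grows with $d$. So the naive Cauchy--Schwarz bound is too lossy, and the main obstacle is precisely that $\|\eta\|_2 \asymp \sqrt{m/d} \asymp \sqrt{d}$ and $\|1_m\|_2 = \sqrt{m} \asymp d$ are both large; their product overwhelms the $1/d$ normalization.

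The fix is to exploit cancellation rather than triangle-inequality the absolute values. The key observation is that $\eta$ is, entrywise, a centered quantity ($\eta_i = \|v_i\|_2^2 - 1$ with $\E\eta_i = 0$), and $A^{-1} 1_m$ is a deterministic-looking smooth vector. The cleanest route is to write $\eta^T A^{-1} 1_m$ and note that $A^{-1} = (I_m - T)^{-1}$ with $T = -(M_\alpha + M_\beta + M_D + \frac1d I_m)$, and use the Neumann expansion $A^{-1} 1_m = \sum_{k\geq 0} T^k 1_m$. The leading term is $\eta^T 1_m = \sum_i \eta_i = \sum_i (\|v_i\|_2^2 - 1)$, which concentrates at $0$ with fluctuations $O(\sqrt{m/d}) = O(\sqrt{d}) \ll d$, so $\frac1d \eta^T 1_m = O(1/\sqrt{d}) = o_d(1)$. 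For the higher-order terms $\frac1d \eta^T T^k 1_m$, one bounds $|\eta^T T^k 1_m| \leq \|\eta\|_2 \|T\|_{\op}^k \|1_m\|_2 \leq (1+o_d(1))\sqrt{2m/d}\cdot (0.3)^k \cdot \sqrt{m}$; summing the geometric series in $k\geq 1$ gives $\frac1d \cdot O(\sqrt{m/d}\cdot \sqrt m) \cdot \frac{\|T\|_{\op}}{1-\|T\|_{\op}}$, which has the same bad $\sqrt{d}$ scaling. So this still does not close — the obstacle persists for $k\geq 1$.

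Therefore the real plan must use a sharper estimate on $T^k 1_m$: namely that applying $T$ to the flat vector $1_m$ produces something small because the row sums of $M_\alpha, M_\beta, M_D$ are themselves small (they are sums of centered terms). Concretely, I would prove $\|M_\beta 1_m\|_2, \|M_\alpha 1_m\|_2, \|M_D 1_m\|_2 \leq \polylog(d)\cdot \sqrt{m}\cdot (\text{something} \ll 1)$ via a direct second-moment computation (each coordinate $(M_\beta 1_m)_i = \sum_{j\neq i}\sum_a (v_i[a]^2 - \frac1d)(v_j[a]^2-\frac1d)$ has small variance after summing over $j$, exploiting independence across $j$), and similarly $\frac1d\|J_m\text{-type contribution}\|$. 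Then $\|T^k 1_m\|_2 \leq \|T\|_{\op}^{k-1}\|T 1_m\|_2$, and the inner product $|\eta^T T^k 1_m| \leq \|\eta\|_2 \|T\|_{\op}^{k-1}\|T 1_m\|_2$ with $\|T1_m\|_2 = o(\sqrt m / \sqrt d)$ would give $\frac1d \eta^T T^k 1_m = o(1)$ uniformly. Summing over $k \geq 0$ (using the $k=0$ bound $\frac1d|\eta^T 1_m| = o(1)$ separately) yields $|s - 1| = o_d(1)$. The main obstacle is establishing the row-sum bound $\|T 1_m\|_2 = o(\sqrt{m/d})$ — i.e.\ showing that $1_m$ is close to an eigenvector-like direction for which $T$ contracts strongly — which requires a genuine (though routine) variance calculation with a trace-moment or Bernstein-type argument rather than black-box operator norms, since the operator norm of $T$ alone is only a constant and cannot see this extra cancellation.
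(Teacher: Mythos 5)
Your opening diagnosis is right: the naive bound $\frac1d\|A^{-1}\|_{\op}\|\eta\|_2\|1_m\|_2$ diverges, the zeroth-order term $\frac1d\eta^T 1_m=O(1/\sqrt d)$ is fine, and expanding $A^{-1}$ in a Neumann series is indeed how the paper proceeds (in truncated form). The gap is in the step you lean on for $k\ge 1$: the lemma $\|T1_m\|_2=o(\sqrt{m/d})$ is false in the regime $m=\Theta(d^2)$. For $M_\alpha$, conditioned on $v_i$, $(M_\alpha 1_m)_i=\sum_{a\ne b}v_i[a]v_i[b]\sum_{j\ne i}v_j[a]v_j[b]$ is a sum of $m-1$ independent centered terms, each of conditional variance $\tfrac{2}{d^2}\sum_{a\neq b}v_i[a]^2v_i[b]^2\approx \tfrac{2}{d^2}$; hence $(M_\alpha 1_m)_i=\Theta(\sqrt m/d)$ and $\|M_\alpha 1_m\|_2=\Theta(m/d)$, exceeding your target $o(\sqrt{m/d})$ by a factor $\Theta(\sqrt{m/d})=\Theta(\sqrt d)$ --- the row sums of $M_\alpha$ have no cancellation beyond the independence over $j$, which is already priced in. Plugging the true value into your own display gives $\frac1d\|\eta\|_2\|T1_m\|_2=\Theta(m^{3/2}/d^{5/2})=\Theta(\sqrt d)$, i.e.\ the same divergence you set out to avoid, already at $k=1$. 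The diagonal part defeats even the per-term goal: $M_D[i,i]=2\eta_i(1-\tfrac1d)+(\eta_i^2-\tfrac2d)$, so $M_D1_m$ is essentially parallel to $\eta$, $\E[\eta_iM_D[i,i]]=\tfrac4d(1+o(1))$, and $\frac1d\eta^TM_D1_m$ concentrates at $\tfrac{4m}{d^2}(1+o(1))=\Theta(m/d^2)$, a constant rather than $o_d(1)$. So the statement ``$\frac1d\eta^TT^k1_m=o(1)$ for each $k\ge1$'' that your plan would establish is not true at $k=1$; the claim $|s|\le 1+o_d(1)$ survives because this diagonal contribution has a definite sign (it pushes $s$ below $1$) and is only $O(m/d^2)$, and your scheme sees neither fact.

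The structural problem is that peeling off $\|\eta\|_2\approx\sqrt{2m/d}$ by Cauchy--Schwarz discards exactly the cancellation that matters: for instance $\eta^TM_\alpha 1_m$ has typical size $\Theta(m/d^{3/2})$, a factor $\Theta(\sqrt m)$ below $\|\eta\|_2\|M_\alpha 1_m\|_2=\Theta(m^{3/2}/d^{3/2})$, because $\eta$ is nearly orthogonal to $M_\alpha 1_m$; this near-orthogonality lives in the inner product and has to be certified by a moment/trace computation on the scalar $\eta^TQ_1\cdots Q_k 1_m$ itself, not by norms of its factors. That is what the paper does: it writes $A^{-1}=T_0+E$ with $T_0$ the truncated Neumann series of \Cref{def:truncated-A-inverse}, disposes of $\frac1d\eta^TE1_m$ by the crude norm bound (harmless since $\|E\|_{\op}\le O(\log^2 d/d^2)$ by \Cref{lem:truncated-A-inverse}), and bounds $\frac1d\eta^TT_01_m$ with the same graph-matrix block-value machinery used for $M$ and $\calR$, treating each $\eta^TQ_1\cdots Q_k1_m$ as a floating component and charging edge-copies to vertex appearances, which is where the joint cancellation (and the bookkeeping of the $M_D$-type intersection terms) is captured. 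If you want an elementary substitute, you would need to estimate mean and variance of each of the boundedly many truncated scalars $\eta^TQ_1\cdots Q_k1_m$ directly, treating the $M_D$-chains (whose means are $\Theta(m/d)$) separately; a bound on $\|T1_m\|_2$ alone cannot close the argument.
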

\begin{proof}
	It suffices for us to bound $ |\frac{1}{d}\eta^TA^{-1}1_m| \leq o_d(1)$. We adopt the truncation strategy in our main analysis of $\calR$, and split it as \[  
	\frac{1}{d}\eta^TA^{-1}1_m  = \frac{1}{d}\eta (T_0 + E) 1_m = \frac{1}{d}\eta T_0 1_m + \frac{1}{d}\eta E 1_m
	\]
	where we focus on the analysis of the first term.
\end{proof}

\begin{lemma}
\[ B\Paren{\frac{1}{d} \eta T_0 1_m} \leq \tilde{O}\Paren{\frac{1}{\sqrt{d}}} \]
\end{lemma}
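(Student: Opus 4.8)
The plan is to treat $\tfrac1d\eta^\top T_0 1_m$ as a scalar-valued graph matrix and bound its block-value function, exactly along the lines of the analysis of $\calR$ in \Cref{sec:analysis-of-R}. Expanding $T_0$ according to \Cref{def:truncated-A-inverse}, the quantity $\tfrac1d\eta^\top T_0 1_m$ is a sum of $\polylog(d)$ many terms of the form $\tfrac1d\,\eta^\top (Q_1 Q_2 \cdots Q_k)\,1_m$ with $Q_1,\dots,Q_k \in \{M_\al, M_\beta, M_D, \tfrac1d I_m\}$ and $k \le \polylog(d)$. Each such term is a graph matrix with empty left and right boundary; since $\eta_i = \sum_{a} h_2(v_i[a])$, its shape is a dangling path of injective gadgets (the $Q_\ell$'s) whose two loose ends are: (i) a free square vertex, coming from the $1_m$ factor, which takes a label in $[m]$ and is reached by no edge; and (ii) a circle vertex attached to the last square vertex of the path by a single $h_2$ edge --- this is exactly the ``final $h_2$ gadget'' of \Cref{sec:R-decomposition}. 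By \Cref{prop:norm-bound-from-B} (with $q = d^{\eps}$) it suffices to show $B_q(\tau) \le \tilde O(1/\sqrt d)$ for each such shape $\tau$, since then $B\Paren{\tfrac1d\eta^\top T_0 1_m} \le \polylog(d)\cdot\tilde O(1/\sqrt d) = \tilde O(1/\sqrt d)$ by summing over the $\polylog(d)$ shapes, and Markov's inequality concludes.

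To bound $B_q(\tau)$, I would reuse the ``top-down'' edge-copy charging scheme (\Cref{prop:top-down-charging}) and the $\pur$ bounds for circle and square vertices (\Cref{lem:pur-circle,lem:pur-square}) verbatim. As in the proof of \Cref{lem:block-val-bound-RA}, every $M_\al$ and $M_\beta$ gadget along the dangling path contributes a factor bounded by $B_\al < 1$ and $B_\beta < 1$ respectively (for $m \le c d^2$ with $c$ a small enough constant), every additional $M_D$ gadget contributes $\tilde O(1/\sqrt d)$, and each $\tfrac1d I_m$ gadget contributes $\tfrac1d$; since the truncated series has only $O(\log d)$ gadgets, their total contribution is $O(1)$. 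The analysis is in fact strictly simpler than for $\calR$: there is no $U$--$V$ path that must be blocked, so neither the ``unbreakable path''/$v^*$ argument nor the block-reserve mechanism is needed here. What remains are the two loose ends: the final $h_2$ gadget contributes a circle-vertex factor $\sqrt d$ times the $h_2$ edge factor (at most $\tilde O(1/d)$ after accounting for possible $\pur$ and mixed-edge effects), i.e.\ $\tilde O(1/\sqrt d)$, exactly as in \Cref{sec:analysis-of-R}; and the free square vertex contributes its weight $\sqrt m$ for its first/last global appearance (its intervening appearances cost only $2qD_V = \polylog(d)$ each, absorbed into $\tilde O$). Finally the explicit prefactor contributes $\tfrac1d$. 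Multiplying, $B_q(\tau) \le O(1)\cdot \tilde O(1/\sqrt d)\cdot \sqrt m\cdot \tfrac1d = \tilde O\Paren{\sqrt m/d^{3/2}} \le \tilde O(1/\sqrt d)$, using $m \le d^2$; any copies of $\tfrac1d I_m$ in the term only improve this.

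The main obstacle is checking that the free square vertex arising from $1_m$ --- which is reached by no labeled edge and is therefore ``floating'' on each of its repeated appearances across blocks --- does not break the edge-factor bookkeeping. I expect this to be handled exactly as the analogous free-square endpoint at the bottom of the dangling path of $\calR_2$ is handled in \Cref{sec:analysis-of-R}: its first and last global appearances are paid for jointly with the last gadget on the path (so that no edge-factor deficit arises), and its intervening appearances only incur the $2qD_V$ locating cost of a middle appearance not reached via an $R$ step. Every other ingredient --- the $\pur$ bounds, the mixed $h_1/h_2$ edge-factor scheme of \Cref{lem:mixed-edge-factor-scheme}, and the concluding Markov step of \Cref{prop:norm-bound-from-B} --- carries over without change.
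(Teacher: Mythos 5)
Your high-level framing --- expand $T_0$, view each term $\tfrac1d\,\eta^\top Q_1\cdots Q_k 1_m$ as a floating (boundary-less) graph matrix, and bound its block value with the top-down charging machinery --- is the same as the paper's, but there is a genuine gap at exactly the point you flag as ``the main obstacle,'' and the analogy you invoke does not close it. You orient the traversal from the $1_m$ end, so the free square vertex is the start of the traversal and receives no edge-copies under the charging primitive; you then pay its appearance factor out of the $\tfrac1d$ prefactor together with the $\tilde O(1/\sqrt d)$ reserve from the $h_2$ gadget at the $\eta$ end. This balances when the square makes its first or last global appearance in the block (cost $\sqrt m \le d$), but in a block where it makes \emph{both} its first and last global appearance its cost is $\sqrt m\cdot\sqrt m = m$, and $m\cdot\tfrac1d\cdot\tilde O(1/\sqrt d) = \tilde O(\sqrt d)$ for $m = \Theta(d^2)$, far above the target. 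Such blocks do survive the expectation (precisely when the square's incident gadget edges recur inside the block), so one must argue that extra closed edge-copies incident to the free square can then be rerouted to it; this is the nontrivial case analysis you have not supplied. The appeal to the free-square endpoint of $\calR_2$ does not transfer: there the free square sits at the \emph{bottom} of the top-down traversal, so it is entered by the last gadget's edges (and protected by reverse-charging), and those terms additionally carry the normalization $\bigl|\frac{u+s}{s^2-ru}\bigr| = O(1/\sqrt d)$ which the paper spends as block-reserve; neither feature is available for your top vertex. Relatedly, \Cref{prop:top-down-charging} cannot be reused ``verbatim'': its proof pays the top square via the $U$-$V$ path edges and uses the unbreakable-path argument, both absent here.

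The paper resolves this by choosing the opposite orientation: it traverses from the circle vertex $\mathsf{circ}_1$ coming from $\eta$ and assigns the $\tfrac1d$ prefactor to it, so the orphaned endpoint has weight only $d$ and is covered whenever it makes first or last (but not both) appearance; the remaining case --- $\mathsf{circ}_1$ making both first and last appearance in one block --- is handled by an explicit argument producing an extra edge-copy of value $\tilde O(1/\sqrt d)$ (the final departure from $\mathsf{circ}_1$ is either reverse-charging, forcing a surprise visit within the block, or the departing edge appeared earlier as an $H$ step or its $F$ copy was a surprise arrival). To repair your proof, either adopt this orientation and supply that case analysis, or keep your orientation and prove the analogous ``gap-finding'' statement for the weight-$m$ endpoint, which is strictly harder since the missing factor is $\sqrt m$ rather than $\sqrt d$.
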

\begin{proof}
    We mimic our analysis for the well-conditionedness of $A^{-1}$, in particular, each term $\eta T_0 1_m$ is a floating component and moreover, we apply our edge-assignment scheme from analysis of $M$ by traversing from the circle-vertex on one end (for concreteness, take it to be the one from $\eta^T$). With some abuse of notation, let this vertex be $\mathsf{circ}_1$ again. Consider the following assignment-scheme, \begin{enumerate}
        \item Assign each $F, H$ and non-reverse-charging $R$ to the vertex it leads to;
        \item Assign each reverse-charging $R$ step to the destination vertex of the underlying $F$ copy;
        \item Assign the normalizing constant of $\frac{1}{d}$ to the starting circle vertex $\mathsf{circ}_1$.
    \end{enumerate}
    The desired block-value bound then follows immediately by the following proposition.
    \begin{proposition}
    Each vertex making first/last appearance outside $\mathsf{circ}_1$ is assigned the required (non-surprise) $F/R$ edge-copies. Moreover, there is either an extra edge-copy of factor $\tilde{O}(\frac{1}{\sqrt{d}})$, or $\mathsf{circ}_1$ is making first/last but not both appearances while it is assigned a factor of $\frac{1}{d}$.
    \end{proposition}
    \begin{proof}
        The first part follows immediately from our analysis of $M$, that we assign each vertex's first factor to the edge that discovers it, and if the vertex is making the last appearance in the same block, the factor is assigned by the $R$ step of the edge that discovers it. To see that the assignment of last appearance is using $R$ edge alone when the first, last appearances are not in the same block, note that each vertex is assigned the first edge ''locally'' that discovers the first vertex. In the case that this edge is non-$R$, the edge must be closed later on and we can swap the assignment of the underlying $R$ copy of that edge. The swapping factor is valid as the $R$ copy of the edge is intended for protecting the vertex's both first and last appearance in the current block, which is again not needed in this case.
        
        To see the second part of the claim, note that the traversal path of the floating component starts from a circle vertex while ends at a square vertex. On the one hand, note that it is immediate if the first circle vertex $\mathsf{circ}_1$ is making first/last but not both appearances in the current block as it is assigned the normalizing constant of $\frac{1}{d}$, giving a gap of $\frac{1}{\sqrt{d}}$. On the other hand, if the $\mathsf{circ}_1$ is making both first and last appearances,  there are two cases, \begin{enumerate}
            \item The final departure from  $\mathsf{circ}_1$  is a reverse-charging step, in this case, there is a surprise visit within this block, and it gives an extra copy of edge-value $\tilde{O}(\frac{1}{\sqrt{d}})$;
            \item The final departure from $\mathsf{circ}_1$ is not reverse-charging, this edge either makes a middle appearance before as an $H$-step, or its underlying $F$ copy is a surprise visit arriving at $\mathsf{circ}_1$. In either case, this gives an extra edge-copy of factor $\tilde{O}(\frac{1}{\sqrt{d}})$.
        \end{enumerate}
    \end{proof}
\end{proof}

\section{Analysis of \texorpdfstring{$M_D$}{M\_D}}
\label{sec:M-D}

Recall from \Cref{prop:M-decomposition} that $M_D[i,i] = \|v_i\|_2^4 - \frac{2}{d} \|v_i\|_2^2 - 1$.
For ease of technical analysis, it is helpful to note here that we can further decompose $M_D$ to be the following matrices:
\begin{proposition}[Decomposition of $M_D$] \label{prop:D-decomposition}
    We write 
    \begin{align*}
        M_D = M_{D,1} + M_{D,2} + \Paren{2 + \frac{2}{d}} M_{D,3} \mcom
    \end{align*}
    where for any $i \in [m]$,
    \begin{enumerate}
        \item $M_{D,1} [i,i ] =\sum_{a\neq b\in [d]} \left(v_i[a]^2 -\frac{1}{d}\right) \left(v_i[b]^2 - \frac{1}{d}\right)$;
        \item $M_{D,2} [i,i ] =\sum_{a\in [d]} (v_i[a]^4 - \frac{6}{d}v_i[a]^2 + \frac{3}{d^2})$;
        \item $M_{D,3} [i,i ] =\sum_{a\in [d]} (v_i[a]^2 - \frac{1}{d})$.
    \end{enumerate}
\end{proposition}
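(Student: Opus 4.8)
The statement to prove is \Cref{prop:D-decomposition}, which claims that the diagonal matrix $M_D$ with entries $M_D[i,i] = \|v_i\|_2^4 - \frac{2}{d}\|v_i\|_2^2 - 1$ decomposes as $M_{D,1} + M_{D,2} + (2+\frac{2}{d}) M_{D,3}$ with the three stated diagonal matrices. The plan is to verify this as a pure algebraic identity entry-by-entry, since all three matrices are diagonal and the claim is about a single scalar identity holding for each $i \in [m]$ with $v_i \in \R^d$ fixed.

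First I would fix $i$ and write $x_a = v_i[a]$ for brevity, so that $\|v_i\|_2^2 = \sum_a x_a^2$ and $\|v_i\|_2^4 = \left(\sum_a x_a^2\right)^2 = \sum_{a} x_a^4 + \sum_{a\neq b} x_a^2 x_b^2$. The natural route is to start from the right-hand side and collect terms. Expanding $M_{D,1}[i,i] = \sum_{a\neq b}(x_a^2 - \frac1d)(x_b^2 - \frac1d) = \sum_{a\neq b} x_a^2 x_b^2 - \frac2d(d-1)\sum_a x_a^2 + \frac{d(d-1)}{d^2}$, using that there are $d(d-1)$ ordered pairs $a \neq b$ and that $\sum_{a\neq b} x_a^2 = (d-1)\sum_a x_a^2$. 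Next, $M_{D,2}[i,i] = \sum_a x_a^4 - \frac6d \sum_a x_a^2 + \frac{3}{d}$ (the constant is $d \cdot \frac{3}{d^2} = \frac3d$). Finally $(2+\frac2d) M_{D,3}[i,i] = (2+\frac2d)\left(\sum_a x_a^2 - 1\right)$.

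Then I would sum the three contributions and check that (a) the $\sum_a x_a^4 + \sum_{a\neq b} x_a^2 x_b^2$ terms combine to exactly $\|v_i\|_2^4$; (b) the coefficient of $\sum_a x_a^2$ collects to $-\frac2d$; and (c) the remaining constants collect to $-1$. For (b): the coefficient is $-\frac2d(d-1) - \frac6d + (2 + \frac2d) = -2 + \frac2d - \frac6d + 2 + \frac2d = -\frac2d$, as desired. For (c): the constants are $\frac{d-1}{d} + \frac3d - (2 + \frac2d) = 1 - \frac1d + \frac3d - 2 - \frac2d = -1$, as desired. This confirms the identity. I would present these three bullet-verifications as a short displayed computation inside an \texttt{align*} environment in the write-up of the proof.

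I do not expect any genuine obstacle here — this is a routine bookkeeping verification with no probabilistic or spectral content. The only mild care needed is tracking the factor $d-1$ versus $d$ in the number of ordered off-diagonal pairs and keeping the $\frac1d$ and $\frac1{d^2}$ constants straight; a sign error in any of these would be the most likely slip, so I would double-check each of the three collected coefficients independently as above. The proof is essentially two or three lines of algebra once the notation $x_a = v_i[a]$ is set up.
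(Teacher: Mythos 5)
Your verification is correct, and it is essentially the same argument as the paper's: a purely algebraic entry-wise check of the identity $\|v_i\|_2^4 - \tfrac{2}{d}\|v_i\|_2^2 - 1 = M_{D,1}[i,i] + M_{D,2}[i,i] + (2+\tfrac{2}{d})M_{D,3}[i,i]$, differing only in that you expand the right-hand side and collect coefficients while the paper decomposes $\|v_i\|_2^4$ term by term. All your coefficient computations check out.
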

\begin{proof}
    $M_D[i,i] = \|v_i\|_2^4 - \frac{2}{d} \|v_i\|_2^2 - 1$.
    We first unpack $\|v_i\|_2^4$: $\|v_i\|_2^4 = (\sum_{a\in[d]} v_i[a]^2)^2 = \sum_a v_i[a]^4 + \sum_{a\neq b} v_i[a]^2 v_i[b]^2$.
    On the one hand, for any $a\neq b \in [d]$,
    \begin{align*}
        v_i[a]^2v_i[b]^2 &= \left((v_i[a]^2-\frac{1}{d})+\frac{1}{d}\right)
        \cdot \left((v_i[b]^2-\frac{1}{d})+\frac{1}{d}\right) \mcom
    \end{align*}
    thus,
    \begin{align*}
    \sum_{a\neq b \in [d]} v_i[a]^2v_i[b]^2 = \underbrace{\sum_{a\neq b\in [d]}\left((v_i[a]^2-\frac{1}{d})(v_i[b]^2-\frac{1}{d})  \right)}_{M_{D,1}}  + \underbrace{2\cdot \frac{d-1}{d}\sum_{a}(v_i[a]^2-\frac{1}{d}) }_{(2-\frac{2}{d}) M_{D,3}}+ \frac{d(d-1)}{d^2} \mper
    \end{align*}    
    On the other hand,
    \begin{align*}
        \sum_{a\in [d]}v_i[a]^4 &= \sum_{a\in [d]}\left( (v_i[a]^4 - \frac{6v_i[a]^2}{d} + \frac{3}{d^2}) + \frac{6}{d}(v_i[a]^2 - \frac{1}{d}) + \frac{3}{d^2} \right)\\
        &= \underbrace{\sum_{a\in [d]} (v_i[a]^4 - \frac{6v_i[a]^2}{d} + \frac{3}{d^2})}_{M_{D,2} } + \underbrace{\frac{6}{d} \sum_{a\in [d]} (v_i[a]^2 - \frac{1}{d})}_{\frac{6}{d} M_{D,3} } + \frac{3}{d} \mper
    \end{align*}
    Thus, $\|v_i\|_2^4 = M_{D,1} + M_{D,2} + (2+\frac{4}{d})M_{D,3} + 1 + \frac{2}{d}$.
    
    Next, observe that $\|v_i\|_2^2 = M_{D,3}[i,i] + 1$.
    Thus, we have
    \begin{align*}
         M_D[i,i] = \|v_i\|_2^4 - \frac{2}{d}\|v_i\|_2^2 - 1
         = M_{D,1} + M_{D,2} + (2+\frac{2}{d})M_{D,3} \mcom
    \end{align*}
    completing the proof.    
\end{proof}

\begin{lemma}[Norm bound for $M_{D}$]  \label{lem:block-bound-M-D}
    For matrices $M_{D,1}, M_{D,2}, M_{D,3}$ defined in \Cref{prop:D-decomposition},
    we have the following bounds
    \begin{enumerate}
        \item $B(M_{D,1}) \leq \widetilde{O}(\frac{1}{d^2} \cdot \sqrt{d^2}) = \widetilde{O}(\frac{1}{d})$;
        \item $B(M_{D,2}) \leq \widetilde{O}(\frac{\sqrt{d}}{\sqrt{d}^8} )$;
	\item $B(M_{D,3}) \leq \widetilde{O}(\frac{\sqrt{d}}{d}) = \widetilde{O}(\frac{1}{\sqrt{d}})$.
     \end{enumerate}
     As a consequence, $\|M_D\|_{\op} \leq \widetilde{O}(\frac{1}{\sqrt{d}}) =  o_d(1)$ with probability $1-  2^{-q/\log d}$.
\end{lemma}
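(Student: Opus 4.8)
The statement to prove is \Cref{lem:block-bound-M-D}: bounds on the block-value functions $B(M_{D,1})$, $B(M_{D,2})$, $B(M_{D,3})$, which together give $\|M_D\|_{\op} \leq \widetilde O(1/\sqrt d)$. The plan is to invoke the local-analysis machinery developed in \Cref{sec:overview-norm-bounds} --- namely \Cref{prop:norm-bound-from-B} and \Cref{prop:sum-of-labelings} --- so that it suffices to exhibit a valid vertex/edge factor assignment scheme for each of the three shapes and sum the resulting block value over all step-labelings. Each of these matrices is \emph{diagonal}, so the associated shape has $U_\tau = V_\tau$ consisting of a single square vertex, with a small gadget ($M_{D,1}$: a square vertex joined to two circle vertices by single $h_1$ edges forming a ``double-cherry''; $M_{D,2}$: a square vertex joined to one circle vertex by an $h_4$ edge; $M_{D,3}$: a square vertex joined to one circle vertex by an $h_2$ edge) hanging off. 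Since $U_\tau = V_\tau$, the boundary square vertex always appears in both adjacent blocks and hence never needs a vertex factor, which is what makes these contributions small.

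First I would handle $M_{D,3}$, the simplest case. Its shape is one square vertex with a dangling $h_2$ edge to a circle vertex. Apply the vertex-factor scheme with redistribution from \Cref{sec:vertex-factor-scheme}: the square boundary vertex costs nothing (middle appearance, on the boundary); the circle vertex costs $\sqrt{d}$ on first/last global appearance. Apply the edge-factor scheme from \Cref{sec:edge-factor-scheme} for $h_2$ edges: $F/R$ copies get $\sqrt 2/d$, $H$ copies get $8q^2/d$, and the $\pur$ bound for circle vertices (\Cref{lem:pur-circle}) plus the square-vertex $\pur$ bound (\Cref{lem:pur-square}, \Cref{cor:pur-square}) contributes only $\polylog(d)$ factors. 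Summing over the step-labelings $\{F,R,S,H\}$ of the single edge, the dominant term is $F$ (or $R$): vertex factor $\sqrt d$ times edge factor $\sqrt2/d$, giving $B(M_{D,3}) \leq \widetilde O(\sqrt d / d) = \widetilde O(1/\sqrt d)$, with all other labelings contributing $\widetilde O(1/d)$ or smaller. Then $M_{D,1}$ is analogous but with two circle vertices and four $h_1$ edge-copies: in the all-$F$/all-$R$ labeling, one circle vertex is a first appearance ($\sqrt d$) and the other must be a middle appearance (its edge to the square vertex forces it --- exactly the logic used in the $M_\beta$ proof, \Cref{lem:M-beta-norm-bound}), giving vertex factor $\sqrt d$ and edge factor $(1/\sqrt d)^2 = 1/d$ per cherry, hence $B(M_{D,1}) \leq \widetilde O(\sqrt{d^2}/d^2) = \widetilde O(1/d)$; I should double-check whether both circle vertices can simultaneously be making first appearances, but the parity/closure argument rules this out just as for $M_\beta$. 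For $M_{D,2}$, the dangling edge is an $h_4$ edge; by \Cref{lem:mixed-edge-factor-scheme} every edge-copy of an $h_3,h_4$ edge gets the ``expensive'' factor $32qD_V/\sqrt d$, so the four edge-copies give $\widetilde O(1/d^2)$ and even with the $\sqrt d$ circle-vertex factor we get $B(M_{D,2}) \leq \widetilde O(\sqrt d / d^2) = \widetilde O(1/d^{3/2})$, matching the claimed $\widetilde O(\sqrt d / \sqrt d^8) = \widetilde O(d^{-3/2})$ up to how one writes the exponent.

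Finally I would assemble: by \Cref{prop:D-decomposition}, $M_D = M_{D,1} + M_{D,2} + (2 + 2/d) M_{D,3}$, so by the triangle inequality for operator norm and \Cref{prop:norm-bound-from-B} applied to each shape (taking $q \geq \Omega(\log^2 d)$), with probability $1 - 2^{-q/\log d}$ we get $\|M_D\|_{\op} \leq (1+o_d(1))(B(M_{D,1}) + B(M_{D,2}) + 3 B(M_{D,3})) \leq \widetilde O(1/\sqrt d) = o_d(1)$, a union bound over the three events costing only a constant factor in the failure probability. The only genuinely delicate point --- the ``main obstacle'' --- is justifying the vertex-appearance bookkeeping for $M_{D,1}$: specifically, confirming that in the dominant $F\to F$ (and $R\to R$) labelings exactly one of the two circle vertices is forced into a middle appearance, so that the vertex factor is $\sqrt d$ rather than $d$. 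This is the same phenomenon that makes the $M_\beta$ bound work, and it follows because each $h_1$ edge must eventually be closed by an $R$ step, so a circle vertex incident to an $F$-labeled edge cannot simultaneously be making its last global appearance; combined with the fact that the square boundary vertex is never a first appearance, one of the two cherry-leaves must be a repeat. Everything else is routine application of the edge-factor scheme (\Cref{lem:mixed-edge-factor-scheme}), the $\pur$ bounds (\Cref{lem:pur-circle}, \Cref{lem:pur-square}), and summation over the $O(1)$-many step-labelings of a constant-size shape, with all $q$- and $D_V$-dependent factors absorbed into $\widetilde O(\cdot)$ since $q, D_V = \polylog(d)$.
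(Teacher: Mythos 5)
Your overall route is the same as the paper's: the paper's own proof is exactly this crude application of the local block-value machinery (the boundary square vertex sits in $U=V$ and costs nothing, each circle vertex pays at most $\sqrt d$ times $\polylog(d)$, edges pay via the $h_2$/$h_4$ edge-factor scheme and the $\pur$ bounds), and your assembly via \Cref{prop:sum-of-labelings}, \Cref{prop:norm-bound-from-B} and a triangle inequality over \Cref{prop:D-decomposition} gives the stated consequence $\|M_D\|_{\op}\leq \widetilde O(1/\sqrt d)$. Two points in your write-up are off, however. First, the step you single out as the delicate one for $M_{D,1}$ is false: nothing forces one of the two circle vertices to make a middle appearance in the all-$F$ (or all-$R$) labeling. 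Since the square vertex lies in $U=V$ it persists into later blocks, so both dangling $h_2$ edges opened in the current block can be closed in a later block with both circle leaves brand new; the $M_\beta$ forcing ($F\to R$ impossible along the $U$--$V$ path) has no analogue for a cherry hanging off a boundary vertex. This does not hurt you: the all-fresh labeling gives vertex factor $\sqrt d\cdot \sqrt d$ times edge factor $2/d^2$, i.e.\ $2/d$, which is exactly the $\widetilde O(\sqrt{d^2}/d^2)$ you wrote down (your displayed arithmetic already corresponds to two fresh circle vertices, contradicting the sentence before it), and it is the genuinely dominant term since $M_{D,1}[i,i]\approx \eta_i^2$ is typically of order $1/d$.

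Second, for $M_{D,2}$ you prove $B(M_{D,2})\leq \widetilde O(\sqrt d/d^2)=\widetilde O(d^{-3/2})$ and assert this matches the stated bound ``up to how one writes the exponent''; it does not, since $(\sqrt d)^8=d^4$, so item 2 as stated is $\widetilde O(d^{-7/2})$. Your figure is what the paper's edge-factor scheme (\Cref{lem:mixed-edge-factor-scheme}: each of the four copies of an $h_4$ edge charged $\widetilde O(1/\sqrt d)$ per appearance) actually delivers, and it is amply sufficient for the final conclusion $\|M_D\|_{\op}\leq \widetilde O(1/\sqrt d)$, which is all that is used downstream; but strictly you have not established item 2 at the claimed $d^{-7/2}$ rate, and getting it would require a different per-block accounting of the $h_4$ edge (e.g.\ charging its full two-appearance value $\E[h_4(x)^2]\approx 24/d^4$ to a single block), which the per-block definition of $B_q$ in \Cref{def:block-value-function} does not license. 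So either prove the weaker item 2 and note it suffices, or address this accounting explicitly.
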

\begin{proof}
	We observe the following,\begin{enumerate}
		\item The square vertex is in $U\cap V$ and hence bound to be making a middle appearance (ignoring the first and the last block of the walk as they are offset by long-path);
		\item Each circle vertex is either making a first/middle/last appearance, and note that instead of keeping track of $\pur$ factor, it suffices for us to assign it a cost of at most $\sqrt{n} \cdot (q\cdot D_V)$;
		\item For $M_{D,1}$, each edge gets assigned a factor of at most  $\frac{q}{d}$, and we have at most two circle vertices outside making first/middle/last appearances, which give a bound of $O(\frac{q^2}{d^2} \cdot \sqrt{d^2} (q\cdot D_V)^2)$.
		\item $M_{D,3}$ is identical to $M_{D,1}$ except there is only one circle vertex outside, which gives a bound of $O(\frac{q}{d}\cdot \sqrt{d}\cdot q\cdot D_V  ) $.
		\item For $M_{D,2}$, each edge gets assigned a factor of at most  $\frac{q}{\sqrt{d}^8}$, and we have one single circle vertex outside, which combines to a bound of $O(\frac{q}{\sqrt{d}^8 } \cdot \sqrt{d} \cdot (q\cdot D_V))$.
	\end{enumerate}
	Finally, recalling that we set $q^{O(1)} \cdot D_V \ll  \sqrt{d} $ completes the proof to our lemma.
\end{proof}


\section{Sketch of an Alternative Analysis}
\begin{definition}
For each chain, we define local $F$, $R$, $S$, and $H$ edges as follows. Here we traverse the chain from top to bottom.
\begin{enumerate}
\item We say that an edge is a local $F$ edge if it is appearing for the first time in the current chain and its destination is appearing for the first time in the current chain.
\item We say that an edge is a local $R$ edge if it is appearing for the last time in the current chain.
\item We say that an edge is a local $H$ edge if it appears again both before and afterwards in the current chain.
\item We say that an edge is a local $S$ edge if it is appearing for the first time but its destination has appeared before in the current chain.
\end{enumerate}
For our local analysis, there are two circle vertices which we may need to be careful about.
\begin{definition}
Let $v_{left}$ be the circle vertex which is at the top left, let $v_{right}$ be the circle vertex which is at the top right (which may be equal to $v_{left}$), and let $v_{bottom}$ be the circle vertex which is at the bottom.
\end{definition}
We can always take the minimum weight vertex separator to be $v_{left}$, so we do not need to worry about $v_{left}$. However, as we will see, we have to be careful about $v_{right}$ and $v_{bottom}$.
\begin{definition}
We say that an edge is locally vanishing if we have that after the local intersections, the product of it and the edges parallel to it has a nonzero constant term.
\end{definition}
\begin{example}
Some examples of locally vanishing edges are as follows.
\begin{enumerate}
\item Two parallel edges with label $1$ are locally vanishing as $x^2 = \sqrt{2}\frac{x^2 - 1}{\sqrt{2}} + 1$
\item Two parallel edges with label $2$ are locally vanishing as 
\[
\left(\frac{x^2 - 1}{\sqrt{2}}\right)^2 = \sqrt{6}\left(\frac{x^4 - 6x^2 + 3}{\sqrt{24}}\right) + 2\sqrt{2}\left(\frac{x^2 - 1}{\sqrt{2}}\right) + 1
\]
\item More generally, two parallel edges are locally vanishing if they have the same label $k$ and are not locally vanishing if they have different labels.
\item Three parallel edges with labels $1$, $1$, and $2$ are locally vanishing as 
\[
x^2\left(\frac{x^2 - 1}{\sqrt{2}}\right) = 2\sqrt{3}\frac{x^4 - 6x^2 + 3}{\sqrt{24}} + 5\frac{x^2 - 1}{\sqrt{2}}+\sqrt{2}
\]
\end{enumerate}
\end{example}
\begin{definition}
We say that a vertex $v$ is locally isolated if $v$ is not equal to the top left circle or the top right circle and all edges incident to $v$ are locally vanishing.
\end{definition}
\end{definition}
For all edges except for two special edges $e^{*}_r$ and $e^{*}_b$ which we describe below, we assign them as follows.
\begin{enumerate}
\item For each local $F$ edge, we assign it to its destination. For the edge at the top between $v_{right}$ and its neighboring square vertex, we consider its destination to be the square vertex.
\item For each local $R$ edge, we assign it to its origin unless it is a dangling edge with label $2$. If it is a dangling edge with label $2$, we split it between its endpoints.
\item For each local $S$ and $H$ edge, we assign half of it as a bonus to its origin and half of it as a bonus to its destination.
\end{enumerate}
\begin{lemma}
All vertices except $v_{right}$ and $v_{bottom}$ have the required edge factors. 
\end{lemma}
\begin{proof}
For each circle vertex $u$ except for $v_{left}$ (which we doesn't need any edges), $v_{right}$, and $v_{bottom}$, consider the first and last time $u$ appears in the current chain. The first time $u$ appears, there must be a local $F$ edge pointing to it which gives $u$ one edge. If $u$ only appears once, it cannot be locally isolated, so it only needs one edge. Otherwise, the last time $u$ appears, it can only be locally isolated if all edges incident to it are $R$ edges. In this case, $u$ obtains a second edge.

Similarly, for each square vertex $v$, consider the first and last time $v$ appears in the current chain. The first time $v$ appears, there must be two local $F$ edges with label $1$ or one local $F$ edge with label $2$ pointing to it. The last time $v$ appears, it can only be locally isolated if all edges incident to it are $R$ edges, in which case $v$ obtains two additional edges.
\end{proof}
\begin{remark}
Note that $v_{right}$ is an exception because the first time it appears, it does not have a local $F$ edge pointing to it. Similarly, $v_{bottom}$ is an exception because the last time it appears, it may be incident to an $R$ edge with label $2$ without gaining an additional edge.
\end{remark}
\begin{definition}
When $v_{left} \neq v_{right}$ and $v_{right}$ is not equal to the final circle vertex, we find/define $e^{*}_r$ through the following iterative process. We start with the edge $e$ between $v_{right}$ and its neighbor. Note that this is a local $F$ edge going down from a vertex $v = v_{right}$ which is not locally isolated and which is not the final circle vertex. We now have the following cases
\begin{enumerate}
\item $e$ does not appear again and the other endpoint of $e$ is the final circle vertex. In this case, we take $e^{*}_{r} = e$ and assign one edge factor from it to $v_{right}$. If it has label $2$, we keep one edge factor in reserve in case $e^{*}_{b} = e^{*}_{r} = e$.
\item $e$ does not appear again and its other endpoint is not the final circle vertex. In this case, letting $v'$ be the other endpoint of $e$, we take $e'$ to be an edge going down from $v'$. If $e'$ is not a local $F$ edge, we take $e^{*}_{r} = e'$ and assign it to $v_{right}$. We can do this because $v'$ is not locally isolated so it already has all of the edge factors it needs. If $e'$ is a local $F$ edge, we again have a local $F$ edge $e'$ going down from a vertex which is not locally isolated and is not the final circle vertex, so we repeat this process.
\item $e$ appears again. In this case, let $e'$ be the next time $e$ appears. If $e'$ is not a local $R$ edge whose destination is $v$, we take $e^{*}_{r} = e'$ and assign it to $v_{right}$. If $e'$ is a local $R$ edge whose destination is  $v$, we let $e''$ be an edge going down from $v$.

If $e''$ is not a local $F$ edge then we take $e^{*}_{r} = e''$ and assign it to $v_{right}$. If $e''$ is a local $F$ edge then we are still in the situation where we have a local $F$ edge going down from a vertex which is not locally isolated and is not equal to the final circle vertex, so we repeat this process.
\end{enumerate}
\end{definition}
\begin{definition}
We find/define $e^{*}_b$ through the following iterative procedure. We start with the edge $e$ with label $2$ between $v_{bottom}$ and its neighbor. Note that $e$ must be a local $R$ edge. We then do the following.
\begin{enumerate}
\item If $e$ is not a locally vanishing edge then we take $e^{*}_b = e$. We assign half of $e^{*}_b = e$ to $v_{bottom}$ and keep half in reserve in case $e^{*}_b = e^{*}_r$.
\item If $e$ is a locally vanishing edge then $e$ must appear above. Consider the previous time $e$ appears and let $e'$ be this copy of $e$. There are a few possibilities for $e'$.
\begin{enumerate}
\item $e'$ is an edge with label $2$ going from a copy of $v_{bottom}$ to the bottom square vertex of the current block which is not equal to the top vertex of the current block. If $e'$ is not a local $F$ edge then we take $e^{*}_b = e'$. We assign half of $e^{*}_b = e$ to $v_{bottom}$ and keep half in reserve in case $e^{*}_b = e^{*}_r$. If $e'$ is a local $F$ edge, let $e''$ be the edge from the top square vertex of the current block to the copy of $v_{bottom}$. If $e''$ is not a local $R$ edge then we take $e^{*}_b = e''$. We assign half of $e^{*}_b = e''$ to $v_{bottom}$ and keep half in reserve in case $e^{*}_b = e^{*}_r$. If $e''$ is a local $R$ edge then we are in the same situation as before so we repeat this process.
\item $e'$ is an edge with label $2$ from the top square vertex in the current block which does not equal the bottom square vertex of the current block to a copy of $v_{bottom}$. In this case, we take $e^{*}_b = e'$. If $e^{*}_b = e'$ is not a local $F$ edge then we assign half of it to $v_{bottom}$ and keep half in reserve in case $e^{*}_b = e^{*}_r$. If $e^{*}_b = e'$ is a local $F$ edge then we assign it to $v_{bottom}$. Note that in this case, we cannot have that $e^{*}_b = e^{*}_r$. The reason for this is that $e^{*}_r$ can only be a local $F$ edge in case $1$, in which case it does not appear again (while $e'$ appears again by definition).
\item $e'$ is an edge with label $2$ which hangs off of a square vertex which is both the top and bottom square vertex for the current block. In this case, we take $e^{*} = e'$ and assign all of it to $v_{bottom}$. Note that in this case, we cannot have that $e^{*}_b = e^{*}_r$.
\item $e'$ is a local $H$ edge with label $1$. In this case, we take $e^{*}_b = e'$ and assign it to $v_{bottom}$. Here we may have that $e^{*}_b = e^{*}_r$ but if we do, one of the endpoints of $e^{*}_b = e'$ is not locally isolated so we can take an edge factor from it and give the edge factor to $v_{right}$.
\end{enumerate}
\end{enumerate}
\end{definition}

\end{document}